

\documentclass[preprint,12pt]{elsarticle}
\usepackage{color}
\usepackage{amsmath}
\usepackage{amsthm}
\usepackage{amsfonts}
\usepackage{amssymb}
\usepackage{graphicx, enumitem}
\usepackage{hyperref}
\usepackage{tabularx}

\def \N{\mathbb{N}}

\def \E{\mathbb{E}}
\def \R{\mathbb{R}}

\def \G{\mathcal{G}}
\def \A{\mathcal{A}}
\def \M{\mathcal{M}}
\def \eps{\varepsilon}
\def \oA{\overline{A}}
\def \G{\mathcal G}

\def \int{{\rm int\,}}

\def \B{\mathcal B}
\def \C{\mathcal C}

\def \U{\mathcal U}
\def \M{\mathcal M}
\def \G{\mathcal G}

\def \int{{\rm int\,}}

\def \B{\mathcal B}

\def \U{\mathcal U}
\def \M{\mathcal M}

\def \sgn {{\rm sgn}}

\providecommand{\U}[1]{\protect \rule{.1in}{.1in}}
\newtheorem{theorem}{Theorem}[section]

\newtheorem{convention}[theorem]{Convention}
\newtheorem{corollary}[theorem]{Corollary}

\newtheorem{definition}[theorem]{Definition}
\newtheorem{example}[theorem]{Example}

\newtheorem{lemma}[theorem]{Lemma}
\newtheorem{notation}[theorem]{Notation}

\newtheorem{proposition}[theorem]{Proposition}
\newtheorem{remark}[theorem]{Remark}

%





\journal{Indagationes Mathematicae}

\begin{document}

\begin{frontmatter}



\title{The explicit formula for Gauss-Jordan elimination and error analysis\tnoteref{label1}}

\author[label2]{Nam Van Tran}
\ead{namtv@hcmute.edu.vn}
\fntext[label1]{Faculty of Applied Sciences, HCMC University of Technology and Education, Ho Chi Minh city, Vietnam}

\author[label3,label4]{J\'ulia Justino}
\ead{julia.justino@estsetubal.ips.pt}
\fntext[label3]{Set\'ubal School of Technology,
	Polytechnic Institute of Set\'ubal,	Set\'ubal, Portugal}
\author{Imme van den Berg\fnref{label4}\corref{cor1}}\cortext[cor1]{Imme van den Berg}
\ead{ivdb@uevora.pt}
\fntext[label4]{Research Center in Mathematics and Applications (CIMA), University of \'Evora, \'Evora, Portugal}
\begin{abstract}

The explicit formula for the elements of the successive intermediate matrices of the Gauss-Jordan elimination procedure for the solution of systems of linear equations is applied to error analysis. Stability conditions in terms of relative uncertainty and size of determinants are given such that the Gauss-Jordan procedure leads to a solution respecting the original imprecisions in the right-hand member. The solution is the same as given by Cramer's Rule. Imprecisions are modelled by scalar neutrices, which are convex groups of (nonstandard) real numbers. The resulting calculation rules extend informal error calculus, and permit to keep track of the errors at every stage.

\end{abstract}


\begin{keyword}
Gauss-Jordan elimination, error propagation, stability, scalar neutrices.
	
	AMS classification: 03H05, 15A06, 15B33, 65G99.



\end{keyword}

\end{frontmatter}



\section{Introduction}

In the present article we study imprecise systems of linear equations. The imprecisions occurring in the coefficient matrix and the right-hand member of a system of linear equations are modelled by convex subgroups of the nonstandard reals,  called \emph{(scalar) neutrices}. The vagueness is reflected by the invariance under some additions, a formalization of the Sorites property \cite{sorites, sorites2}; we were inspired by the functional neutrices of Van der Corput's Theory of Neglecting \cite{Van der Corput}. The setting within the real number system enables individual treatment of the imprecisions and a straightforward calculus modelling error-propagation.

Stability conditions for  systems of linear equations are formulated. In particular, the relative imprecisions of elements of the coefficient matrix $ \A $, when compared to $ \det(A) $, should be at most of same  order as the relative imprecision of the right-hand member, and $ \det(A) $ should also be not too small. We derive that  each Gauss-Jordan operation transforms a stable system into a stable system and that there is no significant blow-up of the imprecisions. The Main Theorem (Theorem \ref{maintheorem}) states that the Gauss-Jordan procedure solves a stable system within the bounds given by the imprecisions in the right-hand member, leading to the same outcome as  Cramer's Rule.

Within nonstandard analysis a neutrix is usually an external set. {\em External numbers} are sums of a real number and a neutrix. They give rise to the algebraic structure of a \emph{Complete Arithmetical Solid} \cite{DinisBergax}. This structure is weaker than a field, being based on additive and multiplicative semigroups, with a distributive law which is valid under some conditions \cite{Dinischar}. Still the structure is completely ordered, with a Dedekind completeness property and an Archimedean property, and is sufficiently strong to enable rather straightforward algebraic calculations \cite{Koudjeti Van den Berg}\cite{DinisBerg}, while common techniques and operations of linear algebra and matrix calculus remain valid to a large extend \cite{Jus} \cite{NamImme} \cite{Imme nam 3}. This may be observed  also in the present article.

It follows from the above that the scalar neutrices allow for a stronger algebraic structure than Van der Corput's neutrices, which is partly due to the absence of functional dependence. The last section of the present article contains a result which may be seen to fall within Van der Corput's program of Ars Negligendi: when we recognize a system as being stable, we may as well solve a simpler system, neglecting all terms in the coefficient matrix contained in its biggest neutrix.

There is an extensive literature on error analysis for the Gaussian and Gauss-Jordan elimination procedure, see e.g. \cite{Wilkinson}, \cite{Peter}, \cite{Ikramov} and \cite{Scott}, \cite{George}, which contain many more references.
Often the approach is asymptotic, as a function of the number of variables $ m $. Some key-notions are the \emph{growth factor} 
$\rho \equiv \dfrac{\max\limits_{i,j,k} |a^{(k)}_{i,j}|}{\max\limits_{i,j}|a_{i,j} |}$, where $ k\leq m $ and $ [a^{(k)}_{i,j}] $ is the $ k $-th intermediate matrix, and the \emph{condition number} in the form of the product of norms $\rm{cond}(A)\equiv\|A\|.\|A^{-1}\|$.

Here we choose a non-asymptotic approach taking $ m $ standard. The principal tools in our setting are explicit formulas for the elements of the transition matrices \cite{Li},\cite{NJI1} and estimates of determinants and its principal minors; this  seems somewhat natural, since by Cramer's Rule the solution of linear systems is stated in the form of quotients of determinants,  and the Gauss-Jordan operations are carried out with quotients of minors; we point out that there exists a relationship between the orders of magnitude of determinants and its principal minor, see Subsection \ref{excalculus} .

This article has the following structure. Section \ref{sectionGJ} recalls some basic properties of nonstandard analysis, and of neutrices and external numbers. Also some notions and notations are given for the Gauss-Jordan operations, matrices with external numbers and systems of linear equations with external numbers (flexible systems).  We define the notion of stability, and formulate two principal theorems, the first stating that stability is respected by the Gauss-Jordan operations, and the second indicating the solution sets of flexible systems. Section \ref{examples}  presents examples illustrating the principal theorems and the role of their conditions. Section \ref{preliminary} recalls useful properties of the calculus of external numbers and the explicit expressions for the elements of the transition matrices.
In Section \ref{lemmas} the impact of each step of the Gauss-Jordan procedure on the size of the neutrices is shown. These results and the generalization of Cramer's Rule proved in Section \ref{Cramer} allow us to prove the Main Theorem  in Section \ref{proofs}.  In Section \ref{neglection of terms} we define equivalent systems, having the same solutions, and show in the case of stability a given system may be substituted by a simpler equivalent system; this is illustrated numerically.

\section{Backgrounds and main theorems}\label{sectionGJ} 
We start with some background on Nonstandard Analysis in  Subsection \ref{nonstandard}. In Subsection \ref{neutrices} we recall the notions of neutrix and of external number used to model the imprecisions. In Subsection \ref{GaussJordan} we introduce some notions and notations with respect to the Gauss-Jordan operations, which we will effectuate in the form of matrix multiplications. Subsection \ref{matrices} contains notions and notations with respect to matrices and matrix operations. In Subsection \ref{flexible systems} we recall the definition of  flexible systems of linear equations, with a slight modification, and introduce a notion of stability. In  Subsection \ref{mainresults} we state the main theorems, the first saying that the Gauss-Jordan elimination procedure transforms a stable system into a stable system, and the second saying that stable systems maybe solved both by Cramer's rule and Gauss-Jordan elimination, leading to equal solutions. 

\subsection{Nonstandard Analysis}\label{nonstandard}

We adopt the axiomatic form of nonstandard analysis Internal Set Theory $IST$ of \cite{Nelson}; an important feature is that, next to the standard numbers, infinitesimals and infinitely large numbers are already present within the ordinary set of real numbers $\mathbb{R} $. We use only bounded formulas, and then neutrices and external numbers are well-defined external sets in the  extension $HST$ of a bounded form of $ IST $ given by Kanovei and Reeken in \cite{Kanovei}. For introductions to $ IST $ we refer to e.g. \cite{Dienerreeb}, \cite{Dienernsaip} or \cite{Lyantsekudryk} and for introductions to external numbers and illustrative examples we refer to \cite{Koudjeti Van den Berg}, \cite{DinisBergax} or \cite{DinisBerg}; the latter contains an introduction to a weak form of nonstandard analysis sufficient for a practical understanding of our approach. An important tool is \emph{External induction} which permits induction for all $ IST $-formulas over the standard natural numbers. 

A real number is \emph{limited} if it is bounded in absolute value by a standard natural number, and real numbers larger in absolute value than all limited numbers are called \emph{unlimited}. Its reciprocals, together with $ 0 $, are called \emph{infinitesimal}. \emph{Appreciable} numbers are limited, but not infinitesimal. The set of limited numbers is denoted by $\pounds$, the set of infinitesimals by $\oslash$, the set of positive unlimited numbers by $ \not\hskip -0.22cm \infty $ and the set of positive appreciable numbers by $ @ $; these sets are all external. 

\subsection{Neutrices and external numbers}\label{neutrices}



\begin{remark}
	Throughout this article we use the symbol $ \subseteq $ for inclusion and $ \subset $ for strict inclusion.
\end{remark}
\begin{definition}A \emph{(scalar) neutrix} is an additive convex subgroup of $\R$. An {\em external number}  is the Minkowski-sum of a real number and a neutrix. 
\end{definition}

So each external number  has the form $\alpha=a+A=\{a+x|x\in A\}$, where $A$ is called the {\em neutrix part} of $\alpha$, denoted by $N(\alpha)$, and $a\in \R$ is called a {\em representative}  of $\alpha$. 

In classical analysis the only neutrices are $\{0\}$ and $\R$, but in Nonstandard Analysis there are many more neutrices, all external sets. Examples are $\oslash$ and $\pounds$, for the sum of two infinitesimals is infinitesimal, and the sum of two limited numbers is limited. Let $\varepsilon\in \R$ be a positive infinitesimal. Other examples of neutrices are $\varepsilon\pounds$, $\varepsilon \oslash $,  $M_{\varepsilon}\equiv\displaystyle\bigcap_{st(n)\in \N}[-\varepsilon^n, \varepsilon^n]=\pounds\varepsilon^{\not\infty}$ and $\mu_{\varepsilon}\equiv\displaystyle\bigcup_{st(n)\in \N}[-e^{-1/(n\eps)}, e^{-1/(n\eps)}]=\pounds e^{-@/\eps}$; as groups they are not isomorphic. 
Identifying $ \{a\} $ and $ a $, the real numbers are external numbers with $ N(\alpha)=\{0\} $. We call $\alpha$ {\em zeroless} if $0\not\in \alpha$,  and {\em neutricial} if $ \alpha=N(\alpha) $.

Let $ N $ be a neutrix. Clearly $ \pounds N =N$. An \emph{absorber} of $N $ is a real number $a $
such that $aN\subset N$. No appreciable number is an absorber of any neutrix, and in the examples above the infinitesimal number $%
\varepsilon $ is an absorber of $\pounds$ and $\oslash $, but not of   $M_{\varepsilon} $ and $ \mu_{\varepsilon} $. Neutrices are ordered by inclusion, and if the neutrix $ A$ is contained in the neutrix $ B$, we may write $ B=\max\{A,B\} $.

Notions as \emph{limited}, \emph{infinitesimal} and \emph{absorber} may be extended in a natural way to external numbers.

The collection of all neutrices is not an external set in the sense of \cite{Kanovei}, but a definable class, denoted by $\mathcal{N}$. Also the external  numbers form a class, denoted by $ \E $. 

The rules for addition, subtraction, multiplication and division of external numbers of Definition \ref{defnam} below are in line with the rules of informal error analysis \cite{Taylor}. Here they are defined formally as  Minkowski operations on sets of real numbers.

\begin{definition}\label{defnam}\rm Let $ a,b\in \R $, $A, B$ be neutrices and $\alpha=a+A, \beta=b+B$ be external numbers.
	\begin{enumerate}
		\item \label{cong} $\alpha\pm\beta=a\pm b+A+B= a+b+\max\{A,B\}$.		
		\item \label{nhan}  $\alpha \beta=ab+Ab+Ba+AB=ab+\max\{aB, bA, AB\}.$
		\item \label{1/alpha} If $\alpha$ is zeroless, $\dfrac{1}{\alpha}=\dfrac{1}{a}+\dfrac{A}{a^2}.$		
	\end{enumerate}
\end{definition}

If $\alpha$ or $ \beta$ are zeroless, in Definition \ref{defnam}.\ref{nhan} we may neglect the neutrix product $ AB $. Definition \ref{defnam}.\ref{1/alpha} does not permit to divide by neutrices. However,  division of neutrices is allowed in terms of division of groups.

\begin{definition}
	Let $A, B\in \mathcal{N}$. Then we define  
	$$ A:B=\{c\in \R\ |\ cB\subseteq A\}.$$
\end{definition}

An order relation is given as follows.

\begin{definition} Let $ \alpha,\beta \in \E$. We define 
	\begin{equation*}
		\alpha\leq\beta\Leftrightarrow\forall a\in \alpha \exists b\in \beta(a\leq b).
	\end{equation*}
	If $ \alpha \cap \beta =\emptyset $ and $ \alpha\leq\beta $, then $ \forall a\in \alpha \forall b\in \beta(a<b) $ and we write $ \alpha<\beta $.
\end{definition}

The relation $ \leq  $ is an order relation indeed, and compatible with the operations, with some small adaptations \cite{Koudjeti Van den Berg}\cite{DinisBerg}. The inverse order relation is given by
\begin{equation*}
	\alpha\geq\beta\Leftrightarrow\forall a\in \alpha \exists b\in \beta(a\geq b),
\end{equation*}
and $ \alpha>\beta $ if $ \forall a\in \alpha \forall b\in \beta(a>b) $.
Clearly $ \alpha<\beta $ implies $ \beta>\alpha $. However, both $ \oslash\leq \pounds $ and $ \oslash\geq \pounds $ hold. External numbers $ \alpha $ such that $ 0\leq \alpha $ are called \emph{non-negative}.

The \emph{absolute value} of an external number $\alpha=a+A$ is defined by $|\alpha|=|a|+A$. Notice that this definition does not depend on the choice of the representative of $\alpha.$

By the close relation to the real numbers, practical calculations with external numbers tend to be quite straightforward, this may be verified on the examples of Section \ref{examples}. Some care is needed with distributivity, see Subsection \ref{excalculus}.
A full list of axioms for the operations on the external numbers has been given in  \cite{DinisBergax} and \cite{DinisBerg}. The resulting structure has been called a \emph{Completely Arithmetical Solid $(CAS)$}. A Completely Arithmetical Solid relates a completely regular commutative additive semigroup and a completely regular commutative multiplicative semigroup by the modified distributive law of Theorem \ref{tcopti2}, it has a total order relation with a generalized Dedekind property and contains two built-in models for the natural numbers. The results of the present article use only calculatory properties of nonstandard analysis, and could also have been presented in the setting of a $CAS$, and then the neutrices, external numbers and the $CAS$ itself are true sets.

\subsection{Gauss-Jordan operations \label{GaussJordan}}

The Gauss-Jordan operations will be effectuated by multiplications by elementary matrices. These matrices will have real coefficients. This reflects the common practice to use relatively simple numbers for these operations, and in this way more algebraic laws are respected. Below we give notations for the Gauss-Jordan procedure and the intermediate matrices.

We consider here only square matrice and denote by $\M_{ n}(\mathbb{R})$ the set of all $n\times n$
matrices over the field $\mathbb{R}$, with $ n\in \N, n\geq 1 $.

\begin{definition}\label{defgau}
	Let $\A=[a_{ij}]_{n\times n}\in \M_{n}(\R)$  be non-singular. For every $ q $ with $1\leq q\leq 2n$, the \emph{Gauss-Jordan operation matrix} $\mathcal{G}_{q}$ and the intermediate matrix $ \A^{(q)} $ are defined as follows. 
	
Let $\mathcal{G}_{0}$ be the $ n\times n $ identity matrix $I_{n}$ and $\A^{(0)}=\A$.
	Assuming that $ \mathcal{G}_{2k}$ and $\A^{(2k)}=[a^{(2k)}_{ij}]_{n\times n} $ are defined for $ k<n $, we also assume that $ a^{(2k)}_{k+1k+1} \neq 0$. Then $\mathcal{G}_{2k+1}=%
	\begin{bmatrix}
		g_{ij}^{(2k+1)}%
	\end{bmatrix}%
	_{n\times n}$, where  
	\begin{equation}\label{defgodd}
		g_{ij}^{(2k+1)}=%
		\begin{cases}
			\quad 1 & \mbox{ if }i=j\not=k+1 \\ 
			\quad 0 & \mbox{ if }i\not=j \\ 
			\dfrac{1}{a^{(2k)}_{k+1k+1}} & \mbox{ if }i=j=k+1%
		\end{cases},	
	\end{equation}
	leading to 
	\begin{equation}\label{defakodd}
		\A^{(2k+1)}=\mathcal{G}_{2k+1}\A^{(2k)}\equiv[a^{(2k+1)}_{ij}]_{n\times n}, 	 
	\end{equation}
	and 	$	\mathcal{G}_{2k+2}=%
	\begin{bmatrix}
		g_{ij}^{(2k+2)}%
	\end{bmatrix}%
	_{n\times n}$, where 
	
	\begin{equation}\label{defgeven}	g_{ij}^{(2k+2)}=%
		\begin{cases}
			0 & \mbox{ if }j\not \in \{i,k+1\} \\ 
			1 & \mbox{ if }j=i \\ 
			-a^{(2k+1)}_{ik+1}
			\medskip  &  \mbox{ if }i\neq k+1,j=k+1%
		\end{cases},	
	\end{equation}
	resulting in 
	\begin{equation}\label{defakeven}
		\A^{(2k+2)}=\mathcal{G}_{2k+2}\A^{(2k+1)}\equiv[\alpha^{(2k+2)}_{ij}]_{n\times n}. 	 
	\end{equation}
\end{definition}	
The matrix of odd order $ \mathcal{G}_{2k+1} $ corresponds to the multiplication of row $ k+1$ of $ \A^{(2k)} $ by $1/ a^{(2k)}_{k+1k+1} $,  and the matrix of even order $ \mathcal{G}_{2k+2} $ corresponds to transforming the entries of column $ k $ of $ \A^{(2k+1)} $ into zero, except for the entry $ a^{(2k+1)}_{k+1k+1}(=1) $.

Up to changing rows and columns we may always assume that the pivots $ a^{(2k)}_{k+1k+1} $ are non-zero. In fact they may always be chosen to be maximal, which is numerically desirable. The properties in question are a consequence of the next definition and propositions. We introduce first a notation for minors, taken from \cite{Gantmacher}.
\begin{notation}
	\label{kyhieu2}Let $\A\in \M_{n}(\mathbb{R})$. For each $k\in \N $ such that $1\leq k\leq n$, let $1\leq i_{1}<\dots <i_{k}\leq m$ and $1\leq
	j_{1}<\dots <j_{k}\leq n$. 
	
	\begin{enumerate}
		\item We denote the $k\times k$ submatrix  of $\A$ consisting of the rows with indices $\{i_{1},\dots ,i_{k}\}$ and columns with indices $\{j_{1},\dots ,j_{k}\}$ by $\A^{i_{1}\dots i_{k}}_{j_{1}\dots j_{k}}$. 
		
		\item We denote the corresponding $k\times k$ minor by 		
		\begin{equation}\label{defminor}
			m^{i_{1}\dots i_{k}}_{j_{1}\dots j_{k}}=\det\left( \A^{i_{1}\dots i_{k}}_{j_{1}\dots j_{k}}\right). \end{equation}
		
		\item \textrm{For $ 1\leq k\leq \min\{m,n\}$ we may denote the principal minor of order $ k $ by $m_{k}=m_{1\cdots k}^{1\cdots k}$. We define formally $m_{0}=1$}.
	\end{enumerate}
\end{notation}

\begin{definition}
	\label{matrixofgaussJordanelimination} Assume $\A\in \M_{n}(\mathbb{R}%
	) $. Then $\A$ is called \emph{properly arranged}, if $|a_{ij}| \leq |a_{11}| $ for $1\leq i\leq n$ and $1\leq j\leq n$ and $\left \vert m_{1\cdots k j}^{1\cdots ki}\right \vert \leq \left \vert m_{k+1}\right \vert$ for every $ k $ such that $1\leq k\leq n-1$, whenever $k+1\leq i\leq n$ and $k+1\leq j\leq n$. We say that $ \A $ is \emph{diagonally eliminable}, if $ m_{k}\not=0 $ for $1\leq k\leq n$.
\end{definition}

\begin{proposition}\label{propar}	
	Let $ n\geq 1 $. Let $\A=[a_{ij}]_{n\times n}\in \M_n(\R)$. By if necessary changing rows and columns $\A$ can be properly arranged. 
\end{proposition}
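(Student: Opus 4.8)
The plan is to realize the two requirements of ``properly arranged'' as a single family of conditions on bordered minors, and to satisfy them by a greedy, complete-pivoting--style choice of the row and column permutations. Observe first that condition~1 is exactly the case $k=0$ of condition~2 under the convention $m_{0}=1$: since the $1\times 1$ minor $m_{j}^{i}$ equals $a_{ij}$ and $m_{1}=a_{11}$, the inequality $|a_{ij}|\le |a_{11}|$ reads $|m_{j}^{i}|\le |m_{1}|$. Thus it suffices to produce row and column permutations so that, for every $k$ with $0\le k\le n-1$,
\[
\left| m_{1\cdots k\,j}^{1\cdots k\,i}\right|\le |m_{k+1}|\qquad\text{for all }i,j\text{ with }k+1\le i,j\le n,
\]
all minors being those of the permuted matrix.

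I would construct the permutations inductively, fixing one new row index and one new column index at each stage $k=0,1,\dots,n-1$. Suppose the images of the first $k$ rows and of the first $k$ columns have already been chosen (for $k=0$ nothing is chosen yet). Among all pairs $(i,j)$ consisting of a not-yet-used row $i$ and a not-yet-used column $j$, select a pair $(i^{\ast},j^{\ast})$ maximizing the absolute value of the bordered determinant $\left| m_{1\cdots k\,j}^{1\cdots k\,i}\right|$, that is, the determinant of the submatrix on rows $1,\dots,k,i$ and columns $1,\dots,k,j$. Send row $i^{\ast}$ to position $k+1$ and column $j^{\ast}$ to position $k+1$. Then the principal minor of order $k+1$ of the permuted matrix equals the bordered minor for the pair $(i^{\ast},j^{\ast})$, so $|m_{k+1}|\ge \left| m_{1\cdots k\,j}^{1\cdots k\,i}\right|$ for every not-yet-used $i,j$, which is precisely the desired inequality at level $k$.

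The point that needs care --- and the only real obstacle --- is consistency across stages: I must check that fixing the pivot at stage $k$ neither disturbs the inequalities already secured at stages $k'<k$ nor omits any border index that is assigned after stage $k$. This is where the key structural fact enters: the bordered minor $m_{1\cdots k'\,j}^{1\cdots k'\,i}$ depends only on the first $k'$ rows and columns together with the single row $i$ and column $j$, and is completely insensitive to how the remaining indices are later distributed among positions $>k'+1$. Hence the maximum computed at stage $k'$, taken over all then-unused rows and columns, already ranges over every index that will occupy a position $>k'$ in the final ordering; the inequality $|m_{k'+1}|\ge \left| m_{1\cdots k'\,j}^{1\cdots k'\,i}\right|$ therefore persists for all $i,j>k'$ once the construction is complete, regardless of the subsequent choices. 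Since each stage only permutes indices lying in positions $>k$, the earlier principal submatrices, and thus the earlier minors $m_{k'+1}$, are untouched. This settles both concerns and completes the construction.

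I would remark that this argument is complete pivoting phrased through minors: by the Schur-complement identity underlying the explicit formulas recalled in Section~\ref{preliminary}, the entry $a_{ij}^{(2k)}$ produced after $k$ elimination steps equals $m_{1\cdots k\,j}^{1\cdots k\,i}/m_{k}$, so maximizing the bordered minor is the same as choosing the largest available pivot $|a_{ij}^{(2k)}|$. One could therefore alternatively invoke those formulas directly; the purely determinantal formulation above is, however, self-contained and avoids assuming $m_{k}\neq 0$, which is not part of the hypothesis here --- indeed the inequalities hold trivially ($0\le 0$) whenever an entire block of borderings vanishes.
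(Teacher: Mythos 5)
Your proof is correct. Note that the paper itself supplies no argument here---it simply declares the proof of Proposition \ref{propar} straightforward---and what you give is precisely the standard argument the authors evidently intend: a greedy, complete-pivoting choice of permutations in which, at stage $k$, one maximizes the bordered minor $\left|m^{1\cdots k\,i}_{1\cdots k\,j}\right|$ over the as-yet-unused rows $i$ and columns $j$ and moves the maximizing pair into position $k+1$, so that this maximum becomes $|m_{k+1}|$. You also correctly isolate and settle the two points that genuinely need checking: the unification of the first condition as the $k=0$ case (via $m_{0}=1$), and the cross-stage consistency, which holds because a bordered minor at level $k'$ depends only on the leading $k'\times k'$ block together with one later row and one later column, none of which are disturbed by subsequent stages; your closing remark that no assumption $m_{k}\neq 0$ is needed is likewise apt, since unlike Proposition \ref{diagonally} this statement does not assume non-singularity.
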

The proof of this proposition is straightforward. If $ \A $ is non-singular, by Proposition \ref{diagonally} the matrix is automatically diagonally eliminable, so we assume without restriction of generality that this is always the case.

\begin{proposition}\label{diagonally}\cite{NJI1} 
	Let $ n\geq 1 $. Let $\A=[a_{ij}]_{n\times n}\in \M_n(\R)$ be  non-singular and properly arranged. Then $\A$ is diagonally eliminable.
\end{proposition}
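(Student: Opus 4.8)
The plan is to unravel the definition: $\A$ is diagonally eliminable precisely when all its leading principal minors $m_1,\dots,m_n$ are nonzero. Since $\A$ is non-singular we already have $m_n=\det(\A)\neq 0$, and by convention $m_0=1\neq 0$; the work is to rule out a vanishing $m_k$ for $1\le k\le n-1$. I would argue by contradiction: let $k$ be the \emph{least} index with $m_k=0$. Minimality together with $m_0=1$ guarantees $m_{k-1}\neq 0$, and $m_n\neq 0$ forces $1\le k\le n-1$.

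The first step is to use proper arrangement to spread the single vanishing minor $m_k=0$ to a whole block of bordering minors. If $k=1$, then $m_1=a_{11}=0$, and the condition $|a_{ij}|\le|a_{11}|$ immediately gives $\A=0$, contradicting non-singularity for $n\ge1$. So assume $k\ge 2$ and apply the proper-arrangement inequality with parameter $k-1$: for all $i,j$ with $k\le i,j\le n$,
$$\left|m_{1\cdots(k-1)j}^{1\cdots(k-1)i}\right|\le|m_k|=0,$$
hence every such one-row, one-column extension of the leading $(k-1)\times(k-1)$ block vanishes.

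The heart of the argument is then a bordering-rank lemma: if the leading minor $m_{k-1}\neq0$ while every extension $m_{1\cdots(k-1)j}^{1\cdots(k-1)i}$ with $k\le i,j\le n$ is zero, then $\rank(\A)=k-1$. I would prove this by a span argument. Because $m_{k-1}\neq0$, the first $k-1$ rows of $\A$ are linearly independent, and the first $k-1$ columns restricted to rows $1,\dots,k-1,i$ remain independent in $\R^{k}$ for each fixed $i\ge k$. For such $i$ and any $j\ge k$, the vanishing $k\times k$ minor says that these $k-1$ independent columns together with column $j$ (all restricted to rows $1,\dots,k-1,i$) are linearly dependent, so column $j$ lies in their span; consequently the $k\times n$ block on rows $1,\dots,k-1,i$ has column rank, hence row rank, equal to $k-1$, which forces row $i$ to be a linear combination of rows $1,\dots,k-1$. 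As this holds for every $i\ge k$, all rows of $\A$ lie in the span of its first $k-1$ rows, so $\rank(\A)\le k-1<n$. This contradicts non-singularity and completes the proof.

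I expect the main obstacle to be the bordering-rank lemma itself, namely correctly translating the vanishing of the bordered determinants into a rank statement through the column/row span argument, rather than the bookkeeping of indices. The secondary point requiring care is the base case $k=1$, which is not covered by the minor inequality (whose parameter runs only over $1\le k\le n-1$) and must instead be settled by the entrywise bound $|a_{ij}|\le|a_{11}|$.
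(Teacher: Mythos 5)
Your proof is correct, and it is worth noting that the paper itself contains no proof of this proposition: it is imported wholesale from \cite{NJI1}. In that source the statement is established inside the elimination formalism itself, essentially by induction on $k$: once $m_1,\dots,m_k\neq 0$, the explicit formula $a^{(2k)}_{ij}=m^{1\dots ki}_{1\dots kj}/m_k$ (Theorem \ref{cmcttruyhoi}) is available, and if $m_{k+1}=0$ the proper-arrangement bound $\bigl|m^{1\dots ki}_{1\dots kj}\bigr|\le|m_{k+1}|$ annihilates the whole trailing $(n-k)\times(n-k)$ block of $\A^{(2k)}$, so $\det\A^{(2k)}=0$ although $\A^{(2k)}$ is a product of invertible matrices with $\A$ --- a contradiction; the present paper replays exactly this pattern in the external-number setting in the proof of Theorem \ref{propmm}. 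Your route is genuinely different in that it bypasses the elimination machinery entirely: the shared key step (proper arrangement at level $k-1$ spreads the single vanishing minor $m_k=0$ over all bordered minors $m^{1\cdots(k-1)i}_{1\cdots(k-1)j}$, $k\le i,j\le n$) is followed not by a determinant computation but by the classical bordered-minor rank criterion, and your span argument for that criterion is sound: $m_{k-1}\neq0$ makes the first $k-1$ columns of the $k\times n$ block on rows $1,\dots,k-1,i$ independent, each vanishing bordered minor then puts column $j\ge k$ in their span, and column rank $=$ row rank forces row $i$ into the span of rows $1,\dots,k-1$, giving $\rank(\A)\le k-1<n$, contradicting non-singularity. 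The minimal-counterexample bookkeeping is right, and you correctly observed that $k=1$ escapes the minor inequality and must be settled separately by the entrywise bound $|a_{ij}|\le|a_{11}|$. What your argument buys is a self-contained, elementary proof usable independently of the explicit Gauss-Jordan formulas; what the cited source's argument buys is economy within this paper's framework, where those formulas have to be developed anyway.
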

\begin{definition}\label{procedure}Let $ n\geq 1 $. Let $\A=[a_{ij}]_{n\times n}\in \M_n(\R)$ be  non-singular and properly arranged. Then we call the sequence $ \A,\A^{(1)},\cdots,\A^{(2n)} $ the \emph{Gauss-Jordan procedure} and we write $ \G=\mathcal{G}_{2n}({G}_{2n-1}\cdots(\mathcal{G}_{2}\mathcal{G}_{1}))$.
\end{definition}
\subsection{Matrices with external numbers}\label{matrices}
We denote by  $\M_{m,n}(\E)$ the class of all $m\times n$ matrices
\begin{equation}\label{forextmat}
	\A=\begin{bmatrix}
		\alpha_{11} & \alpha_{12}& \cdots & \alpha_{1n}\\
		\vdots & \vdots&  \ddots & \vdots \\
		\alpha_{m1} & \alpha_{m2} & \cdots & \alpha_{mn}
	\end{bmatrix},
\end{equation}
where $\alpha_{ij}=a_{ij}+A_{ij}\in \E$ for $ 1\leq i\leq m, 1\leq  j\leq n $; we always suppose that $m,n\in \N, m,n\geq 1$ are standard. The matrix $ \A $ is called an \emph{external matrix} and we use the common notation $\A=[\alpha_{ij}]_{m\times n}$. 	A matrix $\mathcal{A}\in \M_{m,n}(\E)$ is said to be \emph{neutricial} if all of its entries are neutrices. With respect to  \eqref{forextmat} the matrix $P=[a_{ij}]_{m\times n}\in \M_{m, n}(\mathbb{R})$ is called a \emph{representative matrix} and the matrix $A=[A_{ij}]_{m\times n}$ the \emph{associated neutricial matrix}. If $ m=n $ we may write $\M_{n}(\E)$ instead of $\M_{m,n}(\E)$. A matrix $ \A\in \M_{n}(\E) $ with representative matrix equal to the identity matrix $ I_{n} $ and associated neutricial matrix contained in $ [\oslash]_{n\times n} $ is called a \emph{near-identity matrix}, and is denoted by $ \mathcal{I}_{n}$. 

For $ \A,\B\in \M_{m\times n}(\E)$ we write $\A\subseteq \B$ if $\alpha_{ij}\subseteq \beta_{ij}$ for all $ i,j $ such that $ 1\leq i\leq m, 1\leq j\leq n$.

\begin{notation}
	
	Let $\A=[\alpha_{ij}]_{n\times n}\equiv [a_{ij}+A_{ij}]_{n\times n}\in \M_{n}(\E)$. We define 
	$$  |\overline{\alpha}|=\max\limits_{\substack{1\leq i,j\leq n}} \left| \alpha_{ij}\right|,\overline{A}=\max\limits_{\substack{1\leq i,j\leq n}} A_{ij}.$$ 
\end{notation}

\begin{definition}\label{deflired}
	An external matrix $\mathcal{A}$ is said to be \emph{limited} if $ \overline{\alpha}\subset \pounds $ and \emph{reduced} if $ \overline{\alpha}=\alpha_{11} $ and $ \alpha_{11}=1+A_{11} $, with $ A_{11}\subseteq\oslash $, while all other entries have representatives which in absolute value are at most $1$. 
\end{definition}
By the second part of Definition \ref{deflired} a reduced external matrix always has  a reduced representative matrix.

For $ \mathcal{A}\in \M_{n}(\E) $, the determinant $%
\Delta \equiv \det(\mathcal{A}) \equiv d+D$ is defined in the usual way through sums of signed products \cite{Jus}.

\begin{definition}\label{defmatnonsin}
	Let $ \A\in \M_{n}(\E) $. Then $\mathcal{A}$ is called {\em non-singular} if $\Delta $ is zeroless.
\end{definition}
Observe that a representative matrix of a non-singular matrix $ \A $ is always non-singular. It is not true in general that $ \det(\mathcal{A}) $ is equal to the set of determinants of representative matrices. This is shown in
\cite{Imme nam 3}, which contains an overview of the calculus of matrices with external numbers and its determinants.

Let $ \A=[\alpha_{ij}]_{n \times n}  $ be an external matrix. The Gauss-Jordan operations will always be effectuated using the elements of a representative matrix $ P=[a_{ij}]_{n \times n} $. In particular the notions of properly arranged and diagonally eliminable are defined by reference to representative matrices. 

\begin{definition}\label{defpropar}
	Let $\mathcal{A}\in \M_{n}(\E)$ be reduced and non-singular. \begin{enumerate}
		\item \label{defpp1}We say that $ \A $ is \emph{properly arranged} if it has a properly arranged representative matrix $ P $. In this case we say that $ \A $ is \emph{properly arranged with respect to $ P $}.
		\item \label{ky hieu phep toan gauss} If $ \A $ has a diagonally eliminable representative matrix $P$, we say that $ \A $ is \emph{diagonally eliminable with respect to  $P$}.  
	\end{enumerate}
\end{definition}
Because $ \A $ has a reduced representative matrix, by Proposition \ref{propar} we may assume without restriction of generality that a properly arranged representative matrix $ P $ is reduced. The matrix $ P $ is non-singular, for it is a representative matrix of the nonsingular matrix  $ \A $. Hence $ P $ is diagonally eliminable by Proposition \ref{diagonally}.

Definition \ref{defGJext} extends the notions of Gauss-Jordan operations matrix and intermediate matrix to matrices of external numbers.
\begin{definition}\label{defGJext} Let $\A\in \M_{n}(\E)$ be diagonally eliminable with respect to a representative matrix $P$. For $ 1\leq q \leq 2n $ we denote the $ q^{th} $ Gauss-Jordan operations matrix by $\G^{P}_{q}$ and we write 
	$\G^P=\G^{P}_{2n}(\G^{P}_{2n-1}\cdots(\G^{P}_{2}\G^{P}_{1}))$.
\end{definition}
We will see that under the condition of stability of Definition \ref{defrelun} below, the result of the Gauss-Jordan procedure does not depend on the choice of the representative matrix and we may simply write $\G^P=\G$. We may also write $\G_{2q}^P=\G_{2q}$ for $1\leq q\leq 2n$ if there is no ambiguity on the representative matrix $ P $. Then we also adopt the notation of Definition \ref{defgau} for the intermediate matrices, i.e. we have
\begin{align}\label{defaqext}
	\G_q^P(\G_{q-1}^P \cdots (\G_1^P\A)) &= \G_q(\G_{q-1} \cdots (\G_1\A))\equiv\A^{(q)}\notag \\
	&\equiv[\alpha^{(q)}_{ij}]_{n\times n}\equiv[a^{(q)}_{ij}+A^{(q)}_{ij}]_{n\times n}=P^{(q)}+[A^{(q)}_{ij}]_{n\times n}.
\end{align}
The \emph{Gauss-Jordan procedure} applied to the matrix $ \A $ is the sequence $ \A,$ $\A^{(1)},\dots,\A^{(2n)} $. We will see that for stable matrices the last matrix is a near-identity matrix.


We recall the notion of relative uncertainty for matrices from \cite{Jus}, and use it to define stable matrices. 


\begin{definition}\label{defrelun}
	Let $\mathcal{A}=[\alpha_{ij}]_{n\times n}\in \M_{n}(\E)$ be a limited non-singular matrix. The  \emph{relative uncertainty} $ R(\mathcal{A }) $ is defined by $ R(\mathcal{A })=\overline{A}/\Delta $. The matrix $ \A $ is called \emph{stable} if $  R(\mathcal{A })\subseteq \oslash$.
\end{definition}

The biggest neutrix $ \overline{A} $ occurring in a limited matrix $ \A $ is always contained in $ \oslash $, but if $ \Delta $ is infinitesimal, for the matrix to be stable, the entries need to be sharper.

\subsection{Flexible systems and stability}\label{flexible systems}

We recall the definition of flexible systems of linear equations of \cite{Jus} in  a slightly modified form, and show equivalence with the earlier definition. For the particular case of square non-singular systems we define a notion of stability, implying that the Gauss-Jordan operations give rise to at most a moderate increase of imprecisions.

\begin{definition}\label{extvec}
	Let $ n\in \N $ be standard and $ \xi_1,\dots,\xi_n $ be external numbers. Then $\xi\equiv (\xi_1,\dots,\xi_n)^{T}$ is called an \emph{external vector}. For $ 1\leq i\leq n $, let $ \xi_i=x_{i}+X_{i} .$ Then $ x\equiv(x_{1},\dots,x_{n})^{T} $ is called a \emph{representative vector} and $ X\equiv(X_{1},\dots,X_{n})^{T} $ is called the \emph{associated neutricial vector}, i.e. $ \xi=x+X.$
\end{definition}

\begin{definition}
	\label{defflexiblesystem} A \emph{flexible system} is a system of inclusions	
	\begin{equation}  \label{hpttr}
		\left \{%
		\begin{matrix}
			\alpha_{11} x_1+ & \alpha_{12}x_2+ & \cdots & +\alpha_{1n}x_n & 
			\subseteq \beta_{1} \\ 
			\vdots & \vdots & \ddots & \vdots & \vdots \\ 
			\alpha_{m1} x_1+ & \alpha_{m2}x_2+ & \cdots & +\alpha_{mn}x_n & 
			\subseteq \beta_{m}%
		\end{matrix}%
		\right.,
	\end{equation}
	where $m,n$ are standard natural numbers, $ x=(x_{1},\dots,x_{n})\in \R^{n} $ and  $\alpha_{ij}\equiv a_{ij}+A_{ij}$ and $\beta_{i}\equiv b_i+B_{i}$ are external numbers for $ 1\leq i\leq m $ and $ 1\leq j\leq n $. We denote the matrix $%
	[\alpha_{ij}]_{m\times n} $ by $\mathcal{A}$, the representative matrix $ [a_{ij}]_{m\times n}$ by $ P $, the associated neutricial matrix $ [A_{ij}]_{m\times n}$ by $ A $ and the external vector $(\beta_{i},\dots,\beta_{m})^T $ by $\mathcal{B} $.  A vector $x$ is called an \emph{admissible solution} of the flexible system \eqref{hpttr} if it satisfies the system.
	
\end{definition}

The system \eqref{hpttr} is equivalent with the inclusion $ \mathcal{A}x\subseteq\mathcal{B} $, and usually is written in the matrix form $ \A|\B $.

In \cite{Jus} and \cite{NamImme} flexible systems with variables in the form of external numbers have been considered, i.e. systems of the form 
\begin{equation}  \label{hpttq}
	\left \{%
	\begin{matrix}
		\alpha_{11} \xi_1+ & \alpha_{12}\xi_2+ & \cdots & +\alpha_{1n}\xi_n & 
		\subseteq \beta_{1} \\ 
		\vdots & \vdots & \ddots & \vdots & \vdots \\ 
		\alpha_{m1} \xi_1+ & \alpha_{m2}\xi_2+ & \cdots & +\alpha_{mn}\xi_n & 
		\subseteq \beta_{m}%
	\end{matrix}%
	\right.,
\end{equation}
with an \emph{admissible solution} being an external vector $\xi$ satisfying the system. The systems \eqref{hpttq} and \eqref{hpttr} are equivalent, because of  Proposition \ref{extreal}.
\begin{proposition}\label{extreal}
	An external vector $ \xi $ is an admissible solution of \eqref{hpttq} if and only if every representative $ x $ of $\xi$ is an admissible solution of \eqref{hpttr}.
\end{proposition}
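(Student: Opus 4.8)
The plan is to reduce both notions of admissible solution to a single set identity connecting the external left-hand side of \eqref{hpttq} with the family of real left-hand sides of \eqref{hpttr} obtained by letting the unknowns run through all representatives. Write $\xi_j=x_j+X_j$; a representative of the external vector $\xi$ is then any real vector $x=(x_1,\dots,x_n)^T$ with $x_j\in\xi_j$ for $1\le j\le n$, and running over all representatives of $\xi$ amounts to running over all such tuples.

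First I would establish, for each row $i$ with $1\le i\le m$, the identity
\begin{equation*}
	\sum_{j=1}^{n}\alpha_{ij}\xi_j=\bigcup\Bigl\{\,\sum_{j=1}^{n}\alpha_{ij}x_j \;:\; x_j\in\xi_j,\ 1\le j\le n\,\Bigr\}.
\end{equation*}
Since the operations on external numbers are, by definition, Minkowski operations on sets of reals (Definition \ref{defnam}), one has $\alpha_{ij}\xi_j=\bigcup_{x_j\in\xi_j}\alpha_{ij}x_j$, because letting the second factor range over its points and taking the union recovers the whole product set; and the Minkowski sum distributes over such unions. Consequently an element of $\sum_{j}\alpha_{ij}\xi_j$ is exactly a sum $\sum_{j}a_jx_j$ with $a_j\in\alpha_{ij}$ and $x_j\in\xi_j$, which is precisely an element of the right-hand union.

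Granting this identity, both implications follow from monotonicity of inclusion. For the forward direction, suppose $\xi$ is an admissible solution of \eqref{hpttq}, so $\sum_{j}\alpha_{ij}\xi_j\subseteq\beta_i$ for every $i$. Each set $\sum_{j}\alpha_{ij}x_j$ occurring in the union is contained in $\sum_{j}\alpha_{ij}\xi_j$, hence in $\beta_i$; thus every representative $x$ satisfies \eqref{hpttr}. Conversely, if every representative $x$ is an admissible solution of \eqref{hpttr}, then for each $i$ every set $\sum_{j}\alpha_{ij}x_j$ in the union lies in $\beta_i$, and since a union of subsets of $\beta_i$ is again a subset of $\beta_i$, the identity gives $\sum_{j}\alpha_{ij}\xi_j\subseteq\beta_i$ for every $i$, i.e. $\xi$ satisfies \eqref{hpttq}.

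The only genuine obstacle is the set identity of the first step: one must check that the external arithmetic of Definition \ref{defnam} coincides with the pointwise Minkowski operations, so that the external left-hand side is neither larger nor smaller than the union of its real-representative counterparts. Once this coincidence is in place the two directions are purely formal, resting only on the facts that inclusion in the fixed set $\beta_i$ passes to subsets and is preserved under unions.
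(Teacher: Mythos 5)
Your proposal is correct and matches the paper's own argument in substance: the paper likewise treats the forward direction as immediate monotonicity, and for the converse picks an arbitrary element $t$ of the Minkowski combination $\alpha_{i1}\xi_1+\cdots+\alpha_{in}\xi_n$, decomposes it as $t=a_{i1}\overline{x_1}+\cdots+a_{in}\overline{x_n}$ with $a_{ij}\in\alpha_{ij}$, $\overline{x_j}\in\xi_j$, and concludes $t\in\beta_i$ by applying the hypothesis to the representative vector $(\overline{x_1},\dots,\overline{x_n})$ --- which is exactly your union identity phrased element-wise. Both arguments rest on the same fact you flag, namely that the operations of Definition \ref{defnam} are genuine Minkowski operations on sets of reals, which the paper takes as given.
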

\begin{proof}
	Let $ \xi=(\xi_{1},\dots,\xi_{n})$. It is obvious that if the inclusions of \eqref{hpttq} are satisfied by $ (\xi_{1},\dots,\xi_{n}) $, they are also satisfied by any representative vector $ x=(x_{1},\dots,x_{n}) $. 
	
	Conversely, let $ 1\leq i\leq m $, and assume that $ \alpha_{i1} x_{1}+  \cdots  +\alpha_{in}x_{n}\subseteq \beta_{i} $ whenever $ x_{1}\in \xi_{1},\dots, x_{n}\in \xi_{n}$. Let $ t\in \tau\equiv\alpha_{i1} \xi_1+  \cdots  +\alpha_{in}\xi_n  $. It follows from the definition of the Minkowski operations that for all $ j $ with $ 1\leq j\leq n$ there exist $ a_{ij} \in \alpha_{ij}$ and $ \overline{x_{j}}\in \xi_{j} $ such that $ t= a_{i1} \overline{x_{1}}+  \cdots  +a_{in}\overline{x_{n}} $. Then $ t\in \beta_{i}  $. Hence $ \tau \subseteq \beta_{i}  $. 
	
	We conclude that the external vector $ \xi $ is an admissible solution of \eqref{hpttq} if and only if all its representative vectors are.	
\end{proof}
In the present article we only study the system \eqref{hpttr} with real variables, in the case $ m=n $.

We now introduce some terminology, in particular we carry over some of the notions on matrices of Section \ref{matrices} to systems of equations.

\begin{definition}\label{defsysA}
	The system $ \A|\B $ is called \emph{reduced} if $\mathcal{A}$ is reduced, \emph{homogeneous} if $ \B $ is a neutrix vector, \emph{upper homogeneous} if $ \overline{\beta} $ is a neutrix and \emph{uniform} if the neutrices of the right-hand side $B_{i}\equiv B $ are all the same. The system is called {\em non-singular} if $\mathcal{A}$ is non-singular, \emph{properly arranged} respectively \emph{diagonally eliminable} (with respect to a matrix of representatives $ P $) if $ \A $ is properly arranged respectively diagonally eliminable with respect to $ P $. 
\end{definition}

We recall the following notion of relative uncertainty of external vectors from \cite{Jus} and use it, together with the notion of relative uncertainty of matrices of Definition \ref{defrelun} to define stable systems.

\begin{definition}
	Let $ \mathcal{B}=(\beta_{1},\dots,\beta_{n})^{T} $ be an external vector. 	Let $ |\overline{\beta}|=\max\limits_{1\leq i\leq n} |\beta_{i}|$ and $\underline{B}=\min\limits_{1\leq i\leq n} B_{i}$. 
	
	If $ \overline{\beta} $ is zeroless, its  \emph{relative uncertainty} $ R(\mathcal{B})$ is defined by 
	
	\begin{equation*}	
		R(\mathcal{B})=\underline {B}\slash{\overline{\beta}}.	
	\end{equation*}	
	In the special case that $ \overline{\beta}=B $ for some neutrix $ B $, we define
	\begin{equation*}	
		R(\mathcal{B})=\underline{B}:B.	
	\end{equation*}	
\end{definition}
Observe that, whenever $ 1\leq i \leq n $, it holds that 
\begin{equation}\label{rbeta}
	R(\mathcal{B})\beta_{i}\subseteq B,\end{equation}
and that $ R(\mathcal{B})\subseteq\oslash $ if the system is not upper homogeneous.

\begin{definition}
	\label{dn Gauss-Jordan eliminable-new} Let $ \A\in \M_{n}(\E) $ be limited and non-singular. The system $ \A|\B $ is said to be \emph{stable} if 	
	\begin{enumerate}
		\item $ \A $ is stable. \label{dn1}
		\item $R(\mathcal{A}) \subseteq R(\mathcal{B})$. \label{dn2n}
		\item \label{dn4n} $\Delta$ is not an absorber of $\underline{B}$.
	\end{enumerate}
\end{definition}	

Condition \eqref{dn2n} expresses that in a sense the coefficient matrix is more precise than the right-hand member and Condition \eqref{dn4n} expresses that the determinant $\Delta $, which of course must be non-zero, should not be too small. Condition \eqref{dn1} originating from Definition \ref{defrelun} expresses that the neutrices in the coefficient matrix are small with respect to the determinant. Note that if $ R(\A)=\oA/\Delta\supset \oslash$ we have $R(\B) \supseteq R(\A)\supseteq\pounds  $, which means the system must be upper homogeneous. Then uniform systems, which form the  principal class of systems under consideration (see Convention \ref{conv} below), are homogeneous, which is very restrictive.


It will be seen that the notion of stability is respected by the steps of the Gauss-Jordan procedure, which possibbly lead to only a moderate increase of imprecisions in the coefficient matrix and leave the imprecision of the right-hand member invariant, finally resulting in the same imprecision for the solution. 

\begin{notation}\label{notpropar}
	Suppose that $\A\in \M_n(\E)$ and that the system  $ \A|\B $ is non-singular, properly arranged with respect to a matrix of representatives $ P $ and uniform.  We write $[B]=[B, B,\dots, B]^T$, $ \B^{(0)}=\B $ and  $ [B]^{(0)}=[B^{-1}]^{(0)}=[B] $. For $1\leq q\leq 2n$ we write	
	\begin{alignat*}{2}	
		\B^{(q)}&=\G^{P}_q(\G^{P}_{q-1}\cdots \G^{P}_1\B))\equiv [\beta^{(q)}_1,\dots, \beta^{(q)}_n]^{T}, \notag\\
		\left|\overline{\beta^{(q)}}\right|&=
		\max_{1\leq i\leq n}\left|\beta_i^{(q)}\right|,\notag\\
		[B]^{(q)}&=\G^{P}_q(\G^{P}_{q-1}\cdots \G^{P}_1[B])).\notag	
	\end{alignat*}
\end{notation} 
With these notations we see that after $ q $ Gauss-Jordan operations the system becomes $ \A^{(q)}|\B^{(q)} $, and for $ q=2n$ the Gauss-Jordan procedure ends with $ \G^{P}\A|\G^{P}\B $.

\subsection{Solutions sets and main results}\label{mainresults} 

We define solution sets in several ways. The Main Theorem states that under the conditions of stability they are all equal. We start with a theorem which is the principal tool for the proof of the Main Theorem, saying that the Gauss-Jordan operations respect the notion of stability. We will always suppose that Convention \ref{conv} holds.

\begin{convention}\label{conv}
	From now on we always suppose that the system  $ \A|\B $ is square, i.e. $\A\in \M_n(\E)$, and that the system is non-singular, reduced, 
	properly arranged with respect to a reduced matrix of representatives $ P $ and uniform.
\end{convention}

As for non-singular systems, only the condition of uniformity is restrictive. In the context of the Gauss-Jordan procedure the condition is essential, since the simple addition of equations may have the effect that the solution of the resulting system is no longer feasible for the original system, see Example \ref{ex1change}. By transforming a system with different neutrices $ B_{1},\dots,B_{n} $ in the right-hand side into the system with neutrices at the right-hand side equal to $ \underline{B} $, we get a uniform system, whose solutions are always feasible with respect to the original system.


The first theorem expresses that each Gauss-Jordan operation transforms a stable system into a stable system, and at the end we find a stable system with a coefficient matrix in the form of a near-identity matrix. As we will see its solution is simply the right-hand member.
\begin{theorem}	\label{lmqhnt}   Suppose that the flexible system $ \A|\B $ is properly arranged with respect to a representative matrix $P$ and stable.  Then 
	\begin{enumerate}
		\item \label{p1main} The intermediate system $ \A^{(q)}|\B^{(q)}  $ is stable for all $ q $ such that $0\leq q\leq 2n$. 
		
		\item \label{p2main} In particular  			
		$ \mathcal{G}^P\mathcal{A}|\G^P\mathcal{B} $ is stable, and $ \mathcal{G}^P\mathcal{A} $ is a near-identity matrix.
	\end{enumerate}
\end{theorem}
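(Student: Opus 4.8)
The natural strategy is to prove part \ref{p1main} by external induction on $q$, the base case $q=0$ being precisely the hypothesis that $\A|\B$ is stable, and then to obtain part \ref{p2main} by specialising to $q=2n$. At the inductive step I would pass from $\A^{(q-1)}|\B^{(q-1)}$ to $\A^{(q)}|\B^{(q)}=\G^P_q\A^{(q-1)}\,|\,\G^P_q\B^{(q-1)}$ and check that the three clauses of Definition \ref{dn Gauss-Jordan eliminable-new} survive, distinguishing the odd steps $q=2k+1$ (normalisation of the pivot row, \eqref{defgodd}) from the even steps $q=2k+2$ (clearing of a column, \eqref{defgeven}).

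The three quantities to follow are the determinant $\Delta^{(q)}=\det(\A^{(q)})$, the largest neutrix $\overline{A^{(q)}}$ of the coefficient matrix, and, on the right-hand side, the smallest neutrix $\underline{B^{(q)}}$ together with $|\overline{\beta^{(q)}}|$. Because the operation matrices $\G^P_q$ have real entries, the determinant is multiplicative, so $\Delta^{(q)}=\det(\G^P_q)\,\Delta^{(q-1)}$: an odd step multiplies by the real scalar $1/a^{(2k)}_{k+1\,k+1}$ and an even step leaves $\Delta^{(q)}$ unchanged. The transformation of the neutrices is the business of the lemmas of Section \ref{lemmas}: an odd step divides the neutrices of row $k+1$ by the same real pivot, while an even step replaces $A^{(q-1)}_{ij}$ by $\max\{A^{(q-1)}_{ij},\,|a^{(2k+1)}_{i\,k+1}|\,A^{(q-1)}_{k+1\,j}\}$; I would apply the same two rules to $\B^{(q)}$. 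The explicit formulas for the $a^{(q)}_{ij}$ of \cite{Li},\cite{NJI1} express both $\Delta^{(q)}$ and the entries $\alpha^{(q)}_{ij}$ as quotients of (principal) minors of $\A$, which is what makes the denominators cancel below.

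For Condition \ref{dn1} the point is that in $R(\A^{(q)})=\overline{A^{(q)}}/\Delta^{(q)}$ the pivot factor introduced by an odd step appears in both numerator and denominator and cancels; when the pivot and the multipliers $a^{(2k+1)}_{i\,k+1}$ are limited one even gets, using $\pounds N=N$, that $R(\A^{(q)})\subseteq R(\A^{(q-1)})$, so that $R(\A^{(q)})$ is non-increasing and stays in $\oslash$. Conditions \ref{dn2n} and \ref{dn4n} are then settled by running the same bookkeeping through $R(\B^{(q)})$ and $\underline{B^{(q)}}$, invoking \eqref{rbeta} and the initial inclusion $R(\A)\subseteq R(\B)$. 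For part \ref{p2main}, the full procedure reduces the non-singular representative matrix $P$ to $I_n$, so $\G^P\A$ has representative matrix $I_n$ and $\Delta^{(2n)}=\Delta/\det(P)$, which is appreciable since $\det(P)$ is a representative of $\Delta$; Condition \ref{dn1} at $q=2n$ then forces $\overline{A^{(2n)}}\subseteq\oslash$, i.e. $\G^P\A$ is a near-identity matrix.

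The delicate point, and the main obstacle, is that the pivots $a^{(2k)}_{k+1\,k+1}$ need not be limited: as noted after Definition \ref{defrelun}, $\Delta$ and hence the minors $m_k$ may be infinitesimal, so a pivot $\pi=a^{(2k)}_{k+1\,k+1}=m_{k+1}/m_k$ can be unlimited and the naive bound $|\pi|\,R(\A^{(q-1)})$ would leave $\oslash$. Controlling this requires the cancellation to be exact rather than up to a limited factor: one must use the explicit minor expressions for $\Delta^{(q)}$ and $\overline{A^{(q)}}$ so that the common minor denominators disappear identically, together with the proper-arrangement bounds $|m^{1\cdots k\,i}_{1\cdots k\,j}|\le|m_{k+1}|$ and the relation between the orders of the determinant and its principal minors of Subsection \ref{excalculus} to bound the surviving numerators. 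I expect establishing this uniform control across all $2n$ steps to be where the real work of Section \ref{lemmas} lies.
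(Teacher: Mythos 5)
Your overall architecture (external induction on $q$, tracking $\Delta^{(q)}$, $\overline{A^{(q)}}$ and the right-hand side, exact cancellation of minors, specialisation to $q=2n$) matches the paper's, and your cancellation mechanism for Condition \ref{dn1} of Definition \ref{dn Gauss-Jordan eliminable-new} is essentially Proposition \ref{value of Ak} plus Lemma \ref{lower bound of d2k}: $\overline{A^{(q)}}\subseteq\oA/m_k$ while $d^{(q)}=d/m_k$, so the $m_k$ cancels and $R(\A^{(q)})\subseteq R(\A)\subseteq\oslash$. However, your ``main obstacle'' is misidentified. Under the standing hypothesis of proper arrangement the pivot entry $a^{(2k)}_{k+1\,k+1}=m_{k+1}/m_k$ \emph{is} limited --- this is exactly what the proper-arrangement bounds yield through the induction of Proposition \ref{limitofelement}, and it is recorded as $|m_{k+1}/m_k|\in\pounds$ in Theorem \ref{propmm}. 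The quantity that can be unlimited is its reciprocal, the multiplier $g^{(2k+1)}_{k+1\,k+1}=m_k/m_{k+1}\in\pounds/\Delta$ that is actually applied to row $k+1$ and to $\beta^{(2k)}_{k+1}$. Once this is put right, your own monotonicity argument for $R(\A^{(q)})$ goes through; no further ``exact control across all $2n$ steps'' is needed on the coefficient side beyond what Proposition \ref{limitofelement}, Theorem \ref{propmm} and Proposition \ref{value of Ak} already provide.

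The genuine gap is the right-hand side, which you dismiss as ``the same bookkeeping''. It is not: on the right-hand side nothing cancels the unlimited multiplier, $\beta^{(q)}_{k+1}$ is simply multiplied by $m_k/m_{k+1}$, and $\left|\overline{\beta^{(q)}}\right|$ can genuinely blow up (see the second example of Section \ref{examples}, where terms $1/\eps$ appear). Hence $R(\B^{(q)})=B/\overline{\beta^{(q)}}$ can \emph{shrink}, and the inclusion $R(\A^{(q)})\subseteq R(\A)\subseteq R(\B)$ coming from your monotonicity argument does not give Condition \ref{dn2n} for the intermediate system, which asks for $R(\A^{(q)})\subseteq R(\B^{(q)})$. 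What rescues the proof, and what your plan never isolates, is that multiplication by the multiplier leaves the neutrix $B$ \emph{exactly} invariant: since $\oslash\Delta<|m_{k+1}/m_k|\in\pounds$ and $\Delta$ is not an absorber of $B$ --- this is precisely where Condition \ref{dn4n} enters --- one has $\tfrac{m_k}{m_{k+1}}B=\tfrac{m_{k+1}}{m_k}B=B$ (Proposition \ref{tisogiunguyen}), whence $[B]^{(q)}=[B]$ for all $q$ (Theorem \ref{qhnt}), giving uniformity and Condition \ref{dn4n} for every intermediate system. Condition \ref{dn2n} must then be proved by a \emph{joint} induction on the products, namely $A^{(q)}_{ij}\,\overline{\beta^{(q)}}\subseteq B$ for all $i,j$, with a case distinction according to whether the maximal component $\overline{\beta^{(q+1)}}$ is the one hit by $m_k/m_{k+1}$ or not, so that each unlimited factor lands on $B$ and is absorbed there; separate estimates for $\overline{A^{(q)}}$ and $\overline{\beta^{(q)}}$ cannot accomplish this, since the needed compensation is between the two sides of the system. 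This joint product induction, together with the division by $\Delta^{(q)}$ justified by Proposition \ref{divdeltak}, is the real content of the paper's proof of Part \ref{p1main}, and it is missing from your plan.
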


\begin{definition}\label{defsol}
	The \emph{solution} $ S $ of $ \A|\B $ is the (external) set of all real admissible solutions. If the Minkowski product $ \mathcal{A}S$ satisfies $\mathcal{A}S=\mathcal{B} $ we call $ S $ \emph{exact}.
\end{definition}

We now define the Gauss-Jordan solution and the Cramer solution.

\begin{definition}\label{defgauex}
	Assume the system $ \A|\B $ is properly arranged with respect to a  representative matrix $P$ of $\A$. The \emph{Gauss-Jordan solution} $ G^{P} $ of $ \A|\B $ with respect to $ P $ is defined by 
	\begin{equation}\label{forga}
		G^{P}=\left\{x\in \R^{n}|\big(\G^{P}(\A)\big)x\subseteq \mathcal{G}^{P}(\mathcal{B})\right\}. 
	\end{equation}
	If $ G^{P} $ does not depend on the choice of $ P $, we simply  call it the Gauss-Jordan solution, denoted by $ G $.
\end{definition}
\begin{definition}\label{defcra}
	Consider the system $ \A|\B $. Let $M_i$ be the matrix obtained from $\A$ by the substitution of the $%
	i^{th}$ column by the right-hand member $ \B$. Then the external vector
	
	\begin{equation}\label{forcra}
		\xi^{T}=\left(\dfrac{\det (M_1)}{\Delta},\ldots,%
		\dfrac{\det (M_n)}{\Delta}\right)^{T}
	\end{equation}
	is called the \emph{Cramer-solution} if every representative vector $ x $ satisfies $ \A|\B $. 
	
\end{definition}

\begin{theorem}[Main Theorem]\label{maintheorem} Assume the system $ \A|\B $ is stable, and properly arranged with respect to a representative matrix $P$ of $\A$. Let $ S $ be its solution. Then 
	\begin{equation}\label{formt}S=G=\G^{P}(\B)= \left(\dfrac{\det (M_1)}{\Delta},\ldots,	\dfrac{\det (M_n)}{\Delta}\right)^{T}.
	\end{equation}	
\end{theorem}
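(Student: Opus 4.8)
The plan is to prove the chain of equalities in \eqref{formt} one link at a time, with Theorem \ref{lmqhnt} as the pivotal tool. That theorem guarantees that the reduced system $\G^P\A\,|\,\G^P\B$ is again stable and, by part \eqref{p2main}, that $\G^P\A$ is a near-identity matrix $\mathcal{I}_n$. A second structural observation that I would record at the outset is that $\G^P$, being the product of the elementary matrices $\G^P_1,\dots,\G^P_{2n}$ of Definition \ref{defGJext}, has \emph{real} entries and is invertible with a real inverse (the pivots being nonzero by diagonal eliminability). This is what makes the matrix manipulations below legitimate in spite of the failure of full distributivity for external numbers: for a real matrix $M$ and a real vector $x$ one has the associativity $M(\A x)=(M\A)x$, since this only requires distributing real scalars over Minkowski sums and reordering finite sums of external numbers, operations valid in the additive semigroup of $\E$.

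First I would establish $S=G$. By Definition \ref{defsol} a real vector $x$ lies in $S$ iff $\A x\subseteq\B$. Left multiplication by the real matrix $\G^P$ preserves componentwise inclusion, so $\A x\subseteq\B$ gives $(\G^P\A)x=\G^P(\A x)\subseteq\G^P\B$, i.e. $x\in G$ by Definition \ref{defgauex}; hence $S\subseteq G$. For the converse I would apply the real inverse $(\G^P)^{-1}$ to the inclusion $(\G^P\A)x\subseteq\G^P\B$ and use associativity again: $\A x=\big((\G^P)^{-1}\G^P\A\big)x\subseteq\big((\G^P)^{-1}\G^P\big)\B=\B$, so $x\in S$. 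Thus $S=G$, and in particular $G$ does not depend on $P$.

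Next I would show $G=\G^P(\B)$, which is where stability is genuinely used. Write $\mathcal{I}_n=I_n+[N_{ij}]$ with $N_{ij}\subseteq\oslash$, so that the $i$-th component of $\mathcal{I}_n x$ is $x_i+\sum_j N_{ij}x_j$. Since the solutions are limited and the reduced system is uniform with common right-hand neutrix $B=B_i^{(2n)}$, stability condition \eqref{dn2n} applied to $\A^{(2n)}\,|\,\B^{(2n)}$ (whose determinant is appreciable because its representative is $I_n$) yields $\overline{A^{(2n)}}\cdot\big|\overline{\beta^{(2n)}}\big|\subseteq B$; consequently the perturbation $\sum_j N_{ij}x_j$ is absorbed into $B$. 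Therefore $\mathcal{I}_n x\subseteq\G^P\B$ holds precisely when $x_i\in\beta_i^{(2n)}$ for every $i$, that is, iff $x\in\G^P(\B)$. This gives $G=\G^P(\B)$.

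Finally I would identify $\G^P(\B)$ with the Cramer vector $(\det(M_1)/\Delta,\dots,\det(M_n)/\Delta)^T$ of Definition \ref{defcra}, invoking the generalization of Cramer's Rule proved in Section \ref{Cramer}: on representatives $\G^P$ acts as $P^{-1}$, so $\G^P(\B)$ reproduces the classical quotients of determinants, and Section \ref{Cramer} upgrades this to the equality of external numbers $\beta_i^{(2n)}=\det(M_i)/\Delta$. I expect the \textbf{main obstacle} to lie exactly in this last step: matching the \emph{neutrix parts} produced by the successive Gauss-Jordan operations with those of the determinant quotients. This demands the explicit transition-matrix formulas of Section \ref{preliminary}, the step-by-step error-propagation estimates of Section \ref{lemmas}, and the control of orders of magnitude of determinants by their principal minors noted in Subsection \ref{excalculus}; the delicate point is to verify that no spurious blow-up of imprecision occurs, so that the Gauss-Jordan neutrix output coincides with, rather than merely contains, the Cramer neutrix. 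Assembling the three links then yields $S=G=\G^P(\B)=\big(\det(M_1)/\Delta,\dots,\det(M_n)/\Delta\big)^T$, as claimed.
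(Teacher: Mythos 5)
Your decomposition (first $S=G$, then $G=\G^{P}(\B)$ via the near-identity matrix, then the Cramer identification) mirrors the paper's own chain through Theorems \ref{idltdng}, \ref{TheoremrepresentGausssolution} and \ref{dlng3}, but the converse inclusion $G\subseteq S$ contains a genuine gap. You justify
\[
\A x=\big((\G^P)^{-1}\G^P\A\big)x\subseteq\big((\G^P)^{-1}\G^P\big)\B=\B
\]
as pure real-matrix associativity, but what your chain actually produces on the right is $(\G^P)^{-1}\big(\G^P\B\big)$, and rewriting this as $\big((\G^P)^{-1}\G^P\big)\B$ places the external vector $\B$ in an outer position, where Proposition \ref{combinetheorem}.\ref{associate of product of matrix} does not apply (it needs both outer factors real). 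What holds for free is only subassociativity (Proposition \ref{subassociativity}.\ref{inclusion1}), and it gives the inclusion in the wrong direction: $\B=\big((\G^P)^{-1}\G^P\big)\B\subseteq(\G^P)^{-1}\big(\G^P\B\big)$. The missing reverse inclusion is precisely Proposition \ref{Imme1}, whose proof is not formal algebra: it rests on Theorem \ref{qhnt}, i.e.\ the invariance $\G^P[B]=[B]$ and $(\G^P)^{-1}[B]=[B]$, which in turn uses the stability hypothesis that $\Delta$ is not an absorber of $B$ together with the pivot bounds coming from proper arrangement. Your argument, as written, would prove $G\subseteq S$ for every non-singular, reduced, properly arranged, uniform system, and that is false: Example \ref{notab} satisfies all of Convention \ref{conv} and every stability condition except the absorber one, the Gauss-Jordan procedure blows the right-hand side up to $(\oslash/\eps,\oslash/\eps)^T$, and a vector such as $x=(1,1/\sqrt{\eps})^T$ lies in $G$ but violates the first inclusion of the original system. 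So stability is used essentially in the very step you present as stability-free.

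Two further points. Your middle step is fine and is actually a more elementary route than the paper's: you absorb the perturbation $\sum_j N_{ij}x_j$ into $B$ using $\overline{A^{(2n)}}\,\big|\overline{\beta^{(2n)}}\big|\subseteq B$ (stability of the final system, Theorem \ref{lmqhnt}.\ref{p2main}), whereas the paper obtains Theorem \ref{Cramer-solution of near identity system} as a consequence of Cramer's rule and Proposition \ref{nhtt}. However, your last link is a plan rather than a proof: you defer the matching of the neutrix parts of $\G^{P}(\B)$ with those of $\det(M_i)/\Delta$ as the ``main obstacle''. The paper avoids that direct computation entirely: it proves independently that the Cramer vector is the solution of a stable system (Theorem \ref{thecra}, whose neutricial bookkeeping is Proposition \ref{nhtt} and Lemma \ref{bd3.1}), so that $S=G$ and $S=\xi$ immediately give $G=\xi$, with no need to push the Gauss-Jordan neutrices through the determinant quotients. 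Repairing your proposal therefore requires (i) invoking Proposition \ref{Imme1} (hence Theorem \ref{qhnt} and stability) in the converse half of $S=G$, and (ii) replacing the unexecuted final step by the argument through Theorem \ref{thecra}.
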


The solution of stable systems by Cramer's rule was shown in \cite{Jus} for non-homogeneous systems. 

\section{Examples}\label{examples}

We start with some straightforward applications of Theorem \ref{maintheorem}. Then we indicate some properties of flexible systems which are not shared by ordinary systems, and illustrate the role of the conditions of Theorem \ref{maintheorem}.

In example \ref{ex6} we verify first that the system is stable, then we show Gauss-Jordan procedure in some detail, searching for neutrices instead of zeros, to see at the end that the right-hand side is the solution indeed. Observe that the solution is given in the form of truncated expansions.
\begin{example}	\label{ex6}	
	Consider the  system 
	\begin{equation}\label{sy6}
		\left\{ 
		\setlength\arraycolsep{0pt}
		\begin{array}{ r >{{}}c<{{}} r >{{}}c<{{}} r  @{{}\subseteq {}} r  >{{}}c<{{}}r }
			\left( 1+\varepsilon ^{2}\oslash \right) x _{1} &+ &x _{2} &+& \left(
			1+\varepsilon ^{3}\pounds \right) x _{3}& 1
			+\varepsilon \oslash \\ 
			\left( 1+\varepsilon ^{3}\pounds \right) x _{1}&+&\left( -\frac{1}{2}+\varepsilon ^{2}\oslash
			\right) x _{2}&-&\frac{1}{2}x _{3}&  -2+\varepsilon \oslash \\ 
			\left( \frac{1}{2}\varepsilon +\varepsilon ^{3}\oslash \right) x _{1}&+&\frac{1}{2}x _{2}&+&\left(
			1+\varepsilon ^{2}\oslash \right) x _{3}& \varepsilon+\varepsilon \oslash %
		\end{array},
		\right.
	\end{equation}%
	where $\varepsilon $ is a positive infinitesimal. Let $\mathcal{A}$ be its matrix of coefficients and $\mathcal{B}$ be the right-hand member.
	The matrix is reduced and non-singular, with $\Delta =\det \mathcal{A}=-\frac{3}{4}+\varepsilon ^{2}\oslash $ 
	zeroless. 
	Let \begin{equation}\label{repmatrixex}P=\begin{bmatrix}
			1&1&1\\ 1& -\frac{1}{2} &-\frac{1}{2}\\
			\frac{1}{2}\eps &\frac{1}{2}&1
		\end{bmatrix}.
	\end{equation} 
	Then $P$ is a representative matrix of $\A$, and is properly arranged. Indeed, formula \eqref{defminor} is obvious for $k=1$, and is also satisfied for $k=2$  with 
	$m_2=m_{12}^{12}
	=-\frac{3}{2},\quad m_{13}^{12}
	=-\frac{3}{2}, \quad  m_{12}^{13}
	=\frac{1}{2}-\frac{1}{2}\eps, \quad  m_{13}^{13}
	=1-\frac{1}{2}\eps$. As a consequence, $\A$ is properly arranged.
	Because $ R(\A)=\varepsilon ^{2}\oslash\subseteq R(\B)=\varepsilon \oslash$ and $ \Delta B =(-\frac{3}{4}+\varepsilon ^{2}\oslash)\varepsilon \oslash=\varepsilon \oslash=B  $, the system is stable. 
	
	By Theorem \ref{dlng3} the solution may be obtained by the Gauss-Jordan procedure. It is given by \begin{equation}\label{sol1}
		S\equiv\left[ 
		\begin{array}{l}
			\xi _{1} \\ 
			\xi_{2} \\ 
			\xi_{3}%
		\end{array}%
		\right] =\left[ 
		\begin{array}{l}
			-1+\varepsilon \oslash \\ 
			4-3\varepsilon+\varepsilon \oslash \\ 
			-2+3\varepsilon+\varepsilon \oslash%
		\end{array}%
		\right],
	\end{equation}
	which we verify in detail. 	The second and third coordinate of $ S $ have the form of a truncated expansion. Also some expansions appear in the coefficients of the intermediate matrices. We get the following succession of stable systems:
	\begin{align*}
		\mathcal{A}|\mathcal{B} &=\left[ 
		\begin{array}{ccccc}
			1+\varepsilon ^{2}\oslash & 1 & 1+\varepsilon ^{3}\pounds  & | & 1+\varepsilon \oslash \\ 
			1+\varepsilon ^{3}\pounds  & -\frac{1}{2}+\varepsilon ^{2}\oslash & -\frac{1}{2} & | & -2+\varepsilon
			\oslash \\ 
			\frac{1}{2}\varepsilon +\varepsilon ^{3}\oslash & \frac{1}{2} & 1+\varepsilon ^{2}\oslash & | & 
			\varepsilon+\varepsilon \oslash%
		\end{array}%
		\right] \\
		&%
		\begin{tabular}{c}
			$\longrightarrow $ \\ 
			$L_{2}-L_{1}$ \\ 
			$L_{3}-\frac{1}{2}\varepsilon L_{1}$%
		\end{tabular}%
		\left[ 
		\begin{array}{ccccc}
			1+\varepsilon ^{2}\oslash & 1 & 1+\varepsilon ^{3}\pounds  & | & 1+\varepsilon \oslash \\ 
			\varepsilon ^{2}\oslash & -\frac{3}{2}+\varepsilon ^{2}\oslash & -\frac{3}{2}+\varepsilon ^{3}\pounds 
			& | & -3+\varepsilon \oslash \\ 
			\varepsilon ^{3}\oslash & \frac{1}{2}-\frac{1}{2}\varepsilon & 1-\frac{1}{2}\varepsilon +\varepsilon
			^{2}\oslash & | & \frac{1}{2}\varepsilon+\varepsilon \oslash%
		\end{array}%
		\right] \medskip \\	
		&%
		\begin{tabular}{c}
			$\longrightarrow $ \\ 
			$-\frac{2}{3}L_{2}$ \\ 
		\end{tabular}%
		\left[ 
		\begin{array}{ccccc}
			1+\varepsilon ^{2}\oslash & 1 & 1+\varepsilon ^{3}\pounds  & | & 1+\varepsilon \oslash \\ 
			\varepsilon ^{2}\oslash & 1+\varepsilon ^{2}\oslash & 1+\varepsilon ^{3}\pounds  & |
			& 2+\varepsilon \oslash \\ 
			\varepsilon ^{3}\oslash & \frac{1}{2}-\frac{1}{2}\varepsilon & 1-\frac{1}{2}\varepsilon +\varepsilon
			^{2}\oslash & | & \frac{1}{2}\varepsilon +\varepsilon \oslash%
		\end{array}%
		\right] \medskip \\
		&%
		\begin{tabular}{c}
			$L_{1}-L_{2}$ \\ 
			$\longrightarrow $ \\ 
			$L_{3}-\left( \frac{1-\varepsilon}{2} \right) L_{2}$%
		\end{tabular}%
		\left[ 
		\begin{array}{ccccccc}
			1+\varepsilon ^{2}\oslash  & \varepsilon ^{2}\oslash &  \varepsilon ^{3}\pounds  & |
			& -1+\varepsilon \oslash \\ 
			\varepsilon ^{2}\oslash  &1+\varepsilon ^{2}\oslash & 1+\varepsilon ^{3}\pounds  & |
			& 2+\varepsilon \oslash \\ 
			\varepsilon ^{2}\oslash & \varepsilon ^{2}\oslash & \frac{1}{2}+\varepsilon ^{2}\oslash
			& | & -1+\frac{3}{2}\varepsilon+\varepsilon \oslash%
		\end{array}%
		\right] 
			\end{align*}	
		\begin{align*}
			&%
		\begin{tabular}{c}
			$\longrightarrow $ \\ 
			$2L_{3}$ \\ 
		\end{tabular}%
		\left[ 
		\begin{array}{ccccc}
			1+\varepsilon ^{2}\oslash & \varepsilon ^{2}\oslash & \varepsilon ^{3}\pounds  & |
			& -1+\varepsilon \oslash \\ 
			\varepsilon ^{2}\oslash & 1+\varepsilon ^{2}\oslash & 1+\varepsilon ^{3}\pounds  & |
			& 2+\varepsilon \oslash \\ 
			\varepsilon ^{2}\oslash & \varepsilon ^{2}\oslash & 1+\varepsilon ^{2}\oslash
			& | & -2+3\varepsilon+\varepsilon \oslash%
		\end{array}%
		\right] \medskip 	\\	&	\begin{tabular}{c}
			$\longrightarrow $ \\ 
			$L_{2}-L_{3}$ \\ 
		\end{tabular}%
		\left[ 
		\begin{array}{ccccc}
			1+\varepsilon ^{2}\oslash  &   \varepsilon ^{2}\oslash   &\varepsilon ^{3}\pounds &|&   -1+\varepsilon \oslash \\ 
			\varepsilon ^{2}\oslash & 1+\varepsilon ^{2}\oslash & \varepsilon ^{2}\oslash  &|& 4-3\varepsilon + \varepsilon \oslash \\ 
			\varepsilon ^{2}\oslash & \varepsilon ^{2}\oslash & 1+\varepsilon ^{2}\oslash
			& | & -2+3\varepsilon+\varepsilon \oslash%
		\end{array}%
		\right] \equiv \mathcal{I}_3|S.
	\end{align*}	
	The system $\mathcal{I}_3|S  $ being stable, by Theorem \ref{maintheorem} the external vector $ S $ solves the latter system. It is easy to verify this by substitution, and it is straightforward to verify that Cramer's Rule yields the same solution. 
\end{example}

The next example deals with a system having a coefficient matrix with an infinitesimal determinant. Yet it is not an absorber of the neutrix occurring in the right-hand member. Also the remaining conditions for stability hold, so the Gauss-Jordan procedure still works. Again the solution will be an external vector with coordinates in the form of a truncated expansion, now starting with a "singular", i.e. unlimited term.
\begin{example}  Let $\eps$ be a positive infinitesimal. We will use the microhalos $ M_{\varepsilon}=\pounds\eps^{\not\infty} $ and $ M_{\varepsilon_{1}}=\pounds\eps_{1}^{\not\infty}  $, where $\eps_1=\eps^\omega$ with $\omega\in \N$ unlimited. 
	Consider the system $$\left\{ 
	\setlength\arraycolsep{0pt}
	\begin{array}{r >{{}}c<{{}} r  @{{}\subseteq {}} r  >{{}}c<{{}} r  >{{}}c<{{}} r }
		\left(1+\pounds \eps_1^{\not\infty}\right)x & + & y &  1& + & \pounds\eps^{\not\infty}\\
		x & + & (1-\eps+\pounds \eps^{\not\infty})y &   2& +& \pounds\eps^{\not\infty}
	\end{array}.
	\right.$$
	Let $\A=\begin{bmatrix}
		1+\pounds \eps_1^{\not\infty}& 1\\ 1& 1-\eps+\pounds \eps^{\not\infty}
	\end{bmatrix}$. Then $\Delta=\det\A=-\eps +\pounds \eps^{\not\infty}$ is zeroless. One easily verifies that the system is stable. Applying the Gauss-Jordan procedure we obtain 
	\begin{align*}
		\mathcal{A}|\mathcal{B} &= \begin{bmatrix}
			1+\pounds \eps_1^{\not\infty}& 1 &|& 1+\pounds\eps^{\not\infty}\\ 
			1& 1-\eps+\pounds \eps^{\not\infty}&|& 2+\pounds\eps^{\not\infty}
		\end{bmatrix} \\
		& 
		\begin{tabular}{c}
			$\longrightarrow $ \\ 
			$L_{2}-L_{1}$ \\ 
		\end{tabular}%
		\begin{bmatrix}
			1+\pounds \eps_1^{\not\infty}& 1 &|& 1+\pounds\eps^{\not\infty}\\ 
			\pounds \eps_1^{\not\infty} & -\eps+\pounds \eps^{\not\infty}&|& 1+\pounds\eps^{\not\infty}
		\end{bmatrix} \\
		&	\begin{tabular}{c}
			$\longrightarrow $ \\ 
			$-\frac{1}{\eps}L_{2}$ \\ 
		\end{tabular} \begin{bmatrix}
			1+\pounds \eps_1^{\not\infty}& 1 &|& 1+\pounds\eps^{\not\infty}\\ 
			\pounds \eps_1^{\not\infty} & 1+\pounds \eps^{\not\infty}&|& -\frac{1}{\eps}+\pounds\eps^{\not\infty}
		\end{bmatrix} \\
		&	\begin{tabular}{c}
			$\longrightarrow $ \\ 
			$L_1-L_{2}$ \\ 
		\end{tabular} \begin{bmatrix}
			1+\pounds \eps_1^{\not\infty}& \pounds \eps^{\not\infty} &|&\frac{1}{\eps}+1+ \pounds\eps^{\not\infty}\\ 
			\pounds \eps_1^{\not\infty} & 1+\pounds \eps^{\not\infty}&|& -\frac{1}{\eps}+\pounds\eps^{\not\infty}
		\end{bmatrix}.
	\end{align*} 
	By Theorem \ref{maintheorem} the vector $\xi=(\frac{1}{\eps}+1+ \pounds\eps^{\not\infty}, -\frac{1}{\eps}+\pounds\eps^{\not\infty})^T$ is the solution of the system. 
\end{example}

The following two examples show that flexible systems do not need to have exact solutions, and that the solution of a non-uniform system does not need to be an external vector; then it is also possible that the Gauss-Jordan operations lead to non-feasible solution, i.e. a vector which does not satisfy the original system.

\begin{example}\label{notexact}
	The simple  equation $ \oslash x\subseteq \pounds $ does not have an exact solution. Indeed, it is satisfied by all limited numbers, but not by any unlimited number. Hence $ S=\pounds $, with $ \oslash \pounds=\oslash\subset \pounds $.
\end{example}

\begin{example}\label{ex1change}
	Consider the flexible system  \begin{equation}\label{vd1change}	\left\{\begin{array}{rrrrl} 
			(1+\oslash)x &+& (1+\varepsilon\oslash)y & \subseteq & \oslash\\
			(1+\varepsilon\pounds)x & -& (1+\varepsilon\pounds) y  & \subseteq & \varepsilon\pounds
		\end{array}.\right. 
	\end{equation} 
	As shown in \cite{NamImme} the    solution  is given by 
	\begin{equation}\label{ntd}	
		N=\oslash
		\begin{pmatrix}
			\frac{1}{2}
			\\
			\frac{1}{2}\end{pmatrix} + \varepsilon\pounds \begin{pmatrix}
			\frac{1}{2}
			\\
			-\frac{1}{2}
		\end{pmatrix},
	\end{equation}
	which is not an external (neutricial) vector, though it is the result of applying a rotation to the neutricial vector $ (\oslash,\varepsilon\pounds) $.  
	
	To show that the Gauss-Jordan operations may not respect feasibility, we subtract the first equation from the second. Then we get 	
	\begin{equation*}\label{vd2change}	
		\left\{\begin{array}{rrrrl} 
			(1+\oslash)x &+& (1+\varepsilon\oslash)y & \subseteq & \oslash\\
			\oslash x & -& 2(1+\varepsilon\pounds) y  & \subseteq & \oslash
		\end{array}.\right. 
	\end{equation*} 
	The obvious solution is the neutricial vector $ K\equiv (\oslash,\oslash) $, but due to the fact that one neutrix at the right-hand side has been increased, it does no longer satisfy the original system. Indeed $ N\subset K $, for instance the representative vector $ (0,\sqrt{\varepsilon}) $ does not satisfy the second equation of \eqref{vd1change}.
\end{example}

We now turn to the stability conditions.

Example \ref{ex1} shows that, without the condition stating that the relative uncertainty of the coefficient matrix must be smaller than the relative uncertainty of the constant term, a non-singular flexible system may have no solution at all. 
\begin{example} \label{ex1} Let $ \varepsilon\simeq 0,\varepsilon\neq 0 $.  The equation $ (1+\oslash)x\subseteq 1+\varepsilon\pounds $ has no solution. 
\end{example}
The next example shows that if the determinant of the coefficient matrix is an absorber of  the neutrix of the right-hand side, the solution given by the Gauss-Jordan procedure may be not feasible.
\begin{example}\label{notab}  Consider the  system 
	$$\left\{\begin{array}{rrrrrrrrll}
		x_1& +&x_2   &\subseteq & 1& +& \oslash\\
		& & \eps x_2 & \subseteq&   \oslash
	\end{array},\right.$$
	with $ \varepsilon \simeq 0,\varepsilon\neq 0$. The determinant of the coefficient matrix  $\Delta=\eps$ is an absorber of the neutrix of the right-hand side $B=\oslash$. Applying Gauss-Jordan elimination we blow $ B $ up  to $ \oslash/\eps $, and obtain at the right-hand side the neutrix vector $\left(\oslash/\eps, \oslash/\eps\right)^T$, which is obviously not admissible.
\end{example}
The stability conditions are stated in terms of bounds, and as may be expected, they are not minimal. This is illustrated by the final example.

\begin{example}\label{notmin} Consider the following  system 
	$$\left\{\begin{array}{rrrrrrrrll}
		x_1& &   &\subseteq & 1& +& \oslash\\
		& & \eps x_2 & \subseteq&   \oslash
	\end{array},\right.$$
	with $ \varepsilon \simeq 0,\varepsilon\neq 0$. As in Example \ref{notab} the determinant $\Delta=\varepsilon$ is an absorber of $B=\oslash$. Gauss-Jordan elimination only consists in multiplying the second inclusion by $ 1/\varepsilon $, and leads to $\G^P(B)=\left(1+\oslash, \oslash/\eps\right)^T$, which is the solution of the system indeed. 
\end{example}
\section{Preliminary results}\label{preliminary}
In Subsection \ref{excalculus} we recall some useful properties of the calculus with external numbers and matrices, and  Subsection \ref{exex} contains explicit expressions for entries of intermediate matrices of the Gauss-Jordan elimination procedure and the Gauss-Jordan operation matrices.

\subsection{On the calculus of external numbers and matrices}\label{excalculus}

We will consider the modified distributive law for external numbers, some additional properties, and properties of matrix multiplication, in particular modified laws for distributivity and also associativity. We end with some properties concerning the order of magnitude of determinants and minors.

The distributive law holds for the external numbers under fairly general conditions, but in particular it may not hold when multiplying two almost opposite numbers. Unfortunately, this is common practice in the context Gauss-Jordan operations, for we search for zero's or neutrices by annihilating. However subsdistributivity always holds, and this does not affect the inclusions we work with. 

\begin{theorem}{\rm \cite{Dinis}}{(Distributivity with correction term)}\label{tcopti2} Let $\alpha, \beta, \gamma=c+C$ be external numbers. Then 
	\begin{equation}\label{dis}
		\alpha \gamma+\beta \gamma=(\alpha+\beta)\gamma+C \alpha+C \beta.
	\end{equation}  
\end{theorem}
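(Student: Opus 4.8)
The plan is to verify the identity by direct computation, expanding both sides according to the multiplication and addition rules of Definition \ref{defnam}, and then comparing their constant and neutrix parts. Write $\alpha=a+A$ and $\beta=b+B$, so that together with $\gamma=c+C$ all three external numbers are in canonical form. First I would compute the left-hand side. By Definition \ref{defnam}.\ref{nhan},
$$\alpha\gamma=ac+\max\{aC,cA,AC\},\qquad \beta\gamma=bc+\max\{bC,cB,BC\},$$
and adding these by Definition \ref{defnam}.\ref{cong} gives
$$\alpha\gamma+\beta\gamma=(a+b)c+\max\{aC,bC,cA,cB,AC,BC\}.$$

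Next I would compute the right-hand side. Since $\alpha+\beta=(a+b)+\max\{A,B\}$, the product rule yields for $(\alpha+\beta)\gamma$ the constant part $(a+b)c$ and the neutrix part $\max\{(a+b)C,\,c\max\{A,B\},\,C\max\{A,B\}\}=\max\{(a+b)C,cA,cB,AC,BC\}$. For the two correction terms, I would treat $C$ as the neutricial external number $0+C$ and apply the product rule again, obtaining $C\alpha=\max\{aC,AC\}$ and $C\beta=\max\{bC,BC\}$, whence $C\alpha+C\beta=\max\{aC,bC,AC,BC\}$. Collecting everything, the right-hand side has constant part $(a+b)c$ and neutrix part $\max\{(a+b)C,aC,bC,cA,cB,AC,BC\}$.

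Comparing the two neutrix parts, they agree in every term except for the extra summand $(a+b)C$ appearing on the right. The key observation, which I expect to be the only genuine point of the argument, is that $(a+b)C\subseteq aC+bC=\max\{aC,bC\}$: since $C$ is an additive group, every element of the form $(a+b)x$ with $x\in C$ equals $ax+bx\in aC+bC$. Hence the term $(a+b)C$ is absorbed by the terms already present, the two neutrix parts coincide, and the constant parts agree as well, which establishes the identity.

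I would close with a brief remark explaining why the correction term cannot be dropped. The inclusion $(a+b)C\subseteq aC+bC$ can be strict, most dramatically when $a$ and $b$ are almost opposite (for instance $a+b=0$), in which case $(a+b)C=\{0\}$ while $aC+bC=aC$ need not be trivial, since a neutrix is symmetric and $-aC=aC$. The terms $aC,bC$ (and likewise $AC,BC$) that are thereby lost from $(\alpha+\beta)\gamma$ are precisely what $C\alpha+C\beta$ restores, which is the structural reason the correction term is unavoidable in the Gauss-Jordan setting, where one routinely multiplies near-opposite external numbers while annihilating entries.
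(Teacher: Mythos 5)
Your proof is correct. Note that the paper itself gives no proof of Theorem \ref{tcopti2} at all: it is quoted from \cite{Dinis}, so there is no in-paper argument to compare against, and your direct verification from Definition \ref{defnam} is the natural one (essentially the computation behind the cited result). All steps check out: the left-hand side expands to $(a+b)c+\max\{aC,bC,cA,cB,AC,BC\}$; the right-hand side to $(a+b)c+\max\{(a+b)C,aC,bC,cA,cB,AC,BC\}$, using that $c\max\{A,B\}=\max\{cA,cB\}$, that $C\max\{A,B\}=\max\{AC,BC\}$, and that a sum of neutrices equals their maximum, since neutrices are comparable convex subgroups; and the absorption $(a+b)C\subseteq aC+bC=\max\{aC,bC\}$ --- the one genuinely nontrivial point, which you justify correctly --- collapses the two expressions to the same external number. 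One small slip in your closing remark, which does not affect the proof: the terms $AC$ and $BC$ are \emph{not} lost from $(\alpha+\beta)\gamma$, because $C\max\{A,B\}=\max\{AC,BC\}$ already occurs in its expansion; only the term $\max\{aC,bC\}$ can strictly exceed $(a+b)C$, so it is precisely the representative-times-$C$ terms that the correction $C\alpha+C\beta$ restores, as your $a+b=0$ example in fact illustrates.
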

Because a neutrix term is added in the right-hand side of \eqref{dis}, we always have the following form of subdistributivity.

\begin{corollary}{(Subdistributivity)}\label{tcopti1}  
	Let $\alpha, \beta, \gamma$ be external numbers. Then $(\alpha+\beta)\gamma\subseteq\alpha \gamma+\beta \gamma$.
\end{corollary}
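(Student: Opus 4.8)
The plan is to obtain the corollary as an immediate consequence of Theorem \ref{tcopti2}. Writing $\gamma = c + C$, where $C = N(\gamma)$ is a neutrix, the theorem supplies the identity
\[
\alpha\gamma + \beta\gamma = (\alpha+\beta)\gamma + C\alpha + C\beta.
\]
Thus the two sides of the desired inclusion differ precisely by the correction term $C\alpha + C\beta$, and the whole argument reduces to showing that adjoining this term to $(\alpha+\beta)\gamma$ can only enlarge it.

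First I would verify that $C\alpha$ and $C\beta$ are neutrices. Indeed, writing $\alpha = a + A$, Definition \ref{defnam}.\ref{nhan} gives $C\alpha = C(a+A) = Ca + CA$, a Minkowski sum of two neutrices, hence again a neutrix; the same holds for $C\beta$. Since every neutrix is an additive subgroup of $\R$, it contains $0$, and therefore the sum $C\alpha + C\beta$ also contains $0$.

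The final step uses the elementary fact that Minkowski addition of a set containing $0$ is inflationary: for any external number $\delta$ and any set $N$ with $0\in N$ one has $\delta = \delta + \{0\} \subseteq \delta + N$. Applying this with $\delta = (\alpha+\beta)\gamma$ and $N = C\alpha + C\beta$, and then invoking the identity above, yields
\[
(\alpha+\beta)\gamma \subseteq (\alpha+\beta)\gamma + C\alpha + C\beta = \alpha\gamma + \beta\gamma,
\]
which is the claim.

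There is no substantial obstacle here, as the content is entirely carried by Theorem \ref{tcopti2}; the only point requiring (minor) care is the observation that the correction terms are neutrices and hence contain $0$, so that under Minkowski addition they act inflationarily rather than shifting the set away from its original position. I would therefore keep the write-up to two or three lines, essentially quoting the distributivity-with-correction-term identity and remarking that the extra neutricial summand enlarges the left-hand side.
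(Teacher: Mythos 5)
Your proposal is correct and is essentially the paper's own argument: the paper derives the corollary in one line from Theorem \ref{tcopti2}, observing that the right-hand side of \eqref{dis} differs from $(\alpha+\beta)\gamma$ only by an added neutrix term, which (containing $0$) can only enlarge the set under Minkowski addition. Your write-up merely makes explicit the two details the paper leaves implicit, namely that $C\alpha+C\beta$ is a neutrix and that adding a set containing $0$ is inflationary.
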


Theorem \ref{tcopti3} below gives conditions such that the common distributive law holds, i.e. the correction terms figuring in \eqref{dis} may be neglected. To this end we recall the notions of \emph{relative uncertainty} and \emph{oppositeness}. 
\begin{definition}{\rm \cite{Koudjeti Van den Berg, Dinis}}\label{dnbruno}
	Let $\alpha=a+A$ and $  \beta=b+B $ be external numbers and $ C $ be a neutrix.
	\begin{enumerate}
		\item \label{ru}The \emph{relative uncertainty}  $ R(\alpha) $  of $\alpha$ is defined by $ R(\alpha)=A/\alpha $ if $\alpha $ is zeroless, otherwise $ R(\alpha)=\R $.
		\item\label{opp} $\alpha$ and $  \beta$ are \emph{opposite} with respect to $ C $ if $ (\alpha+\beta)C\subset \max (\alpha C, \beta C). $
	\end{enumerate}
\end{definition}

\begin{theorem}\label{tcopti3}Let $\alpha, \beta, \gamma=c+C$ be external numbers. Then $\alpha \gamma+\beta \gamma=(\alpha+\beta)\gamma$ if and only if    
	$R(\gamma)\subseteq \max( R(\alpha), R(\beta))$, or  $\alpha $ and  $\beta $ are not opposite with respect to $ C $. 
\end{theorem}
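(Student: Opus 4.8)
The plan is to reduce everything to the distributivity-with-correction-term identity of Theorem~\ref{tcopti2}. Writing $\alpha=a+A$, $\beta=b+B$, $\gamma=c+C$, that identity gives $\alpha\gamma+\beta\gamma=(\alpha+\beta)\gamma+C\alpha+C\beta$, where $C\alpha+C\beta$ is a neutrix. Since adding a neutrix $M$ to an external number $\xi$ leaves it unchanged exactly when $M\subseteq N(\xi)$, the equality $\alpha\gamma+\beta\gamma=(\alpha+\beta)\gamma$ holds if and only if $C\alpha+C\beta\subseteq N((\alpha+\beta)\gamma)$. First I would expand both neutrices with the Minkowski product rule of Definition~\ref{defnam}, obtaining $C\alpha+C\beta=\max(aC,bC,AC,BC)$ and $N((\alpha+\beta)\gamma)=\max((a+b)C,cA,cB,AC,BC)$; since $AC,BC$ occur on both sides, the criterion becomes the single neutrix inclusion
\begin{equation*}
	\max(aC,bC)\subseteq\max\big((a+b)C,cA,cB,AC,BC\big). \tag{$*$}
\end{equation*}

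Second, I would dispose of the non-oppositeness alternative. Computing $(\alpha+\beta)C$ and $\max(\alpha C,\beta C)$ shows that $\alpha,\beta$ are not opposite with respect to $C$ precisely when $\max(aC,bC)\subseteq\max((a+b)C,AC,BC)$; as this neutrix is contained in the right-hand side of $(*)$, non-oppositeness immediately yields $(*)$, hence distributivity. This settles that non-oppositeness suffices, and therefore that the whole disjunction suffices whenever its first clause holds. It remains to treat the opposite case, where $\max(aC,bC)\not\subseteq\max((a+b)C,AC,BC)$. Assuming $\gamma$ zeroless (so $C\subseteq c\oslash$, whence $AC\subseteq cA$ and $BC\subseteq cB$), and using that scalar neutrices are totally ordered by inclusion, one checks that the terms $(a+b)C,AC,BC$ may all be dropped from the right-hand side of $(*)$: it reduces to $\max(aC,bC)\subseteq\max(cA,cB)$.

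Third, I would show that, in the opposite case, this reduced inclusion is equivalent to $R(\gamma)\subseteq\max(R(\alpha),R(\beta))$. Dividing by $c$ rewrites it as $R(\gamma)=C/c\subseteq\frac{\max(A,B)}{\max(|a|,|b|)}$, so the task is to identify the latter neutrix with $\max(A/|a|,B/|b|)=\max(R(\alpha),R(\beta))$. In general only the inclusion $\frac{\max(A,B)}{\max(|a|,|b|)}\subseteq\max(R(\alpha),R(\beta))$ holds, but the crucial observation is that oppositeness forces near-total cancellation, and hence $|a|\asymp|b|$ (an appreciable ratio): indeed $(a+b)C\subsetneq\max(|a|,|b|)C$ means that $|a+b|/\max(|a|,|b|)$ is an absorber of $C$, which is impossible unless $|a|$ and $|b|$ are of the same order. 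Under $|a|\asymp|b|$ the two neutrices coincide, yielding the equivalence; combined with the logical simplification $(\text{not opp})\vee(\text{opp}\wedge R(\gamma)\subseteq\max)\equiv(\text{not opp})\vee(R(\gamma)\subseteq\max)$, this gives the full biconditional.

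Finally, I would verify the degenerate cases separately so that the convention $R(\cdot)=\R$ of Definition~\ref{dnbruno} is respected: if $\alpha$ or $\beta$ is neutricial one checks directly that $\alpha,\beta$ are never opposite, so distributivity holds and the statement is consistent; if $\gamma$ is neutricial then $R(\gamma)=\R$, the second clause reads $\R\subseteq\max(R(\alpha),R(\beta))$, and $(*)$ is seen to reduce exactly to non-oppositeness, matching the claim. The main obstacle I anticipate is the third step: establishing the \emph{exact} equality $\frac{\max(A,B)}{\max(|a|,|b|)}=\max(R(\alpha),R(\beta))$ under oppositeness, which hinges on extracting the order condition $|a|\asymp|b|$ from the absorber characterization of strict inclusion, while keeping the order-of-magnitude bookkeeping of the neutrix products consistent across all boundary cases.
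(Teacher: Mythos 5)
Your proposal is correct, but there is nothing in the paper to compare it against: the paper states Theorem \ref{tcopti3} without proof, importing it (together with Definition \ref{dnbruno} and Theorem \ref{tcopti2}) from the earlier work of Dinis and van den Berg on algebraic properties of external numbers, where the characterization of distributivity is established. Your argument is a sound self-contained derivation from Theorem \ref{tcopti2} alone. The three pillars all check out: (i) the reduction of distributivity to the neutrix inclusion $\max(aC,bC)\subseteq\max\big((a+b)C,cA,cB,AC,BC\big)$, using that adding a neutrix $M$ to $\xi$ is innocuous exactly when $M\subseteq N(\xi)$; (ii) the identification of non-oppositeness with $\max(aC,bC)\subseteq\max((a+b)C,AC,BC)$, which makes the sufficiency of that clause immediate; and (iii) the key point, that in the opposite case the strict inclusion $(a+b)C\subset\max(|a|,|b|)C$ forces $|a+b|/\max(|a|,|b|)$ to be an absorber of $C$, hence infinitesimal (no appreciable or unlimited number absorbs a neutrix), so $|b|/|a|\simeq 1$; since appreciable scalars leave neutrices invariant, this is exactly what collapses $\max(A,B)/\max(|a|,|b|)$ onto $\max(R(\alpha),R(\beta))$ and turns the reduced inclusion into $R(\gamma)\subseteq\max(R(\alpha),R(\beta))$. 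The total ordering of neutrices under inclusion, which you invoke to discard the dominated terms, is legitimate since neutrices are convex subgroups of $\R$. Your handling of the degenerate cases is also right --- a neutricial $\alpha$ or $\beta$ is never opposite to anything, and a neutricial $\gamma$ gives $R(\gamma)=\R$, in which case the first clause forces $\alpha$ or $\beta$ to be neutricial and hence non-opposite, so the biconditional survives; the only blemish is a harmless slip where you call the relative-uncertainty condition ``the second clause.'' What your route buys, compared with the paper's citation, is a proof that uses only the correction-term identity and the elementary absorber calculus already recalled in Section \ref{preliminary}, so it could serve as an appendix-level proof making the paper self-contained on this point.
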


Simple and important special cases are given by
\begin{equation*}
	(x+N)\beta=x\beta+N\beta \mbox{ and  } x(\alpha+\beta)=x\alpha+x\beta,
\end{equation*}
whenever  $x\in \mathbb{R}$, $N\in \mathcal{N}$ and $ \alpha,\beta \in \E$.

Next proposition lists some useful general properties of external numbers. 
\begin{proposition}\label{tcopt}\cite{Koudjeti Van den Berg} 
	Let $\alpha=a+A$ and $ \gamma $ be a zeroless external numbers, $ B$ be a neutrix and  $n\in \mathbb{N}$ be standard. Then 
	\begin{enumerate}
		
		\item \label{tcoptiv}$\alpha B=aB$ and $	\frac{B}{\alpha}=\frac{B}{a}$.
		\item \label{tcoptid}$N(1/\alpha)=N(\alpha)/\alpha^{2} $.
		\item \label{tcoptix}  $R(\alpha),R(1/\alpha)\subseteq \oslash$. 	
		\item \label{giao} $\alpha		\cap \oslash \alpha=\emptyset$.	
		\item \label{tcoptp} $ N(\alpha\gamma )=\alpha N(\gamma )+ N(\alpha)\gamma $.	
		\item \label{tcoptv}  $N\big((a+A)^n\big)=a^{n-1}A.$		
		\item \label{lei3} If $\alpha $ is limited and is not an absorber of $B$, then $\alpha B=\frac{B}{\alpha}=B.$
	\end{enumerate}
\end{proposition}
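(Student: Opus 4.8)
The plan is to base the entire proof on one structural fact about zeroless numbers and then read off each item as bookkeeping with the Minkowski operations of Definition \ref{defnam}. The key lemma I would isolate first is: if $\alpha=a+A$ is zeroless, then $A\subseteq a\oslash$, equivalently $A/a\subseteq\oslash$ (in particular $a\neq0$). To prove it, observe that $0\notin a+A$ forces $a\notin A$ since neutrices are symmetric, and suppose for contradiction that some $x\in A$ has $x/a$ appreciable. Then there is a standard $k\in\N$ with $|a|\le k|x|=|kx|$; since $A$ is an additive group, $kx\in A$, and convexity of $A$ yields $a\in A$, a contradiction. This is essentially the only step where the defining properties of a neutrix (additive subgroup, convex) and the notion of appreciability enter in an essential way, and I expect it to carry the whole proposition.

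From this lemma I would derive item (\ref{tcoptiv}), which then serves as the workhorse for everything else. Writing $\alpha B=\max\{aB,AB\}$ from the product rule, the inclusion $A\subseteq a\oslash$ gives $AB\subseteq a\oslash B=aB$ using $\oslash B\subseteq B$, so $\alpha B=aB$; applying the same identity to the zeroless number $1/\alpha=1/a+A/a^2$ (representative $1/a$) yields $B/\alpha=(1/\alpha)B=(1/a)B=B/a$.

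Items (\ref{tcoptid})--(\ref{tcoptv}) should then fall out quickly. For (\ref{tcoptid}), $1/\alpha=1/a+A/a^2$ gives $N(1/\alpha)=A/a^2$, while a direct computation shows $\alpha^2=a^2+aA$ is zeroless, so (\ref{tcoptiv}) applied to $\alpha^2$ gives $A/\alpha^2=A/a^2$. Item (\ref{tcoptix}) is immediate: $R(\alpha)=A/\alpha=A/a\subseteq\oslash$ by (\ref{tcoptiv}) and the lemma, and likewise $R(1/\alpha)=(A/a^2)\alpha=A/a\subseteq\oslash$. For (\ref{giao}) I would compute $\oslash\alpha=a\oslash$ (again absorbing $\oslash A\subseteq a\oslash$) and note that every element of $\alpha$ has the form $a(1+\delta)$ with $\delta\in\oslash$, hence lies in $a(1+\oslash)$, whereas every element of $a\oslash$ is infinitesimal after dividing by $a$; the two sets are therefore disjoint. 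Item (\ref{tcoptp}) is the identity $N(\alpha\gamma)=\max\{aC,cA,AC\}$, where by (\ref{tcoptiv}) one has $\alpha N(\gamma)=aC$ and $N(\alpha)\gamma=cA$, so it remains only to absorb $AC\subseteq aC$ via the lemma. Item (\ref{tcoptv}) I would prove by external induction on the standard index $n$, the step being $N(\alpha^{n+1})=\alpha^nN(\alpha)+N(\alpha^n)\alpha=a^nA+a^nA=a^nA$ by (\ref{tcoptp}) and (\ref{tcoptiv}) applied to the zeroless power $\alpha^n$.

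Finally, item (\ref{lei3}) is where I expect the only genuine subtlety, because $\alpha$ limited and zeroless does \emph{not} force $a$ to be appreciable ($a$ may be infinitesimal, as in $\eps+\eps^2\oslash$). By (\ref{tcoptiv}) we have $\alpha B=aB$ and $B/\alpha=(1/a)B$, and since $\alpha$ is limited, $a\in\pounds$, whence $aB\subseteq B$. The hypothesis that $\alpha$ is not an absorber of $B$ means precisely that $aB\subset B$ fails, so together with $aB\subseteq B$ this forces $aB=B$; multiplying $aB=B$ by $1/a$ then gives $(1/a)B=B$ as well, so $\alpha B=B/\alpha=B$. The point I would flag carefully is the equivalence ``$\alpha$ absorbs $B$ iff $aB\subset B$'', which is immediate from $\alpha B=aB$ but is what lets the hypothesis on the external number $\alpha$ be transferred to its real representative $a$; without the ``not an absorber'' clause the conclusion genuinely fails (e.g.\ $\eps\pounds=\oslash\subset\pounds$), so this hypothesis is exactly what the argument needs.
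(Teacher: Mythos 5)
Your proposal is correct, and every step checks out against the paper's definitions (the Minkowski rules of Definition \ref{defnam} and the notion of absorber). There is, however, no in-paper proof to compare it with: Proposition \ref{tcopt} is imported from \cite{Koudjeti Van den Berg} and stated without argument, so your proof fills in material the paper deliberately outsources. Your organization is efficient and matches how these facts are established in the literature: the lemma $A\subseteq a\oslash$ for zeroless $\alpha=a+A$ (proved from symmetry, the group property and convexity of $A$) is exactly the characterization of zerolessness, and is equivalent, via item (\ref{tcoptiv}), to item (\ref{tcoptix}); once it is available, items (\ref{tcoptiv})--(\ref{tcoptv}) are indeed pure bookkeeping ($AB\subseteq aB$, $\alpha^{2}=a^{2}+aA$ zeroless, elements of $\alpha$ of the form $a(1+\delta)$ with $\delta\in\oslash$ versus elements $a\delta'$ of $\oslash\alpha$, and external induction for the standard exponent $n$), and your treatment of item (\ref{lei3}) correctly isolates the one point of substance, namely that the absorber hypothesis on the external number $\alpha$ transfers to its real representative $a$ because $\alpha B=aB$. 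Two small points of precision you should fix. First, in the key lemma the negation of $A\subseteq a\oslash$ yields an $x\in A$ with $x/a$ \emph{not infinitesimal}, which includes the unlimited case and not only the appreciable one; your argument survives verbatim because the inequality $|a|\le k|x|$ then holds already with $k=1$, but the wording ``appreciable'' is too narrow. Second, in item (\ref{lei3}) the inference from ``$\alpha$ limited'' to ``$a\in\pounds$'' deserves one line: every element of $\alpha$ has the form $a(1+\delta)$ with $\delta\in\oslash$, so if some element is limited then so is $a$, since $1+\delta$ is appreciable.
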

Below we give a brief account of some relevant properties of matrices over external numbers. We refer to \cite{Imme nam 3} for more details, proofs and examples.  

The following general property of inclusion is an immediate consequence of the fact that, given external numbers $\alpha, \beta, \gamma$ such that $\alpha\subseteq \beta$, one has $\gamma\alpha\subseteq \gamma \beta$. 
\begin{proposition}\label{matincl}
	Let $\mathcal{A}\in \M_{m, n}(\E)$ and $ \mathcal{B}, \mathcal{C}\in \M_{n, p}(\E)$. If $ \mathcal{B}\subseteq \mathcal{C}$ then $ \mathcal{A}\mathcal{B}\subseteq\mathcal{A}\mathcal{C} $.
\end{proposition}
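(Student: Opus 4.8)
The plan is to reduce the matrix inclusion $\mathcal{A}\mathcal{B} \subseteq \mathcal{A}\mathcal{C}$ to inclusions between individual entries, and then invoke two elementary monotonicity properties of the Minkowski operations on external numbers. Write $\mathcal{A} = [\alpha_{ij}]_{m \times n}$, $\mathcal{B} = [\beta_{jk}]_{n \times p}$ and $\mathcal{C} = [\gamma_{jk}]_{n \times p}$. The hypothesis $\mathcal{B} \subseteq \mathcal{C}$ means, by the definition of inclusion between external matrices given in Subsection \ref{matrices}, that $\beta_{jk} \subseteq \gamma_{jk}$ for all $j,k$ with $1 \leq j \leq n$ and $1 \leq k \leq p$.

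First I would record the scalar monotonicity property stated just above the proposition: if $\beta \subseteq \gamma$ then $\alpha\beta \subseteq \alpha\gamma$ for every external number $\alpha$. This is immediate from the set-theoretic definition of the Minkowski product, since every element $ab$ of $\alpha\beta$, with $a \in \alpha$ and $b \in \beta$, also lies in $\alpha\gamma$ because $b \in \gamma$. Applying this with $\alpha = \alpha_{ij}$ yields $\alpha_{ij}\beta_{jk} \subseteq \alpha_{ij}\gamma_{jk}$ for each $i,j,k$.

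Next I would use monotonicity of the Minkowski sum under inclusion: if $X_j \subseteq Y_j$ for $1 \leq j \leq n$, then $\sum_{j=1}^{n} X_j \subseteq \sum_{j=1}^{n} Y_j$, again immediate since any representative $x_1 + \cdots + x_n$ of the left-hand sum, with $x_j \in X_j$, is a representative of the right-hand sum. Combining the two steps, for each index pair $(i,k)$ the $(i,k)$-entry of the products satisfies
\[
(\mathcal{A}\mathcal{B})_{ik} = \sum_{j=1}^{n} \alpha_{ij}\beta_{jk} \subseteq \sum_{j=1}^{n} \alpha_{ij}\gamma_{jk} = (\mathcal{A}\mathcal{C})_{ik}.
\]
Since this holds for all admissible $i$ and $k$, we conclude $\mathcal{A}\mathcal{B} \subseteq \mathcal{A}\mathcal{C}$ by the definition of matrix inclusion.

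There is essentially no obstacle here; the only point requiring minimal care is that the inclusions point in a single consistent direction throughout, so that no appeal to full distributivity is needed. In particular, since we only ever enlarge sets and never rearrange a product of sums into a sum of products, the subdistributivity of Corollary \ref{tcopti1} and the correction terms of Theorem \ref{tcopti2} play no role: the argument uses solely that Minkowski multiplication by a fixed external number and Minkowski summation are each monotone with respect to inclusion.
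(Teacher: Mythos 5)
Your proof is correct and takes essentially the same approach as the paper, which presents the proposition as an immediate consequence of the scalar monotonicity fact that $\beta\subseteq\gamma$ implies $\alpha\beta\subseteq\alpha\gamma$ — precisely your first step. Your write-up simply makes explicit the entrywise reduction and the monotonicity of Minkowski sums that the paper leaves implicit.
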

Because subdistributivity holds for external numbers, it  also holds for the calculus of matrices of external numbers. Next proposition gives a condition for distributivity.
\begin{proposition}\label{distributivitymatrix} Let $\A=[\alpha_{ij}]_{m\times n}\in \M_{m,n}(\E)$ and $ \B=[\beta_{ij}]_{n\times p}, \C=[\gamma_{ij}]_{n\times p}\in \M_{n,p}(\E)$. If $\max\limits_{\substack{1\leq i\leq m\\1\leq j\leq n}} R(\alpha_{ij})\leq \min\limits_{\substack{1\leq i\leq m\\1\leq j\leq n}}\max \{R(\beta_{ij}), R(\gamma_{ij})\}$,  then  $$\A(\B+\C)= \A\B+\A\C.$$	
\end{proposition}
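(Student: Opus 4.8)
The plan is to reduce the matrix identity to entrywise identities and then invoke the scalar distributivity criterion of Theorem \ref{tcopti3}. First I would fix indices $i$ with $1\leq i\leq m$ and $k$ with $1\leq k\leq p$, and compare the $(i,k)$ entry of $\A(\B+\C)$ with the $(i,k)$ entry of $\A\B+\A\C$. By subdistributivity (Corollary \ref{tcopti1}, lifted to matrices in Proposition \ref{matincl}) one inclusion is automatic, namely $\A(\B+\C)\subseteq \A\B+\A\C$, so the real content is the reverse inclusion. Writing everything out, the $(i,k)$ entry of $\A\B+\A\C$ is $\sum_{j=1}^n \alpha_{ij}\beta_{jk}+\sum_{j=1}^n \alpha_{ij}\gamma_{jk}$, while the $(i,k)$ entry of $\A(\B+\C)$ is $\sum_{j=1}^n \alpha_{ij}(\beta_{jk}+\gamma_{jk})$. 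Since $n$ is standard, I may manipulate these finite sums term by term using the associativity and commutativity of external addition.

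The heart of the argument is that for each individual $j$ the scalar distributive law $\alpha_{ij}\beta_{jk}+\alpha_{ij}\gamma_{jk}=\alpha_{ij}(\beta_{jk}+\gamma_{jk})$ holds. To apply Theorem \ref{tcopti3} in the symmetric shape, I would rewrite this as $\beta_{jk}\alpha_{ij}+\gamma_{jk}\alpha_{ij}=(\beta_{jk}+\gamma_{jk})\alpha_{ij}$, so that $\alpha_{ij}$ plays the role of the common factor $\gamma$ in the theorem; the criterion then reads $R(\alpha_{ij})\subseteq \max\{R(\beta_{jk}),R(\gamma_{jk})\}$. The global hypothesis $\max_{i,j} R(\alpha_{ij})\leq \min_{i,j}\max\{R(\beta_{ij}),R(\gamma_{ij})\}$ is precisely designed to furnish this for every index combination simultaneously: for the relevant entries one has $R(\alpha_{ij})\leq \max_{i',j'}R(\alpha_{i'j'})\leq \min_{i',j'}\max\{R(\beta_{i'j'}),R(\gamma_{i'j'})\}\leq \max\{R(\beta_{jk}),R(\gamma_{jk})\}$, and since relative uncertainties are neutrices, comparison by $\leq$ coincides with inclusion. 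Hence Theorem \ref{tcopti3} applies and each term splits.

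Once the $n$ individual products distribute, I would sum the $n$ resulting equalities. Here I must be slightly careful: external addition of the two full sums $\sum_j \alpha_{ij}\beta_{jk}$ and $\sum_j \alpha_{ij}\gamma_{jk}$ into $\sum_j(\alpha_{ij}\beta_{jk}+\alpha_{ij}\gamma_{jk})$ uses only commutativity and associativity of the additive semigroup, which hold unconditionally in a $CAS$, so no further distributivity correction terms appear at this outer stage. Collecting the per-$j$ identities then yields equality of the two $(i,k)$ entries, and since $i,k$ were arbitrary the matrices agree entrywise, giving $\A(\B+\C)=\A\B+\A\C$.

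The main obstacle I anticipate is the bookkeeping that matches the various maxima and minima to the single scalar condition of Theorem \ref{tcopti3}; one must check that the quantifier $\min_{i,j}$ in the hypothesis ranges over enough indices to dominate $\max\{R(\beta_{jk}),R(\gamma_{jk})\}$ for the particular column $k$ in play, and that switching between the order relation $\leq$ on neutrices and genuine set inclusion is legitimate (it is, since the relevant objects are neutrices, where $\leq$ and $\subseteq$ coincide by the remarks following the definition of the order). Beyond that, the finiteness and standardness of $n$, $m$, $p$ mean no appeal to external induction or nonstandard subtleties is needed, so the remainder is routine external-number algebra.
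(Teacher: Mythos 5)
Your proof is correct. Note that the paper itself contains no proof of Proposition \ref{distributivitymatrix}: it is stated in the survey of matrix calculus in Subsection \ref{excalculus} and the proofs are explicitly deferred to \cite{Imme nam 3}, so there is no in-paper argument to compare against. Your route --- reduce to the $(i,k)$ entry, rearrange the finite Minkowski sums by unconditional associativity and commutativity of addition, and split each term $\alpha_{ij}\beta_{jk}+\alpha_{ij}\gamma_{jk}$ via Theorem \ref{tcopti3} with $\alpha_{ij}$ as the common factor --- is the natural argument and is sound, including the two points you flag: the relative uncertainties involved are neutrices, for which the order $\leq$ coincides with inclusion, so the hypothesis yields exactly the criterion $R(\alpha_{ij})\subseteq\max\{R(\beta_{jk}),R(\gamma_{jk})\}$ for every index combination (once the proposition's slightly sloppy index ranges on $\B$ and $\C$ are read in the intended way, namely over all entries of those $n\times p$ matrices).
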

For subassociativity to hold conditions are needed, and associativity holds under stronger conditions.
\begin{proposition}\label{subassociativity}
	Let $\mathcal{A}\in \M_{m, n}(\E), \mathcal{B}\in \M_{n, p}(\E)$ and $ \mathcal{C}\in \M_{p, q}(\E)$. Then
	\begin{enumerate}
		\item \label{inclusion1}  $(\mathcal{A}\B)\mathcal{C}\subseteq \mathcal{A}(\mathcal{B}\mathcal{C}) \mbox{ if $\mathcal{A}$ is a real matrix or $\mathcal{B}, \mathcal{C}$ are both non-negative}.$
		\item  \label{inclusion2}  $\mathcal{A}(\mathcal{B}\mathcal{C})\subseteq (\mathcal{A}\mathcal{B})\mathcal{C} \mbox{ if $\mathcal{C}$ is a real matrix or $\mathcal{A}, \mathcal{B}$ are both non-negative}.$
	\end{enumerate}
\end{proposition}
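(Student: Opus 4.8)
The plan is to reduce the assertion to a single entry and to compare both parenthesizations against one common, fully expanded triple sum. Fix indices $i$ with $1\leq i\leq m$ and $l$ with $1\leq l\leq q$. Since matrix products and inclusions of matrices are computed entrywise, it suffices to compare the $(i,l)$-entries
\[
	L\equiv \sum_{k=1}^{p}\Big(\sum_{j=1}^{n}\alpha_{ij}\beta_{jk}\Big)\gamma_{kl},\qquad R\equiv \sum_{j=1}^{n}\alpha_{ij}\Big(\sum_{k=1}^{p}\beta_{jk}\gamma_{kl}\Big).
\]
Because the multiplicative semigroup of external numbers is associative, each triple product $\alpha_{ij}\beta_{jk}\gamma_{kl}$ is unambiguous, and because addition is commutative and associative the double sum $T\equiv \sum_{j=1}^{n}\sum_{k=1}^{p}\alpha_{ij}\beta_{jk}\gamma_{kl}$ is well defined independently of any grouping. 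The entire argument rests on the dichotomy: under each of the stated hypotheses one of $L,R$ equals $T$ while the other is always contained in $T$.

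First I would record two auxiliary facts, each obtained from its two-term version by a finite external induction on the number of summands, using monotonicity of multiplication (the property stated just before Proposition \ref{matincl}) together with commutativity. From subdistributivity (Corollary \ref{tcopti1}) one gets $\big(\sum_j \delta_j\big)\gamma \subseteq \sum_j \delta_j\gamma$ and $\gamma\big(\sum_j \delta_j\big)\subseteq \sum_j \gamma\delta_j$ for arbitrary external numbers; applying these to $L$ and to $R$ yields $L\subseteq T$ and $R\subseteq T$ \emph{unconditionally}. For the reverse inclusions genuine distributivity is needed, and this is supplied by Theorem \ref{tcopti3}: $\big(\sum_j \delta_j\big)\gamma = \sum_j \delta_j\gamma$ holds whenever either $\gamma$ is real (so the real-scalar special case following Theorem \ref{tcopti3} applies termwise) or the summands $\delta_j$ are pairwise non-opposite with respect to $N(\gamma)$. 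In the inductive step the non-oppositeness hypothesis is preserved precisely because a sum of non-negative external numbers is again non-negative, and non-negative numbers are never opposite, oppositeness requiring cancellation (Definition \ref{dnbruno}) which cannot occur without a sign change.

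With these in hand the four cases are immediate. For part (\ref{inclusion1}): if $\A$ is real then each $\alpha_{ij}$ is a real scalar, so distributing $\alpha_{ij}$ over the inner sum gives $R=T$, whence $L\subseteq T=R$; if instead $\B,\C$ are non-negative then every $\beta_{jk}\gamma_{kl}$ is non-negative, hence the inner summands are pairwise non-opposite with respect to $N(\alpha_{ij})$, so again $R=T$ and $L\subseteq T=R$. Part (\ref{inclusion2}) is perfectly symmetric: if $\C$ is real then $\gamma_{kl}$ distributes over $\sum_j \alpha_{ij}\beta_{jk}$, giving $L=T$ and therefore $R\subseteq T=L$; and if $\A,\B$ are non-negative then each $\alpha_{ij}\beta_{jk}$ is non-negative and non-opposite with respect to $N(\gamma_{kl})$, forcing $L=T$ and $R\subseteq T=L$.

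The main obstacle is the passage from the two-summand distributivity of Theorem \ref{tcopti3} to arbitrary finite sums while keeping the controlling hypothesis alive at every stage. The real-scalar case is routine, but the non-negative case requires verifying that the partial sums remain non-negative, so that non-oppositeness is inherited through the induction; this is exactly where the sign assumptions on $\B,\C$ (respectively $\A,\B$) are consumed, and it explains why mixed-sign matrices must be excluded. A secondary point worth spelling out is that associativity of the scalar product is what makes $T$ independent of the parenthesization of each triple term, which is what legitimizes comparing both $L$ and $R$ to the same object.
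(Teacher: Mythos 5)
The paper contains no proof of Proposition \ref{subassociativity} to compare against: it appears in a survey of matrix properties whose proofs are deferred to \cite{Imme nam 3}. Judged on its own merits, your argument is correct, and its architecture is the natural one. Reducing both parenthesizations entrywise to the common expanded sum $T=\sum_{j}\sum_{k}\alpha_{ij}\beta_{jk}\gamma_{kl}$ (well defined by associativity and commutativity of the scalar operations), you get $L\subseteq T$ and $R\subseteq T$ unconditionally from Corollary \ref{tcopti1} together with monotonicity of addition and multiplication under inclusion (the property stated before Proposition \ref{matincl}), both extended to finitely many summands by external induction; the hypotheses of each part are then spent exactly once, to upgrade the appropriate inclusion to an equality via the real-scalar special case of distributivity or via Theorem \ref{tcopti3}. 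Commutativity of multiplication correctly supplies the left-handed versions of these laws where you need them. This cleanly explains the asymmetry between the two parts and why mixed signs must be excluded.

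The one step you should not leave at the level of a slogan is the claim that non-negative external numbers are never opposite with respect to any neutrix $C$; ``oppositeness requires a sign change'' is the right intuition but is not a proof. It can be checked directly from Definition \ref{dnbruno}. First note that a non-negative external number is either a neutrix or admits a representative $\geq 0$ (if $\delta=d+D$ is non-negative with $d<0$, convexity of $D$ forces $d\in D$, so $\delta=D$). Writing $\delta_1=d_1+D_1$, $\delta_2=d_2+D_2$ with $d_1,d_2\geq 0$, one has
\begin{equation*}
(\delta_1+\delta_2)C=\max\big((d_1+d_2)C,\,D_1C,\,D_2C\big)\supseteq \max\big(d_1C,\,d_2C,\,D_1C,\,D_2C\big)=\max(\delta_1C,\delta_2C),
\end{equation*}
because $d_1+d_2\geq\max(d_1,d_2)\geq 0$ implies $(d_1+d_2)C\supseteq \max(d_1C,d_2C)$. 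Hence the strict inclusion $(\delta_1+\delta_2)C\subset\max(\delta_1C,\delta_2C)$ demanded by oppositeness can never occur. With this verification, and with the observation you do make explicitly --- that partial sums of non-negative numbers remain non-negative, so non-oppositeness survives each inductive step --- your proof is complete.
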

\begin{proposition} \label{combinetheorem}
	Let $\mathcal{A}\in \M_{m, n}(\E), \mathcal{B}\in \M_{n, p}(\E)$ and $ \mathcal{C}\in \M_{p, q}(\E)$. Then $\mathcal{A}(\mathcal{B}\mathcal{C})=(\mathcal{A}\mathcal{B})\mathcal{C}$ if one of the following conditions is satisfied:
	\begin{enumerate}
		\item \label{associate of product of matrix} $\mathcal{A}$ and $\mathcal{C}$ are both real matrices.
		\item \label{associate of product of matrix2}  $\mathcal{B}$ is a neutricial matrix.
		\item \label{nonnegass} $\mathcal{A},\mathcal{B},\mathcal{C}$ are all non-negative matrices.
	\end{enumerate}
\end{proposition}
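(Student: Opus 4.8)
The plan is to handle the three cases separately. Cases \ref{associate of product of matrix} and \ref{nonnegass} reduce immediately to the two one-sided subassociativity inclusions already established in Proposition \ref{subassociativity}, while case \ref{associate of product of matrix2} --- where $\mathcal{A}$ and $\mathcal{C}$ are unrestricted and only $\mathcal{B}$ is neutricial --- lies outside the scope of that proposition and will require a direct entrywise argument. The latter is the main obstacle.

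In case \ref{associate of product of matrix}, $\mathcal{A}$ real gives $(\mathcal{A}\mathcal{B})\mathcal{C}\subseteq \mathcal{A}(\mathcal{B}\mathcal{C})$ by Proposition \ref{subassociativity}.\ref{inclusion1}, and $\mathcal{C}$ real gives the reverse inclusion by Proposition \ref{subassociativity}.\ref{inclusion2}; the two together yield equality. In case \ref{nonnegass}, non-negativity of $\mathcal{B},\mathcal{C}$ gives the first inclusion via part \ref{inclusion1}, and non-negativity of $\mathcal{A},\mathcal{B}$ gives the second via part \ref{inclusion2}, again producing equality. No computation beyond citing Proposition \ref{subassociativity} twice is needed.

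For case \ref{associate of product of matrix2} I would compare the matrices entry by entry. Writing $\mathcal{A}=[\alpha_{ij}]$ with $\alpha_{ij}=a_{ij}+A_{ij}$, $\mathcal{B}=[B_{jk}]$ with each $B_{jk}$ a neutrix, and $\mathcal{C}=[\gamma_{kl}]$ with $\gamma_{kl}=c_{kl}+C_{kl}$, both $\mathcal{A}\mathcal{B}$ and $\mathcal{B}\mathcal{C}$ are neutricial, so every product appearing is a finite sum of neutrices. The argument rests on two scalar facts. First, a neutrix sandwiched between two external numbers associates: for external $\alpha=a+A$, $\gamma=c+C$ and any neutrix $N$, both $(\alpha N)\gamma$ and $\alpha(N\gamma)$ evaluate, via Definition \ref{defnam}, to $\max\{|a||c|N,\,|c|AN,\,|a|NC,\,ANC\}$, so that $(\alpha N)\gamma=\alpha(N\gamma)$ with no zerolessness assumption. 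Second, multiplication distributes over sums of neutrices: since a sum of neutrices equals their largest member and multiplication preserves inclusions, one has $\left(\sum_j N_j\right)\gamma=\sum_j N_j\gamma$ and $\alpha\left(\sum_k N_k\right)=\sum_k \alpha N_k$.

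With these in hand the $(i,l)$ entries agree: from $((\mathcal{A}\mathcal{B})\mathcal{C})_{il}=\sum_k\left(\sum_j \alpha_{ij}B_{jk}\right)\gamma_{kl}$ I would distribute $\gamma_{kl}$ through the inner sum, replace each term using $(\alpha_{ij}B_{jk})\gamma_{kl}=\alpha_{ij}(B_{jk}\gamma_{kl})$, interchange the order of the finite double sum of neutrices, and finally factor $\alpha_{ij}$ back out to reach $\sum_j \alpha_{ij}\left(\sum_k B_{jk}\gamma_{kl}\right)=(\mathcal{A}(\mathcal{B}\mathcal{C}))_{il}$. The delicate point, and the reason the neutricial hypothesis on $\mathcal{B}$ is exactly what is needed, is the scalar identity $(\alpha N)\gamma=\alpha(N\gamma)$: ordinary external numbers may fail associativity, but inserting a neutrix collapses both triple products to the same symmetric maximum of four terms, restoring it.
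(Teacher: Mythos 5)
Your proof is correct, but there is nothing in the paper to compare it against: Proposition \ref{combinetheorem} is recalled without proof, the text deferring to the cited reference on matrix calculus with external numbers for ``more details, proofs and examples''. Your handling of cases \ref{associate of product of matrix} and \ref{nonnegass} --- pairing the two one-sided inclusions of Proposition \ref{subassociativity}, with the hypotheses matched exactly as you describe --- is the evident intended argument. Your direct entrywise argument for case \ref{associate of product of matrix2} is also sound: each product $\alpha_{ij}B_{jk}$ and $B_{jk}\gamma_{kl}$ is a neutrix, a finite sum of neutrices is their maximum under inclusion, multiplication preserves inclusions (the fact stated just before Proposition \ref{matincl}), and both bracketings of the triple product with a neutrix in the middle collapse to $\max\{|a||c|N,\,|c|AN,\,|a|NC,\,ANC\}$ by Definition \ref{defnam}.

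One conceptual correction, which does not affect validity: your closing claim that ordinary external numbers may fail associativity is false. Multiplication of scalar external numbers is always associative --- the paper describes the CAS structure as built on a commutative multiplicative semigroup, and one checks from Definition \ref{defnam} that both $(\alpha\beta)\gamma$ and $\alpha(\beta\gamma)$ equal $abc+\max\{abC,\,acB,\,bcA,\,aBC,\,bAC,\,cAB,\,ABC\}$. So your identity $(\alpha N)\gamma=\alpha(N\gamma)$ holds for free, with no neutricial hypothesis needed. What genuinely requires $\mathcal{B}$ to be neutricial are the two distributivity steps --- pulling $\gamma_{kl}$ into the sum $\sum_j \alpha_{ij}B_{jk}$ and pulling $\alpha_{ij}$ out of $\sum_k B_{jk}\gamma_{kl}$ --- since for general external entries only the subdistributivity of Corollary \ref{tcopti1} holds, and that is exactly where associativity of matrix multiplication breaks down. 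Your proof does establish these distributivity steps correctly (sums of neutrices are maxima, and multiplication preserves inclusion), so the argument stands; only the diagnosis of where the hypothesis is essential should be amended.
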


In the final part we relate some orders of magnitude of determinants of limited and reduced matrices and its minors.

To start with, it is easily proved that the determinant of a limited matrix is limited, as are its minors. The neutrix of these determinants does not exceed the biggest neutrix of the entries.	

\begin{proposition}
	\label{cdt} Let $n\in \mathbb{N}$ be standard and $\mathcal{A}%
	\in \mathcal{M}_{n}(\mathbb{E})$ be limited. Then there exists a limited number $ L>0 $ such that whenever 
	$k\in \{1,\dots, n\}$ and $1\leq i_1<\dots<i_k\leq n,\ 1\leq
	j_1<\dots<j_k\leq n$
	\begin{equation*}
		|m^{i_1\dots i_k}_{j_1\dots j_k}|\leq L.
	\end{equation*}
	In particular $ |\Delta|\leq L$. Moreover $N(\Delta)\subseteq \oA.$
\end{proposition}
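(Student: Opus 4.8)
The plan is to prove the three claims in sequence, all resting on the expansion of a $k \times k$ minor as a signed sum of products of entries. First I would fix the standard natural number $n$ and recall that every minor $m^{i_1\dots i_k}_{j_1\dots j_k}$ is, by definition \eqref{defminor}, a determinant, hence a sum $\sum_{\sigma}\sgn(\sigma)\prod_{\ell=1}^{k}\alpha_{i_\ell j_{\sigma(\ell)}}$ over the $k!$ permutations $\sigma$ of $\{1,\dots,k\}$. Since $\mathcal{A}$ is limited, each entry satisfies $|\alpha_{ij}|\subseteq \pounds$, so there is a standard (hence limited) bound on each representative; the product of $k\leq n$ limited external numbers is limited, and a sum of $k!\leq n!$ limited external numbers is limited because $n$ is standard (so $n!$ is standard and $\pounds$ is closed under standard-length sums). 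This yields a single limited $L>0$ dominating every minor simultaneously, using that there are only finitely many (standardly many) choices of index sets.

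The bound $|\Delta|\leq L$ is then immediate, since $\Delta$ is itself the minor $m^{1\dots n}_{1\dots n}$ and is covered by the uniform bound. For the final assertion $N(\Delta)\subseteq \overline{A}$, I would analyse the neutrix part of the determinant expansion. Each product $\prod_{\ell}\alpha_{i_\ell j_{\sigma(\ell)}}$ has, by Definition \ref{defnam}.\ref{nhan} applied iteratively, a neutrix part generated by terms in which one factor contributes its neutrix $A_{i_\ell j_{\sigma(\ell)}}$ and the remaining factors contribute limited representatives; every such term is therefore of the form $(\text{limited})\cdot A_{ij}$, which is contained in $\pounds \overline{A}=\overline{A}$ since $\overline{A}=\max_{i,j}A_{ij}$ absorbs multiplication by limited numbers. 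Summing over the standardly many permutations and index tuples keeps us inside $\overline{A}$, because $\overline{A}$ is a neutrix (a group) and hence closed under standard-length sums, giving $N(\Delta)\subseteq\overline{A}$.

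The main obstacle, and the point requiring care rather than routine calculation, is the neutrix bookkeeping in the product of several external numbers: the iterated multiplication produces mixed neutrix products $A_{i_\ell j}A_{i_m j'}$, and I must argue these are still absorbed into $\overline{A}$. Here I would invoke that for limited $\mathcal{A}$ each $A_{ij}\subseteq\oslash\subseteq\pounds$, so a product $A_{ij}A_{i'j'}\subseteq \pounds\, A_{i'j'}\subseteq\overline{A}$; thus even the higher neutrix-product terms stay within $\overline{A}$. The delicate step is ensuring the distributive and associative manipulations used to collect the neutrix part are legitimate; since the representatives are fixed real numbers (so no genuine opposition arises at the level of representatives) and we only claim an inclusion, subdistributivity (Corollary \ref{tcopti1}) and the subassociativity of Proposition \ref{subassociativity} suffice, and no exact distributive hypothesis is needed. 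I would close by noting that all inductions on $k$ are over standard $k\leq n$ and so are justified by external induction.
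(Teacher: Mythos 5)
Your proof is correct: the paper itself gives no proof of Proposition \ref{cdt} (it is described beforehand as ``easily proved'', with details deferred to \cite{Imme nam 3}), and your argument --- expanding each minor as a standardly-finite signed sum of products, bounding representatives by limitedness, and controlling neutrix parts via $\pounds\,\overline{A}=\overline{A}$ and the absorption of mixed products $A_{ij}A_{i'j'}\subseteq\overline{A}$ --- is precisely the routine computation intended, matching the paper's own style in the analogous calculation of Lemma \ref{bd3.1}. The only superfluous point is your appeal to Proposition \ref{subassociativity}, which concerns matrix products: the Minkowski product of scalar external numbers is associative outright, so no such care is needed at the scalar level.
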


The last property plays an important part in our approach to error analyis, and says that at least one the minors $\Delta_{i,j}$, obtained by eliminating row $ i $ and column $ j $ from the matrix $ \A $ for some $i, j$ with $ 1\leq i,j\leq n $, is of the same order of magnitude as the determinant. It is a consquence of the fact that the Laplace expansion holds with inclusions. 
\begin{proposition}{\rm }\label{lei2} Let $\A\in \M_{n}(\E)$ be a  reduced square matrix of order $n$. Suppose that $\Delta$ is zeroless. Then for each $j\in \{1, \dots, n\}$ there exists  $i\in \{1, \dots,n\}$ such that $|\Delta_{i,j}|>\oslash\Delta$.
\end{proposition}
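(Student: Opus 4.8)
The plan is to run the ordinary (real) Laplace expansion on a representative matrix, to locate by a pigeonhole argument a minor whose representative has the same order of magnitude as the determinant, and then to use the zerolessness of $\Delta$ to control the neutrix of the corresponding external minor. Concretely, fix a reduced representative matrix $P=[a_{ij}]$ of $\A$ and put $d=\det P$, so that $d\in\Delta$ is a representative and, for each $i$, the real minor $m_{ij}=\det P_{ij}$ is a representative of $\Delta_{i,j}$. For real matrices the cofactor expansion along column $j$ is an exact identity, $d=\sum_{i=1}^n(-1)^{i+j}a_{ij}m_{ij}$, and since $\A$ is reduced we have $|a_{ij}|\le 1$, so $|d|\le\sum_{i=1}^n|a_{ij}m_{ij}|\le n\max_i|a_{ij}m_{ij}|$. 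Choosing $i_0$ to realise this maximum gives $|a_{i_0j}\,m_{i_0j}|\ge|d|/n$; as $\Delta$ is zeroless $d\ne 0$, so $a_{i_0j}\ne 0$, and $|a_{i_0j}|\le 1$ yields $|m_{i_0j}|\ge|d|/n$. Since $n$ is standard and $\oslash\Delta=\oslash d$ by Proposition \ref{tcopt}.\ref{tcoptiv}, the real number $|m_{i_0j}|$ already exceeds every element of $\oslash\Delta$.

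The delicate point is to upgrade this statement about the representative $m_{i_0j}$ to the external number $|\Delta_{i_0,j}|=|m_{i_0j}|+N(\Delta_{i_0,j})$, that is, to control the neutrix $N(\Delta_{i_0,j})$. Here I would use two facts. First, zerolessness of $\Delta$ gives $N(\Delta)\subseteq\oslash\Delta=\oslash d$, which is precisely $R(\Delta)\subseteq\oslash$ (Proposition \ref{tcopt}.\ref{tcoptix}). Second, the Laplace expansion holds with inclusions: subdistributivity turns the factored cofactor expansion into a subset of the fully expanded determinant (Corollary \ref{tcopti1}), so that $\sum_{i}(-1)^{i+j}\alpha_{ij}\Delta_{i,j}\subseteq\Delta$, whence the neutrix of each summand is absorbed into $N(\Delta)$; in particular $a_{i_0j}N(\Delta_{i_0,j})\subseteq N(\Delta)\subseteq\oslash d$. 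Combining this with $|d|\le n\,|a_{i_0j}m_{i_0j}|$ and dividing by $a_{i_0j}\ne 0$ yields $N(\Delta_{i_0,j})\subseteq\oslash|m_{i_0j}|$. Thus $\Delta_{i_0,j}$ is itself zeroless with representative of order $|m_{i_0j}|\ge|d|/n$, and since its neutrix is infinitesimal relative to $|m_{i_0j}|$ while $|m_{i_0j}|$ already dominates $\oslash d$, every element of $|\Delta_{i_0,j}|$ exceeds every element of $\oslash\Delta$; hence $|\Delta_{i_0,j}|>\oslash\Delta$, as required.

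I expect the neutrix bookkeeping of the second paragraph to be the crux. The naive bound $N(\Delta_{i_0,j})\subseteq\oA$ from Proposition \ref{cdt} is too weak when $\Delta$ is infinitesimal, since then $\oA$ may be far larger than $\oslash\Delta$, and a minor with a large neutrix could a priori fail the strict inequality by overlapping $\oslash\Delta$. The resolution is that the factor $|a_{i_0j}|$ cancels between the lower bound on $|m_{i_0j}|$ and the upper bound on $N(\Delta_{i_0,j})$ furnished by the Laplace inclusion, so that selecting $i_0$ as the index maximising $|a_{ij}m_{ij}|$ (rather than merely $|m_{ij}|$) makes the estimate uniform and removes any case distinction on the size of $a_{i_0j}$.
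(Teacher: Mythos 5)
Your proof is correct, and it takes exactly the route the paper itself indicates: the paper states Proposition \ref{lei2} without a detailed proof, remarking only that it ``is a consequence of the fact that the Laplace expansion holds with inclusions'' (deferring details to the cited reference \cite{Imme nam 3}). Your argument is a fleshed-out version of that remark --- the real cofactor expansion plus pigeonhole to locate the dominant term $a_{i_0j}m_{i_0j}$, and the external Laplace expansion with inclusions to bound $N(\Delta_{i_0,j})$ --- and the cancellation of the factor $a_{i_0j}$ that you highlight is precisely the bookkeeping needed to make the conclusion hold when $\Delta$ is infinitesimal.
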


\subsection{Explicit expressions for the Gauss-Jordan operations}\label{exex}

We will use explicit expressions for the Gauss-Jordan operation matrices and the intermediate matrices. These are given in terms of quotients of minors, for which we recall the convenient notation of \cite{Gantmacher}. Proofs can be found in \cite{Li}, in a different notation, and in \cite{NJI1}. In particular a pivot is always given in the form of a quotient of principal minors, of which the order of magnitude can be determined with the methods of Subsection \ref{excalculus}. At the end we consider the inverse Gauss-Jordan procedure.

\begin{theorem}[Explicit expressions for Gauss-Jordan operations]\label{Gmat}Let \linebreak $\A=[a_{ij}]_{n\times n}\in \M_{n}(\mathbb{R}) $ be  diagonally eliminable. For $ k<n $ the Gaussian elimination matrix of odd order
	$ 	\mathcal{G}_{2k+1}=[	g_{ij}^{(2k+1)}]_{n\times n}$ 	satisfies
	\begin{equation}\label{Gmatodd}
		g_{ij}^{(2k+1)}=%
		\begin{cases}
			\quad 1 & \mbox{ if }i=j\not=k+1 \\ 
			\quad 0 & \mbox{ if }i\not=j \\ 
			\dfrac{m_{k}}{m_{k+1}} & \mbox{ if }i=j=k+1%
		\end{cases}%
	\end{equation}
	and the Gaussian elimination matrix of even order
	$
	\begin{array}{rr}
		\mathcal{G}_{2k+2}=%
		\begin{bmatrix}
			g_{ij}^{(2k+2)}%
		\end{bmatrix}%
		_{n\times n} 
	\end{array}%
	$ satisfies
	\begin{equation*}
		g_{ij}^{(2k+2)}=%
		\begin{cases}\qquad \qquad \qquad 0 & \mbox{ if }j\not \in \{i,k+1\} \\ 
			\qquad \qquad \qquad 1 & \mbox{ if }j=i \\ 
			(-1)^{k+i+1} \dfrac{m^{1\dots k}_{1\dots i-1i+1\dots k+1}}{m_{k}}%
			\medskip  & \mbox{ if }1\leq i\leq k,j= k+1 \\ 
			\qquad \qquad -\dfrac{m^{1\dots ki}_{1\dots kj}}{m_{k}} & \mbox{ if }k+1<i \leq n,j= k+1%
		\end{cases}.%
	\end{equation*}
	
\end{theorem}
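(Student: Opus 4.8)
The plan is to prove Theorem \ref{Gmat} by induction on $k$, following the inductive construction of the intermediate matrices in Definition \ref{defgau}. The key observation is that the entries of $\mathcal{G}_{2k+1}$ and $\mathcal{G}_{2k+2}$ are determined by the entries of the intermediate matrix $\A^{(2k)}$ (for the odd matrix) and $\A^{(2k+1)}$ (for the even matrix), so the real content is to establish closed-form expressions for these intermediate entries $a^{(2k)}_{ij}$ and $a^{(2k+1)}_{ij}$ in terms of quotients of minors of the \emph{original} matrix $\A$, and then read off the $g_{ij}$ from \eqref{defgodd} and \eqref{defgeven}. Concretely, for the odd case I would show that the pivot satisfies $a^{(2k)}_{k+1\,k+1}=m_{k+1}/m_{k}$, whence \eqref{defgodd} immediately gives $g^{(2k+1)}_{k+1\,k+1}=1/a^{(2k)}_{k+1\,k+1}=m_{k}/m_{k+1}$; the other entries of $\mathcal{G}_{2k+1}$ are fixed directly by the definition.

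First I would set up the inductive hypothesis asserting a formula for all entries $a^{(2k)}_{ij}$ of the matrix after $2k$ Gauss--Jordan steps. The natural candidate, in the minor notation of Notation \ref{kyhieu2}, is that after the first $k$ columns have been cleared, the surviving block ($i,j>k$) has entries of the shape $a^{(2k)}_{ij}=m^{1\cdots k\,i}_{1\cdots k\,j}/m_{k}$, the already-processed diagonal entries equal $1$, and the cleared off-diagonal entries in the first $k$ columns are $0$. This is the standard fact that Gaussian elimination turns entries into ratios of minors (a Schur-complement identity); the cleanest route is to invoke the known result, cited here to \cite{Li} and \cite{NJI1}, that the entries of the intermediate matrices have exactly this form, and then simply substitute into \eqref{defgeven}. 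For the even matrix, the nonzero off-diagonal entries of $\mathcal{G}_{2k+2}$ are $-a^{(2k+1)}_{i\,k+1}$ by \eqref{defgeven}, so I would need the value of the $(i,k+1)$ entry of $\A^{(2k+1)}=\mathcal{G}_{2k+1}\A^{(2k)}$, i.e. the pivot row after normalization.

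The computation then splits into the two ranges appearing in the statement. For $k+1<i\le n$ (entries below the pivot), multiplying row $k+1$ by $1/a^{(2k)}_{k+1\,k+1}=m_k/m_{k+1}$ and using $a^{(2k)}_{i\,k+1}=m^{1\cdots k\,i}_{1\cdots k\,k+1}/m_k$ gives, after the normalization that replaces the $(k+1)$-th column minor by the principal minor, $a^{(2k+1)}_{i\,k+1}=m^{1\cdots k\,i}_{1\cdots k\,k+1}/m_{k+1}$; but one must be careful here, since the even-matrix formula quotes $m^{1\cdots k\,i}_{1\cdots k\,j}/m_k$ rather than $/m_{k+1}$, so the bookkeeping of which intermediate matrix supplies the entry (the pre-normalization matrix $\A^{(2k)}$ versus the post-normalization $\A^{(2k+1)}$) is exactly what pins down the denominator. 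For $1\le i\le k$ (entries above the pivot, affecting already-normalized rows) the sign factor $(-1)^{k+i+1}$ and the minor $m^{1\cdots k}_{1\cdots i-1\,i+1\cdots k+1}$ arise from a cofactor/Laplace expansion of the relevant $(k+1)\times(k+1)$ minor along its last row; I would obtain this by expanding the minor $m^{1\cdots k\,?}_{\cdots}$ and matching the cofactor sign.

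The main obstacle I anticipate is the careful sign and index bookkeeping in the cofactor expansion for the range $1\le i\le k$: keeping straight which column index is deleted, the resulting permutation sign $(-1)^{k+i+1}$, and the fact that the denominator is $m_k$ (not $m_{k+1}$) for these entries while the below-pivot entries carry a different normalization. A secondary subtlety is that these manipulations take place over $\mathbb{R}$ for the operation matrices (which have real coefficients by design), so ordinary field distributivity and the classical minor identities apply without the external-number corrections of Subsection \ref{excalculus}; I would make this explicit at the outset so that the induction step is a genuine equality and not merely an inclusion. Provided the base case $k=0$ (where $\A^{(0)}=\A$, $m_0=1$, and the formulas reduce to the original Definition \ref{defgau}) and the minor-identity step are verified, the induction closes and both displayed formulas follow.
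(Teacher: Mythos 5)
Your overall strategy is the natural one and, in fact, the only one available from inside the paper: the paper itself gives \emph{no} proof of Theorem \ref{Gmat}, referring for proofs to \cite{Li} and \cite{NJI1}, and the intended derivation is exactly what you outline, namely to take the explicit formula \eqref{congthucquynapcuaA} for the intermediate entries $a^{(2k)}_{ij}$ (Theorem \ref{cmcttruyhoi}) and substitute it into the defining relations \eqref{defgodd} and \eqref{defgeven}. Your odd case is correct as stated: $a^{(2k)}_{k+1k+1}=m^{1\dots k\,k+1}_{1\dots k\,k+1}/m_k=m_{k+1}/m_k$, hence $g^{(2k+1)}_{k+1k+1}=m_k/m_{k+1}$.

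However, your treatment of the below-pivot entries contains a genuine error, and it sits precisely at the spot you flag as delicate. You claim that after the normalization $\A^{(2k+1)}=\G_{2k+1}\A^{(2k)}$ one has $a^{(2k+1)}_{ik+1}=m^{1\dots ki}_{1\dots k\,k+1}/m_{k+1}$ for $k+1<i\leq n$. This is false: $\G_{2k+1}$ is diagonal with its only non-unit entry in position $(k+1,k+1)$, so it rescales row $k+1$ only, and every row $i\neq k+1$ of $\A^{(2k+1)}$ coincides with the corresponding row of $\A^{(2k)}$. Hence $a^{(2k+1)}_{ik+1}=a^{(2k)}_{ik+1}=m^{1\dots ki}_{1\dots k\,k+1}/m_{k}$ for all $i>k+1$, and \eqref{defgeven} immediately yields $g^{(2k+2)}_{ik+1}=-m^{1\dots ki}_{1\dots k\,k+1}/m_{k}$, with denominator $m_k$ exactly as the theorem states; similarly for $1\leq i\leq k$ one gets $g^{(2k+2)}_{ik+1}=-a^{(2k)}_{ik+1}=(-1)^{k+i+1}m^{1\dots k}_{1\dots i-1i+1\dots k+1}/m_k$, the sign being just $-1$ times the sign already present in \eqref{congthucquynapcuaA}, with no separate Laplace expansion needed once Theorem \ref{cmcttruyhoi} is granted. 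The ``tension'' between $/m_k$ and $/m_{k+1}$ that you propose to resolve by bookkeeping does not exist; worse, if one follows your claimed equality, the induction closes on entries of $\G_{2k+2}$ carrying denominator $m_{k+1}$, contradicting the statement being proved. The fix is one sentence (row scaling does not touch rows $i\neq k+1$), but as written your argument fails at that step rather than merely being imprecise.
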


\begin{theorem}[Explicit expressions for Gauss-Jordan elimination]
	\label{cmcttruyhoi} Let 
	$\A= [a_{ij}]_{n\times n}\in \M_{n}(\mathbb{R})$ be   diagonally eliminable.  Let $k<n$. Then 
	\begin{equation*}
		\A^{(2k)}=%
		\begin{bmatrix}
			1 & \cdots  & 0 & a_{1k+1}^{(2k)} & \cdots  & a_{1n}^{(2k)} \\ 
			\vdots  & \ddots  & \vdots  & \vdots  & \ddots  & \vdots  \\ 
			0 & \cdots  & 1 & a_{kk+1}^{(2k)} & \cdots  & a_{kn}^{(2k)} \\ 
			0 & \cdots  & 0 & a_{k+1k+1}^{(2k)} & \cdots  & a_{k+1n}^{(2k)} \\ 
			\vdots  & \ddots  & \vdots  & \vdots  & \ddots  & \vdots  \\ 
			0 & \cdots  & 0 & a_{nk+1}^{(2k)} & \cdots  & a_{nn}^{(2k)}%
		\end{bmatrix},%
	\end{equation*}%
	where 
	\begin{equation}
		a_{ij}^{(2k)}=%
		\begin{cases}
			(-1)^{k+i} \dfrac{m^{1\dots k}_{1\dots i-1i+1\dots kj}}{m_{k}}%
			\medskip  & \mbox{ if }1\leq i\leq k,k+1\leq j\leq n \\ 
			
			\qquad \qquad \dfrac{m^{1\dots ki}_{1\dots kj}}{m_{k}} & \mbox{ if }k+1\leq i\leq n, k+1\leq j\leq n%
		\end{cases}.%
		\label{congthucquynapcuaA}
	\end{equation}%
	In particular $ \A^{(2n)}=I_n $.
\end{theorem}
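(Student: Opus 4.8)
The plan is to argue by induction on $k$, reading each transition $\A^{(2k)}\to\A^{(2k+2)}$ off the recursive Definition \ref{defgau} and feeding in the pivot value from Theorem \ref{Gmat}. The base case $k=0$ is immediate: $\A^{(0)}=\A$, and since $m_0=1$ and the $1\times1$ minor $m^{i}_{j}$ is $a_{ij}$, the lower branch of \eqref{congthucquynapcuaA} returns $a^{(0)}_{ij}=a_{ij}$ while the upper branch is vacuous. For the inductive step I would first apply $\G_{2k+1}$, which by Theorem \ref{Gmat} multiplies row $k+1$ by $m_k/m_{k+1}$ and fixes every other row; the induction hypothesis then gives $a^{(2k+1)}_{k+1,j}=\frac{m_k}{m_{k+1}}\cdot\frac{m^{1\dots(k+1)}_{1\dots k\,j}}{m_k}=\frac{m^{1\dots(k+1)}_{1\dots k\,j}}{m_{k+1}}$. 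Applying $\G_{2k+2}$ next subtracts $a^{(2k+1)}_{i,k+1}$ times row $k+1$ from each row $i\neq k+1$. Because row $k+1$ already carries zeros in columns $1,\dots,k$, neither operation disturbs the previously reduced columns, so columns $1,\dots,k+1$ of $\A^{(2k+2)}$ are standard basis vectors, which is the block shape displayed in the statement. It remains to identify, for $k+2\le j\le n$, the entries produced by the recursion $a^{(2k+2)}_{ij}=a^{(2k)}_{ij}-a^{(2k)}_{i,k+1}\,a^{(2k+1)}_{k+1,j}$.

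Substituting the induction hypothesis and clearing the factor $m_k m_{k+1}$, each entry collapses to an identity between minors on $\{1,\dots,k\}$ and on $\{1,\dots,k+1\}$. For the lower block $k+2\le i\le n$ the target is
\[
m^{1\dots k\,i}_{1\dots k\,j}\,m_{k+1}-m^{1\dots k\,i}_{1\dots(k+1)}\,m^{1\dots(k+1)}_{1\dots k\,j}=m_k\,m^{1\dots(k+1)\,i}_{1\dots(k+1)\,j},
\]
which is precisely Sylvester's identity (the Desnanot--Jacobi relation) with pivot block the principal $k\times k$ minor and bordering rows $k+1,i$ and columns $k+1,j$; it delivers the lower branch of \eqref{congthucquynapcuaA} with $k$ replaced by $k+1$. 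The pivot row $i=k+1$ needs nothing further: the value computed above is $\frac{m^{1\dots(k+1)}_{1\dots k\,j}}{m_{k+1}}$, and the sign $(-1)^{(k+1)+(k+1)}=1$ in the upper branch makes this agree.

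The case I expect to be the genuine obstacle is the upper block $1\le i\le k$, where the asserted sign is $(-1)^{(k+1)+i}$. Here the analogue of the Sylvester relation is no longer the symmetric two-border identity, because the row index $i$ sits inside the pivot block; moreover the sign has to be produced, not merely copied. The transparent way to handle both issues at once is to describe the upper-right entries structurally rather than to chase the recursion: once the first $k$ columns are reduced, the cumulative row operations add only multiples of the pivot rows $1,\dots,k$ to the others, so the top-left transform is exactly $\big(\A^{1\dots k}_{1\dots k}\big)^{-1}$ and the top-right block of $\A^{(2k)}$ equals $\big(\A^{1\dots k}_{1\dots k}\big)^{-1}$ times the corresponding columns of $\A$. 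Thus $a^{(2k)}_{ij}=\sum_{\ell=1}^{k}\big(\A^{1\dots k}_{1\dots k}\big)^{-1}_{i\ell}\,a_{\ell j}$, and expanding the inverse by the adjugate rewrites this as $1/m_k$ times the determinant of the principal $k\times k$ submatrix with its $i$-th column replaced by column $j$. Reordering that inserted column into increasing index position costs $k-i$ transpositions, which is exactly what manufactures the factor $(-1)^{k-i}=(-1)^{k+i}$ together with the sorted minor $m^{1\dots k}_{1\dots i-1\,i+1\dots k\,j}$, i.e. the upper branch of \eqref{congthucquynapcuaA}; repeating the computation with the $(k+1)\times(k+1)$ principal block closes the induction for this block as well. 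Finally, taking $k=n$ leaves no column index $j$ in the admissible range, so only the identity part survives and $\A^{(2n)}=I_n$. All denominators $m_1,\dots,m_n$ are nonzero because $\A$ is diagonally eliminable (Proposition \ref{diagonally}), which legitimates every division above.
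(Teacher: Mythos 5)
Your proposal is correct, but there is nothing in the paper to compare it against line by line: the paper never proves Theorem \ref{cmcttruyhoi} itself. Subsection \ref{exex} defers the proofs of both Theorem \ref{Gmat} and Theorem \ref{cmcttruyhoi} to the references \cite{Li} and \cite{NJI1}. So what you have written is a self-contained in-text replacement for an external citation. Checking it on its own merits: the base case, the observation that $\G_{2k+1}$ and $\G_{2k+2}$ do not disturb the already-reduced columns (because the pivot row has zeros there), the computation $a^{(2k+1)}_{k+1,j}=m^{1\dots k\,k+1}_{1\dots k\,j}/m_{k+1}$, the closure of the recursion $a^{(2k+2)}_{ij}=a^{(2k)}_{ij}-a^{(2k)}_{i,k+1}a^{(2k+1)}_{k+1,j}$ on the lower-right block via Sylvester's (Desnanot--Jacobi) identity, the pivot row $i=k+1$ with sign $(-1)^{(k+1)+(k+1)}=1$, the structural treatment of the upper-right block via the inverse of the principal $k\times k$ block together with the adjugate and the $(-1)^{k-i}=(-1)^{k+i}$ column-reordering sign, and the degenerate case $k=n$ giving $\A^{(2n)}=I_n$ are all sound. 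The gain over the paper's citation is precisely that only classical determinant identities are used, so the result becomes verifiable inside the paper.

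Two places should be tightened. First, invoking Theorem \ref{Gmat} for the pivot value $m_k/m_{k+1}$ is superfluous and potentially circular, since \ref{Gmat} is essentially the case $i=j=k+1$ of the formula you are proving (via $g^{(2k+1)}_{k+1,k+1}=1/a^{(2k)}_{k+1,k+1}$ from Definition \ref{defgau}); inside your induction the value $a^{(2k)}_{k+1,k+1}=m_{k+1}/m_k$ comes directly from the induction hypothesis (lower branch), and diagonal eliminability makes it nonzero, so the procedure is well defined at every step -- that is all you need, with no appeal to \ref{Gmat} or to Proposition \ref{diagonally}. Second, the claim that ``the top-left transform is exactly $\bigl(\A^{1\dots k}_{1\dots k}\bigr)^{-1}$'' needs one explicit line of justification: every $\G_q$ with $q\leq 2k$ has unit diagonal entries in positions $k+1,\dots,n$ and off-diagonal entries confined to columns $1,\dots,k$, hence has the block form $\begin{pmatrix} \ast & 0\\ \ast & I \end{pmatrix}$, and this form is closed under multiplication; therefore the cumulative transform $E$ with $\A^{(2k)}=E\A$ satisfies $E_{12}=0$, so $E_{11}\A^{1\dots k}_{1\dots k}=I_k$ forces $E_{11}=\bigl(\A^{1\dots k}_{1\dots k}\bigr)^{-1}$, and the top-right block of $\A^{(2k)}$ is $E_{11}$ applied to columns $k+1,\dots,n$ of the first $k$ rows of $\A$. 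With that line inserted, your cofactor expansion finishes the upper branch exactly as stated.
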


If $ \A $ is diagonally eliminable, the inverses of the matrices of the Gauss-Jordan procedure $ \G  $ are well-defined, as follows. For odd indices we have $\G^{-1}_{2k+1}=\left[\left(g^{-1}_{ij}\right)^{(2k+1)}\right]$, with 	\begin{equation*}
	\left(	g^{-1}_{ij}\right)^{(2k+1)}=%
	\begin{cases}
		\quad 1 & \mbox{ if }i=j\not=k+1 \\ 
		\quad 0 & \mbox{ if }i\not=j \\ 
		\dfrac{m_{k+1}}{m_{k}} & \mbox{ if }i=j=k+1%
	\end{cases},%
\end{equation*}
and for even indices
$\begin{array}{rr}
	\mathcal{G}^{-1}_{2k+2}=%
	\begin{bmatrix}
		\left(g^{-1}_{ij}\right)^{(2k+2)}%
	\end{bmatrix}%
	_{n\times n} 
\end{array}%
$, with
\begin{equation*}
	\left(g^{-1}_{ij}\right)^{(2k+2)}=%
	\begin{cases}\qquad \qquad \qquad 0 & \mbox{ if }j\not \in \{i,k+1\} \\ 
		\qquad \qquad \qquad 1 & \mbox{ if }j=i \\ 
		(-1)^{k+i} \dfrac{m^{1\dots k}_{1\dots i-1i+1\dots k+1}}{m_{k}}%
		\medskip  & \mbox{ if }1\leq i\leq k,k+1\leq j\leq n \\ 
		\qquad \qquad \dfrac{m^{1\dots ki}_{1\dots kj}}{m_{k}} & \mbox{ if }k+1<i\leq n, k+1\leq j\leq n%
	\end{cases}.%
\end{equation*}

The sequence  $\A^{(2n)}, \G^{-1}_1\left(\A^{(2n)}\right),\dots, \mathcal{G}^{-1}_{1}\left(\cdots\left(\mathcal{G}^{-1}_{2n}\A^{(2n)}\right)\right) =\A$ is called the \emph{inverse Gauss-Jordan procedure}.

\section{Stability and Gauss-Jordan operations}\label{lemmas}
For reduced and properly arranged matrices, when applying the Gauss-Jordan operations to the intermediate matrices, only a moderate growth is possible for the elements and their neutrix parts. If the determinant is not an absorber of the neutrix part of the right-hand member, this neutrix even remains constant. If in addition the flexible system is stable, a stable matrix is transformed into a stable matrix, while the relative uncertainty of the intermediate matrices remains always less than the relative uncertainty of the right-hand members. Together this leads to a proof of Theorem \ref{lmqhnt} on the preservation of stability under the Gauss-Jordan operations, with the final matrix being a near-identity matrix. 

The principal tools in establishing the above properties of orders of magnitude and stability are bounds on the order of magnitude of minors. Indeed, because the pivots are quotients of minors, they have direct influence on the order of magnitude of the entries and neutrix parts of the intermediate matrices and the right-hand members.

\begin{remark}
	We recall from the previous section that a reduced matrix $ \A $ has always a reduced representative matrix, and from now on we always suppose that a representative matrix is reduced.
\end{remark}
Proposition \ref{limitofelement} shows that the Gauss-Jordan operations do not lead to an unlimited growth of the elements of the intermediate matrices. 
\begin{proposition}
	\label{limitofelement} Let $\mathcal{A}=[\alpha_{ij}]_{n\times n}\in \M_{n}(\E)$ be a reduced  non-singular matrix, such that it admits a  properly arranged  representative matrix $ P $. Then $a^{(q)}_{ij}$ is limited whenever $1\leq q\leq 2n$ and $%
	1\leq i, j\leq n$.
\end{proposition}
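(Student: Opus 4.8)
The statement claims that every entry $a^{(q)}_{ij}$ of every intermediate matrix is limited, given that $\mathcal{A}$ is reduced, non-singular, and admits a properly arranged representative matrix $P$. The key observation is that by Theorem \ref{cmcttruyhoi} the entries of the even-indexed intermediate matrices $\A^{(2k)}$ are given explicitly as quotients of minors, and by Theorem \ref{Gmat} the entries of the Gauss-Jordan operation matrices are likewise quotients of minors. So the problem reduces to bounding these quotients. The plan is to show that each such quotient is limited by using the properly arranged hypothesis, which is precisely designed to control the numerators relative to the denominators.

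\emph{First steps.} I would treat the even and odd indices separately, since $\A^{(2k)}$ has the clean closed form \eqref{congthucquynapcuaA} while $\A^{(2k+1)}$ differs from $\A^{(2k)}$ only by the scaling of row $k+1$ via the pivot $m_k/m_{k+1}$ from \eqref{Gmatodd}. For the even case, the entries split into two families: for $1\leq i\leq k$ the entry is $\pm m^{1\dots k}_{1\dots i-1\,i+1\dots k\,j}/m_k$, and for $k+1\leq i\leq n$ it is $m^{1\dots k\,i}_{1\dots k\,j}/m_k$. In each case the numerator is a $k\times k$ or $(k+1)\times(k+1)$ minor and the denominator is the principal minor $m_k$. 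I would invoke the properly arranged condition of Definition \ref{matrixofgaussJordanelimination}, namely $|m^{1\dots k\,i}_{1\dots k\,j}|\leq|m_{k+1}|$, to bound the $(k+1)$-order numerators; combined with the nonvanishing of the principal minors (diagonal eliminability, guaranteed by Proposition \ref{diagonally}), this should make the quotient limited. The smaller minors in the first family need a separate but analogous bound, again coming from the reducedness (entries of $P$ bounded by $1$) together with limitedness of all minors from Proposition \ref{cdt}.

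\emph{Passing from representatives to external entries.} Since the Gauss-Jordan operations are carried out on the representative matrix $P$, the quantities $a^{(q)}_{ij}$ are the \emph{real} representative entries, computed from the real minors of $P$. Because $P$ is reduced and limited, Proposition \ref{cdt} gives a single limited bound $L$ on all its minors simultaneously. The crux is therefore to bound the reciprocal $1/m_k$, i.e. to show that the principal minors of the properly arranged representative $P$ are not infinitesimal, or at least that the relevant numerator-to-$m_k$ ratios stay limited even when $m_k$ is small. Here the properly arranged inequalities $|m^{1\dots k\,i}_{1\dots k\,j}|\leq|m_{k+1}|$ are the essential lever: they ensure each numerator is controlled by a minor of the \emph{next} order, so that the chain of quotients telescopes without blow-up.

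\emph{The main obstacle.} The hard part will be controlling the quotients when the principal minors $m_k$ themselves are infinitesimal, which is allowed (the determinant $m_n=\det P$ may be infinitesimal, as in the second example). In that regime one cannot simply say ``numerator limited, denominator appreciable, hence quotient limited.'' Instead I expect to argue that the properly arranged bound forces the numerator to be of order at most that of $m_{k+1}$, and then to relate the orders of magnitude of successive principal minors—exploiting precisely the relationship between determinants and their principal minors alluded to in Subsection \ref{excalculus} and made precise in Proposition \ref{lei2}. Making this telescoping rigorous, so that the product of the pivot factors $m_k/m_{k+1}$ across the odd steps does not accumulate into something unlimited, is where the real work lies; I would likely proceed by external induction on $k$, maintaining as the inductive hypothesis that all entries produced through stage $2k$ are limited, and using the explicit formulas to propagate this to stage $2k+2$.
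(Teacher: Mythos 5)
Your proposal ends by gesturing at the right skeleton (external induction carrying limitedness of all entries of the intermediate matrices), but the body of your argument follows a direct route that has a genuine gap, and the tools you offer to close it do not work. Bounding $a^{(2k)}_{ij}=m^{1\dots k\,i}_{1\dots k\,j}/m_k$ via the properly arranged inequality $\left|m^{1\dots k\,i}_{1\dots k\,j}\right|\leq \left|m_{k+1}\right|$ only yields $\left|a^{(2k)}_{ij}\right|\leq \left|m_{k+1}/m_k\right|$, so you still must show that the pivot $m_{k+1}/m_k$ is limited --- which is itself one of the entries the proposition is trying to bound, and which may be a quotient of two infinitesimal minors. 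You correctly flag this as the main obstacle, but you leave it unresolved (``where the real work lies''), and neither of the tools you name resolves it: Proposition \ref{lei2} compares the minors $\Delta_{i,j}$ of an \emph{external} matrix with $\Delta$ and gives a \emph{lower} bound, not an upper bound on ratios of successive principal minors of the real matrix $P$; and in the paper the statement $\left|m_{k+1}/m_k\right|\in\pounds$ (Theorem \ref{propmm}, Theorem \ref{tinhgioihancuasohang}) is deduced \emph{from} Proposition \ref{limitofelement}, so taking it as an ingredient would be circular.

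The paper's proof dissolves the obstacle instead of confronting it. It runs external induction on the intermediate matrices themselves and applies the properly arranged condition only at the odd step, to the row that has just been divided by the pivot: there $a^{(2k+1)}_{k+1\,j}=m^{1\dots k\,k+1}_{1\dots k\,j}/m_{k+1}$, where numerator and denominator are both minors of order $k+1$, so the properly arranged condition gives directly $\left|a^{(2k+1)}_{k+1\,j}\right|\leq 1$; the other rows are untouched, hence limited by the induction hypothesis. The even step is then pure arithmetic of limited numbers, $a^{(2k+2)}_{ij}=a^{(2k+1)}_{ij}-a^{(2k+1)}_{i\,k+1}a^{(2k+1)}_{k+1\,j}$, so no quotient with denominator $m_k$ is ever estimated, no control of infinitesimal principal minors is needed, and the limitedness of the next pivot $m_{k+2}/m_{k+1}=a^{(2k+2)}_{k+2\,k+2}$ comes out of the induction for free. (Quantitatively this is the classical growth-factor argument: the normalized pivot row is bounded by $1$, so the maximal entry at most doubles per elimination step and stays below $2^n$, which is limited because $n$ is standard.) If you rewrite your induction so that the properly arranged condition is invoked exactly in this way --- matching the numerator to $m_{k+1}$ rather than to $m_k$ --- your proof closes; as it stands, that key step is missing.
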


\begin{proof}
	We apply external induction. Because  $P$ is reduced, it holds that $|a_{ij}|\leq 1$ for  $1\leq i,j\leq n$ and, since $a_{ij}^{(1)}=a_{ij}$, the same is true for $\left|a_{ij}^{(1)}\right|$. It follows that $\left|a^{(2)}_{1j}%
	\right|= \left|a_{1j}\right|\leq 1$ for $1\leq j\leq n$ and $\left|a^{(2)}_{ij}\right|=%
	\left|a_{ij}-a_{i1} a_{1j}\right|\leq
	\left|a_{ij}\right|+\left|a_{i1}\right|\left|a_{1j}\right|\leq 2$ for
	$2\leq i\leq n, 1\leq j\leq n.$  Hence $%
	a^{(2)}_{ij}$ is limited for $1\leq i, j\leq n$. 
	As for the induction step, let $ k\leq n-1 $ and suppose that $	a^{(q)}_{ij}$ is limited for $q\leq 2k $ and $1\leq i, j\leq n$.
	Because the $j^{th}$ column of $P^{(2k+1)}$ is a unit vector for $1\leq
	j\leq k$, the entries of these columns are limited. For $1\leq i\leq n, k+1\leq j\leq n$ one has 
	\begin{equation*}
		a^{(2k+1)}_{ij}=%
		\begin{cases}
			a^{(2k)}_{ij} & \text{ if } i\not =k+1 \\ 
			\dfrac{m^{1...ki}_{1...kj}}{m_{k+1}} & \text { if } i=k+1%
		\end{cases}.%
	\end{equation*}
	So $a^{(2k+1)}_{ij}=a^{(2k)}_{ij}$ is limited for $i\not =k+1$ and $ k+1\leq j\leq n$ by the induction hypothesis, and because $ P$ is properly arranged, also for $i=k+1$ and $k+1\leq j\leq n$, since 
	$\left|a^{(2k+1)}_{k+1j} \right|\leq\left|\dfrac{m^{1...ki}_{1...kj}}{m_{k+1}} \right| \leq 1$. Combining, we see that $a^{(2k+1)}_{ij}$ is limited for $%
	1\leq i, j\leq n$.
	
	As for $ P^{(2k+2)} $, in addition to the first $ k $ columns, also the   $(k+1)^{th}$ column is a unit vector, i.e. has limited components. Because the elements of $ P^{(2k+1)} $ are limited we derive that  $a^{(2k+2)}_{k+1j}=a^{(2k+1)}_{k+1j}$
	is limited for $k+2\leq j\leq n$, and   $a^{(2k+2)}_{ij}=a^{(2k+1)}_{ij}-a^{(2k+1)}_{ik+1}%
	a^{(2k+1)}_{k+1j}$ is limited for $ 1\leq i\leq n,i\not =k+1 $ and $k+1\leq j\leq n$.  Hence $a^{(2k+2)}_{ij}$ is limited for $%
	1\leq i, j\leq n$.
\end{proof}

The quotients of the principal minors and \emph{a fortiori} the pivots have definite lower bounds and upper bounds in terms of neutrices. This is shown in Theorem \ref{propmm}. This theorem includes bounds for the pivots of the inverse procedure, which we will need to verify that the Gauss-Jordan solution is a solution of the original system. To prove the theorem we present first some notation and an auxiliary result, saying that the determinants of the intermediate matrices are at least of the same order of magnitude as the determinant of the original matrix.

\begin{notation}
	Let $\mathcal{A}=[\alpha_{ij}]_{n\times n}\in \M_{n}(\E)$ be a reduced non-singular  matrix, such that it admits a properly arranged representative matrix $ P=[a_{ij}]_{n\times n} $. For $ 1\leq q\leq 2n $ we write $ d=\det(P) $, $  d^{(q)}=\det(P^{(q)})$ and $\Delta^{(q)}= \det\A^{(q)}=d^{(q)}+D^{(q)}$. 
\end{notation}
\begin{lemma}\label{lower bound of d2k} 
	Let $\mathcal{A}=[\alpha_{ij}]_{n\times n}\in \M_{n}(\E)$ be a  reduced non-singular matrix, such that it admits a properly arranged representative matrix $ P $. Let $1\leq q\leq 2n$  and $ k $ be such that $q=2k-1$ or $q=2k$. Then $\left|d^{(q)}\right|= \left|\dfrac{d}{m_{k}}\right|>\oslash \Delta $.
\end{lemma}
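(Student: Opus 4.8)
The plan is to split the statement into two independent claims: an exact real-number identity, $d^{(q)}=d/m_k$, and an order-of-magnitude comparison, $\left|d/m_k\right|>\oslash\Delta$. The first is a classical determinant computation and the second lives entirely in the arithmetic of external numbers. I would emphasize at the outset that $d$, $d^{(q)}$ and $m_k$ are all determinants (resp. minors) of the \emph{real} representative matrix, so for the identity the ordinary multiplicative law of the determinant applies with no external-number caveats, and the nonstandard content is confined to the second claim.

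For the identity I would track the determinant through the elementary factors $\G^{P}_1,\dots,\G^{P}_q$, using $P^{(q)}=\G^{P}_q\cdots\G^{P}_1 P$ together with multiplicativity of $\det$ on real matrices. By the explicit expressions of Theorem \ref{Gmat}, the odd matrix $\G_{2j+1}$ is diagonal, equal to the identity except for the entry $m_j/m_{j+1}$ in position $(j+1,j+1)$, so $\det\G_{2j+1}=m_j/m_{j+1}$; the even matrix $\G_{2j+2}$ differs from the identity only in off-diagonal entries of column $j+1$, hence $\det\G_{2j+2}=1$. Whether $q=2k-1$ or $q=2k$, exactly the odd factors with indices $j=0,\dots,k-1$ have been applied, so the relevant product telescopes, $\prod_{j=0}^{k-1}\frac{m_j}{m_{j+1}}=\frac{m_0}{m_k}=\frac{1}{m_k}$, and multiplying by $d=\det P$ gives $d^{(q)}=d/m_k$ in both cases; hence $|d^{(q)}|=|d/m_k|$. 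Here $m_k\neq 0$ because $P$ is non-singular and properly arranged, thus diagonally eliminable by Proposition \ref{diagonally}.

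For the comparison I would first simplify the target neutrix. Since $\A$ is non-singular, $\Delta=d+D$ is zeroless, so $d\neq 0$ and, by Proposition \ref{tcopt}.\ref{tcoptiv}, $\oslash\Delta=\oslash d=\oslash|d|$. It therefore suffices to show that the positive real number $|d|/|m_k|$ exceeds the neutrix $\oslash|d|$. A positive real $r$ satisfies $r>N$ for a neutrix $N$ exactly when $r\notin N$, in which case it automatically dominates every element of the convex symmetric set $N$; dividing through by $|d|$ thus reduces the claim to $1/|m_k|\notin\oslash$, that is, to $|m_k|$ being non-zero and limited. Non-vanishing is again Proposition \ref{diagonally}, and limitedness follows because $P$ is reduced: its entries are bounded in absolute value by (about) $1$, so the $k\times k$ minor satisfies $|m_k|\le k!\le n!$, a standard and hence limited bound (one may also invoke Proposition \ref{cdt}).

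The determinant bookkeeping and the telescoping are routine; the step I would set out most explicitly, and the only genuinely delicate one, is the external-number part, where one must correctly read what ``$r>N$'' means for a real number $r$ and a neutrix $N$ and reduce it, via the rewriting $\oslash\Delta=\oslash d$, to the mere limitedness of $m_k$. This is where the nonstandard content of the lemma resides, and I would make sure the equivalence ``$r>N \iff r\notin N$'' for positive $r$ is justified from the definition of the order on $\E$.
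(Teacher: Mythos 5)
Your proposal is correct and follows essentially the same path as the paper: the identity $\left|d^{(q)}\right|=\left|d/m_{k}\right|$ is obtained by multiplicativity of the determinant over the elementary factors and the telescoping product of pivots, and the comparison with $\oslash\Delta$ reduces in both arguments to $m_{k}$ being non-zero (Proposition \ref{diagonally}) and limited (Proposition \ref{cdt}) while $\Delta$ is zeroless. The only difference is cosmetic: the paper closes by contradiction, deducing $d\in\oslash\Delta\cap\Delta$ and invoking Proposition \ref{tcopt}.\ref{giao}, whereas you normalize by $|d|$ via Proposition \ref{tcopt}.\ref{tcoptiv} and check directly that $1/|m_{k}|\notin\oslash$; both hinge on exactly the same facts.
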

\begin{proof} Let $1\leq k\leq n$ and $q=2k-1$ or $q=2k$. In both cases	
	\begin{alignat}{2}
		\left|d^{(q)}\right|=&\left|\det(\mathcal{G}_{q}) \det(\mathcal{G}%
		_{q-1})\cdots \det(\mathcal{G}_{1}) d\right| & \label{formula of d2k new} \\
		=&\left|\dfrac{m_{k-1}}{m_{k}}\dfrac{m_{k-2}}{m_{k-1}}\cdots \dfrac{%
			m_{1}}{m_{2}}\dfrac{1}{m_{1}}d\right|  = \left|\dfrac{d}{m_{k}}\right|. &\notag  
	\end{alignat}
	Suppose $|d^{(q)}|\subseteq \oslash \Delta.$ Then $d\in m_{k}\oslash \Delta$ by \eqref{formula of d2k new}.
	By Proposition \ref{cdt} it holds that $d\in \oslash \Delta$. Hence $d\in \oslash \Delta \cap \Delta$. Because $\Delta$ is zeroless, this contradicts  Proposition \ref{tcopt}.\ref{giao}. Hence $\left|d^{(q)}\right|>\oslash \Delta$.		
\end{proof}

\begin{theorem}\label{propmm}
	Let $\mathcal{A}=[\alpha_{ij}]_{n\times n}\in \M_{n}(\E)$ be a reduced, non-singular matrix, which is properly arranged with respect to a   matrix of  representatives $P$.    Then for $ 1\leq k< n $
	\begin{equation}\label{mk+1mk}
		\oslash\Delta <%
		\begin{vmatrix}
			\dfrac{m_{k+1}}{m_{k}}%
		\end{vmatrix}
		\in \pounds
	\end{equation}
	and 
	\begin{equation}\label{mkmk+1}
		\oslash <%
		\begin{vmatrix}
			\dfrac{m_{k}}{m_{k+1}}%
		\end{vmatrix}
		\in \frac{\pounds}{\Delta}.
	\end{equation}	
\end{theorem}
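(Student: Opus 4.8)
The plan is to obtain all four estimates from a single ingredient, Lemma \ref{lower bound of d2k}, read off at the two consecutive levels $k$ and $k+1$ (both legitimate, since $k<n$ forces $k+1\le n$), combined with the remark that the ratio $m_{k+1}/m_k$ is nothing but the central pivot of the $2k$-th intermediate matrix. I work throughout with the \emph{real} principal minors $m_k=\det\big(P^{1\cdots k}_{1\cdots k}\big)$ of the representative matrix $P$, which is the only natural reading here since $m_k$ need not be zeroless as an external minor; these reals are nonzero because $P$, being a properly arranged representative of the non-singular matrix $\A$, is diagonally eliminable by Proposition \ref{diagonally}. I set $u=\left|\frac{d}{m_k}\right|$ and $v=\left|\frac{d}{m_{k+1}}\right|$, so that $\left|\frac{m_{k+1}}{m_k}\right|=u/v$ and $\left|\frac{m_k}{m_{k+1}}\right|=v/u$.

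First I would record the raw facts. By Lemma \ref{lower bound of d2k}, $u=\left|d^{(2k)}\right|>\oslash\Delta$ and $v=\left|d^{(2k+1)}\right|>\oslash\Delta$. Since every $P^{(q)}$ has limited entries (Proposition \ref{limitofelement}) and $n$ is standard, each determinant $d^{(q)}$ is limited, so $u,v\in\pounds$. Taking $i=j=k+1$ in the second branch of \eqref{congthucquynapcuaA} identifies $a^{(2k)}_{k+1\,k+1}=m_{k+1}/m_k$; hence $\left|\frac{m_{k+1}}{m_k}\right|\in\pounds$ by Proposition \ref{limitofelement}, which is the upper bound in \eqref{mk+1mk}. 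As $m_{k+1}/m_k$ is a nonzero limited real, its reciprocal is bounded below by a positive appreciable number, giving at once $\oslash<\left|\frac{m_k}{m_{k+1}}\right|$, the lower bound in \eqref{mkmk+1}.

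It then remains to prove the two estimates in which $\Delta$ appears. For the lower bound in \eqref{mk+1mk} I would use $u>\oslash\Delta$ together with $v\le L$ for some appreciable $L>0$: then $u/v\ge u/L$, and since $u\notin\oslash\Delta$ and $L$ is appreciable one has $u/L\notin\oslash\Delta$ (otherwise $u\in L\,\oslash\Delta=\oslash\Delta$), so $u/v\ge u/L>\oslash\Delta$. For the upper bound in \eqref{mkmk+1} I would first rewrite $\oslash\Delta=\oslash\,d$ using that $\Delta$ is zeroless and Proposition \ref{tcopt}.\ref{tcoptiv}; the relation $u>\oslash\,d$ then says precisely that $u/|d|$ is not infinitesimal, whence $|d|/u$ is limited. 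Consequently $(v/u)\,|d|=v\cdot(|d|/u)$ is a product of two limited numbers, hence limited, so $v/u\in\pounds/|d|=\pounds/\Delta$, again by Proposition \ref{tcopt}.\ref{tcoptiv}.

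The calculations are routine once the two identifications above are in place; the only delicate point is the passage from the \emph{membership} statement $u>\oslash\Delta$ to quantitative conclusions about quotients. Concretely, the crux is to convert ``$u$ lies beyond the neutrix $\oslash\Delta$'' into ``$u/|d|$ is appreciable or unlimited, so $|d|/u$ is limited'', which relies on the absorption rule $\oslash\Delta=\oslash\,d$ (Proposition \ref{tcopt}.\ref{tcoptiv}) and on the disjointness $\alpha\cap\oslash\alpha=\emptyset$ of Proposition \ref{tcopt}.\ref{giao} that already underlies Lemma \ref{lower bound of d2k}. Once these conversions are carried out, the four inequalities of \eqref{mk+1mk} and \eqref{mkmk+1} drop out, and no estimate beyond Lemma \ref{lower bound of d2k} is needed.
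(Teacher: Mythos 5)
Your proof is correct, but it reaches the crucial estimate $\oslash\Delta < \left|m_{k+1}/m_k\right|$ by a genuinely different route than the paper. The paper argues by contradiction: if the pivot $a^{(2k)}_{k+1\,k+1}=m_{k+1}/m_k$ lay in $\oslash\Delta$, the properly-arranged inequality $\left|m^{1\dots ki}_{1\dots kj}\right|\le\left|m_{k+1}\right|$ would force every entry $a^{(2k)}_{ij}$ with $k+1\le i,j\le n$ into $\oslash\Delta$, and then the permutation expansion of $d^{(2k)}$ (the determinant of the trailing block) would give $d^{(2k)}\in(\oslash\Delta)^{n-k}\subseteq\oslash\Delta$, contradicting Lemma \ref{lower bound of d2k}. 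You instead apply Lemma \ref{lower bound of d2k} at the two consecutive levels $k$ and $k+1$, writing the pivot as the quotient $u/v$ of $\left|d^{(2k)}\right|$ and $\left|d^{(2k+1)}\right|$, taking the lower bound $u>\oslash\Delta$ from the lemma and the upper bound $v\in\pounds$ from Proposition \ref{limitofelement}. This avoids both the contradiction and the expansion over $S_{n-k}$, and makes the whole theorem flow from a single lemma plus limitedness of the intermediate matrices; note, though, that properly-arrangedness still does the work in your argument, only now hidden inside Proposition \ref{limitofelement} (limitedness of $d^{(2k+1)}$, i.e.\ of $\left|d/m_{k+1}\right|$, fails without it) rather than invoked directly through the minor inequality. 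The remaining pieces --- identifying $m_{k+1}/m_k$ with the pivot via \eqref{congthucquynapcuaA} for the upper bound in \eqref{mk+1mk}, and passing to reciprocals for \eqref{mkmk+1}, with the conversion $\oslash\Delta=\oslash\,d$ and $\pounds/|d|=\pounds/\Delta$ via Proposition \ref{tcopt}.\ref{tcoptiv} --- coincide with the paper's, and your explicit handling of the reciprocal step merely spells out what the paper compresses into ``taking multiplicative inverses''.
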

\begin{proof}
	For $1\leq k\leq n-1$ we have 
	\begin{alignat*}{2}
		\mathcal{A}^{(2k)}= & 
		\begin{bmatrix}
			1+A_{11} & A^{(2k)}_{12} & \cdots & A^{(2k)}_{1k} & 
			\alpha^{(2k)}_{1(k+1)} & \cdots & \alpha^{(2k)}_{1n} \\ 
			A^{(2k)}_{21} & 1+A^{(2k)}_{22} & \cdots & A^{(2k)}_{2k} & 
			\alpha^{(2k)}_{2(k+1)} & \cdots & \alpha^{(2k)}_{2n} \\ 
			\vdots & \vdots & \ddots & \vdots & \vdots & \ddots & \vdots \\ 
			A^{(2k)}_{k1} & A^{(2k)}_{k2} & \cdots & 1+A^{(2k)}_{kk} & 
			\alpha^{(2k)}_{k(k+1)} & \cdots & \alpha^{(2k)}_{kn} \\ 
			A^{(2k)}_{(k+1)1} & A^{(2k)}_{(k+1)2} & \cdots & A^{(2k)}_{(k+1)k} & 
			\alpha^{(2k)}_{(k+1)(k+1)} & \cdots & \alpha^{(2k)}_{(k+1)n} \\ 
			\vdots & \vdots & \ddots & \vdots & \vdots & \ddots & \vdots \\ 
			A^{(2k)}_{n1} & A^{(2k)}_{n2} & \cdots & A^{(2k)}_{nk} & 
			\alpha^{(2k)}_{n(k+1)} & \cdots & \alpha^{(2k)}_{nn}%
		\end{bmatrix}.
	\end{alignat*}
	Suppose on the contrary that $\dfrac{m_{k+1}}{m_{k}}=a^{(2k)}_{k+1k+1}=\dfrac{m_{k+1}}{m_{k}}%
	\in \oslash \Delta$. From $\left|a^{(2k)}_{ij}\right|=\left|\dfrac{%
		m^{1...ki}_{1...kj}}{m_{k}}\right|\leq \left|\dfrac{m_{k+1}}{m_{k}}%
	\right|=\left|a^{(2k)}_{k+1k+1}\right|$ for $k+1\leq i, j\leq n$
	one derives that $a^{(2k)}_{ij}\in \oslash \Delta$ for  $k+1\leq i,
	j\leq n$. Let $S_{n-k}$ be the set of all permutations of $\{k+1,\dots, n\}$. Because $ \Delta $ is limited,
	\begin{equation*}  \label{danh gia d2k}
		d^{(2k)}=\displaystyle \sum_{\sigma \in S_{n-k}}\sgn%
		(\sigma)a^{(2k)}_{k+1\sigma(k+1)} \dots a^{(2k)}_{n\sigma(n)}\in
		(\oslash \Delta)^{n-k}\subseteq \oslash \Delta,
	\end{equation*}
	in contradiction to Lemma \ref{lower bound of d2k}. Hence $\left|\dfrac{m_{k+1}}{m_{k}}\right|>\oslash \Delta$.
	
	Also, by Theorem \ref{Gmat} we have $m_{k+1}/m_{k}=a^{(2k+2)}_{k+1k+1}$, and the latter is limited by Proposition \ref{limitofelement}.	Hence formula \eqref{mk+1mk} holds.  Taking multiplicative inverses, we derive \eqref{mkmk+1}.
\end{proof}

Theorem \ref{propmm} gives bounds on the pivots and the entries of the elementary matrices of the Gauss-Jordan procedure, and the inverse procedure.

\begin{theorem}
	\label{tinhgioihancuasohang} Let $\mathcal{A}=[\alpha_{ij}]_{n\times n}\in \M_{n}(\E)$ be  a reduced, non-singular  matrix, which is properly arranged with respect to a  matrix of representatives $P$.
	\begin{enumerate}
		\item\label{cov} 
		Let $ 1\leq k<n $. Then the $ k^{th} $ diagonal element  of $\mathcal{G}^{P}_{2k+1}$ satisfies $ g^{(2k+1)}_{k+1k+1}\in \frac{\pounds}{\Delta} $ and the elements of $ \G^{P}_{2k+2}$ are all limited.
		\item\label{contrav} All elements of the  matrices $ (\G^{-1})^{P}_{q}, 1\leq q\leq 2n$ of the inverse Gauss-Jordan procedure are limited.
	\end{enumerate}
\end{theorem}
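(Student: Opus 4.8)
The plan is to read off the entries of the Gauss-Jordan matrices from their explicit expressions in Theorem \ref{Gmat} and the inverse matrices listed just after Theorem \ref{cmcttruyhoi}, and then bound each entry using the minor estimates of Theorem \ref{propmm} together with the proper-arrangement hypothesis. For part \ref{cov}, the only diagonal entry of $\mathcal{G}^{P}_{2k+1}$ that differs from $1$ or $0$ is $g^{(2k+1)}_{k+1k+1}=m_{k}/m_{k+1}$, so the claim $g^{(2k+1)}_{k+1k+1}\in \pounds/\Delta$ is precisely the upper bound \eqref{mkmk+1} of Theorem \ref{propmm}. For the entries of $\mathcal{G}^{P}_{2k+2}$, I would inspect the four cases in the even-order formula of Theorem \ref{Gmat}: the values $0$ and $1$ are trivially limited, while the two nontrivial cases are the quotients $\pm m^{1\dots k}_{1\dots i-1\,i+1\dots k+1}/m_{k}$ (for $1\le i\le k$) and $-m^{1\dots k i}_{1\dots k j}/m_{k}$ (for $k+1<i\le n$). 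In the second case, proper arrangement gives $\left|m^{1\dots k i}_{1\dots k j}\right|\le |m_{k+1}|$, so the quotient is bounded by $|m_{k+1}/m_{k}|$, which is limited by \eqref{mk+1mk}; the first case is handled identically since its minor is again a $(k+1)\times(k+1)$ minor built from the first $k$ rows and the $(k+1)$-st, bounded in modulus by $|m_{k+1}|$ under proper arrangement. Thus every entry of $\mathcal{G}^{P}_{2k+2}$ lies in $\pounds$.

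For part \ref{contrav}, I would run the same inspection on the inverse matrices. The odd-order inverse $(\mathcal{G}^{-1})^{P}_{2k+1}$ has its only nontrivial diagonal entry equal to $m_{k+1}/m_{k}$, which is limited by \eqref{mk+1mk} directly. The even-order inverse $(\mathcal{G}^{-1})^{P}_{2k+2}$ has exactly the same quotient-of-minors entries as $\mathcal{G}^{P}_{2k+2}$ up to a change of sign, so the identical proper-arrangement bounds apply and all its entries are limited as well.

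The main point to get right is the matching of the minor in each case of the formulas with the inequality guaranteed by proper arrangement. Definition \ref{matrixofgaussJordanelimination} bounds $\left|m^{1\dots k\, i}_{1\dots k\, j}\right|$ by $|m_{k+1}|$ for $k+1\le i,j\le n$, so I must check that every minor appearing in the numerators is of this admissible shape (first $k$ columns fixed, one extra row and column chosen among indices exceeding $k$), possibly after reordering rows, which only changes the sign. I expect this bookkeeping, rather than any analytic difficulty, to be the real obstacle: once each numerator is certified to be dominated by $|m_{k+1}|$, the conclusion is immediate from dividing by $|m_{k}|$ and invoking \eqref{mk+1mk}. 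I would therefore organize the proof as a short case analysis keyed to the cases of Theorem \ref{Gmat} and the inverse formulas, citing \eqref{mk+1mk}, \eqref{mkmk+1} and Definition \ref{matrixofgaussJordanelimination} at the appropriate places, and noting that limitedness of a finite (standard) collection of entries is preserved since $\pounds$ is closed under the relevant operations.
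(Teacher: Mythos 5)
Most of your proposal is sound, and in three of the four cases it coincides with the paper's own argument: the identification $g^{(2k+1)}_{k+1\,k+1}=m_{k}/m_{k+1}$ together with \eqref{mkmk+1} settles the odd-order claim; for the entries of $\mathcal{G}^{P}_{2k+2}$ with $k+1<i\le n$ the numerator $m^{1\dots k\,i}_{1\dots k\,k+1}$ is indeed of the shape covered by Definition \ref{matrixofgaussJordanelimination}, so the bound by $\left|m_{k+1}/m_{k}\right|\in\pounds$ from \eqref{mk+1mk} is correct; and reducing Part \ref{contrav} to Part \ref{cov} via $\left|\left(g^{-1}\right)^{(2k+2)}_{i\,k+1}\right|=\left|g^{(2k+2)}_{i\,k+1}\right|$, with \eqref{mk+1mk} handling the odd-order inverses, is exactly what the paper does.

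The gap is in your ``first case'', the entries $g^{(2k+2)}_{i\,k+1}$ with $1\le i\le k$. The numerator there is $m^{1\dots k}_{1\dots i-1\,i+1\dots k+1}$, which is a $k\times k$ minor (rows $1,\dots,k$; columns $\{1,\dots,k+1\}\setminus\{i\}$), not a $(k+1)\times(k+1)$ one, and --- except in the single case $i=k$ --- it is not of the admissible shape of Definition \ref{matrixofgaussJordanelimination}: its column set omits an interior column $i<k$ while including column $k+1$, and no reordering of rows can repair this, since the defect lies in which columns are selected, not in their order. Proper arrangement therefore gives no bound at all on this minor, and your claimed inequality $\left|m^{1\dots k}_{1\dots i-1\,i+1\dots k+1}\right|\le\left|m_{k+1}\right|$ is unjustified; this is precisely the bookkeeping obstacle you anticipated, and it does not go through. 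The paper circumvents it entirely: by \eqref{defgeven} one has $g^{(2k+2)}_{i\,k+1}=-a^{(2k+1)}_{i\,k+1}$, an entry of an intermediate matrix, and Proposition \ref{limitofelement} proves all such entries limited by external induction along the Gauss-Jordan steps. In that induction, proper arrangement is used only where it genuinely applies, namely to bound the pivot-row entries $\left|a^{(2k+1)}_{k+1\,j}\right|$ by $1$; the remaining rows are then controlled recursively through $a^{(2k+2)}_{ij}=a^{(2k+1)}_{ij}-a^{(2k+1)}_{i\,k+1}a^{(2k+1)}_{k+1\,j}$, which keeps everything limited over the standard number of steps. Your proof becomes correct if, for $1\le i\le k$, you replace the appeal to proper arrangement by this identification together with a citation of Proposition \ref{limitofelement}.
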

\begin{proof}
	
	\begin{enumerate}		
		\item The property is a direct consequence of Theorem \ref{propmm} and Proposition \ref{limitofelement}.		
		\item For the intermediate matrices of odd index the property follows from \eqref{mk+1mk}, and for the intermediate matrices of even index $ q=2k, k<n $ the property follows from the fact that $\left| \left( g^{-1}\right)^{(2k+2)}_{ik+1}\right| =\left| g^{(2k+2)}_{ik+1}\right| $ for $1\leq i\leq n, 1\leq k\leq n-1$, and Part \ref{cov}.		
	\end{enumerate}	
\end{proof}

With the help of Theorem \ref{tinhgioihancuasohang} we derive a bound for the possible increase of the neutrix parts of the intermediate matrices of the Gauss-Jordan procedure. If in addition the original matrix is stable, Lemma \ref{lower bound of d2k} permits to prove that they always are infinitesimal, implying that the intermediate matrices remain both non-singular and stable, until obtaining a near-identity matrix at the end.


\begin{proposition}\label{value of Ak}
	Let $\mathcal{A}\in \M_{n}(\E)$ be  a reduced, non-singular matrix, which is properly arranged with respect to a matrix of representatives $P$. Then for all  $ k $ such that $1\leq k\leq n$, 
	\begin{equation*}
		\overline{A^{(2k)}}=\overline{A^{(2k-1)}}\subseteq \dfrac{\oA}{m_k}.
	\end{equation*} 
\end{proposition}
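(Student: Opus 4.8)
The plan is to argue by induction on $k$, treating the passage from $\A^{(2k-2)}$ to $\A^{(2k)}$ as the composition of the odd \emph{normalizing} step $\G_{2k-1}$ (which scales row $k$) and the even \emph{elimination} step $\G_{2k}$ (which clears column $k$), and to exploit that every operation matrix $\G^P_q$ is real. The base case is $\overline{A^{(0)}}=\oA=\oA/m_0$, and the induction hypothesis is $\overline{A^{(2k-2)}}\subseteq\oA/m_{k-1}$. The one computational principle I would record first is the action of a real operation matrix on neutrix parts: since the $(i,j)$-entry of $\G^P_q\A^{(q-1)}$ is the real linear combination $\sum_l g^{(q)}_{il}\alpha^{(q-1)}_{lj}$, Definition \ref{defnam} gives that its neutrix part is exactly $\max_l|g^{(q)}_{il}|\,A^{(q-1)}_{lj}$ (only additions and real scalings occur, so no subdistributivity correction enters).

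\emph{Even step: $\overline{A^{(2k)}}=\overline{A^{(2k-1)}}$.} By Definition \ref{defgau} the matrix $\G_{2k}$ fixes row $k$ and replaces $\alpha^{(2k-1)}_{ij}$ by $\alpha^{(2k-1)}_{ij}-a^{(2k-1)}_{ik}\alpha^{(2k-1)}_{kj}$ for $i\neq k$, so that $A^{(2k)}_{kj}=A^{(2k-1)}_{kj}$ and $A^{(2k)}_{ij}=\max\{A^{(2k-1)}_{ij},\,|a^{(2k-1)}_{ik}|A^{(2k-1)}_{kj}\}$ for $i\neq k$. The inclusion $\overline{A^{(2k)}}\supseteq\overline{A^{(2k-1)}}$ is then immediate, since each entry neutrix is a maximum containing $A^{(2k-1)}_{ij}$ and row $k$ is untouched. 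For the reverse inclusion I use that $a^{(2k-1)}_{ik}$ is limited by Proposition \ref{limitofelement}, so $|a^{(2k-1)}_{ik}|A^{(2k-1)}_{kj}\subseteq\pounds A^{(2k-1)}_{kj}=A^{(2k-1)}_{kj}\subseteq\overline{A^{(2k-1)}}$; hence every $A^{(2k)}_{ij}\subseteq\overline{A^{(2k-1)}}$ and therefore $\overline{A^{(2k)}}\subseteq\overline{A^{(2k-1)}}$, giving equality.

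\emph{Odd step: $\overline{A^{(2k-1)}}\subseteq\oA/m_k$.} By Definition \ref{defgau} and Theorem \ref{Gmat}, $\G_{2k-1}$ merely multiplies row $k$ by the real number $m_{k-1}/m_k$, so $A^{(2k-1)}_{kj}=|m_{k-1}/m_k|\,A^{(2k-2)}_{kj}$ while $A^{(2k-1)}_{ij}=A^{(2k-2)}_{ij}$ for $i\neq k$. For the unchanged rows the induction hypothesis gives $A^{(2k-2)}_{ij}\subseteq\oA/m_{k-1}$, and since $m_k/m_{k-1}$ is limited by Theorem \ref{propmm} we may reabsorb this: $\oA/m_{k-1}=\frac{m_k}{m_{k-1}}\cdot\frac{\oA}{m_k}\subseteq\pounds\cdot\frac{\oA}{m_k}=\oA/m_k$. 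For row $k$, the hypothesis together with the real identity $\frac{m_{k-1}}{m_k}\cdot\frac{1}{m_{k-1}}=\frac{1}{m_k}$ gives $A^{(2k-1)}_{kj}\subseteq\frac{m_{k-1}}{m_k}\cdot\frac{\oA}{m_{k-1}}=\oA/m_k$. Thus $\overline{A^{(2k-1)}}\subseteq\oA/m_k$, and combined with the even step, $\overline{A^{(2k)}}=\overline{A^{(2k-1)}}\subseteq\oA/m_k$, closing the induction.

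I expect the main obstacle to be purely the neutrix bookkeeping rather than any deep idea: one must be sure that multiplication by a real operation matrix acts on neutrices exactly by scaling each entry by the absolute value of the corresponding real coefficient and taking maxima, and one must apply the two quotient-of-minors estimates of Theorem \ref{propmm} at the right place. Indeed, the limitedness of $m_k/m_{k-1}$ is precisely what prevents the inductively accumulated imprecision $\oA/m_{k-1}$ from growing when it is carried into $\oA/m_k$, and the limitedness of the off-diagonal entries $a^{(2k-1)}_{ik}$ (Proposition \ref{limitofelement}) is what keeps the elimination step from enlarging the maximal neutrix.
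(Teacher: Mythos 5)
Your proof is correct and takes essentially the same route as the paper's: induction over the Gauss-Jordan steps (the paper phrases it as external induction), with the odd normalization step controlled by the limitedness of the minor quotients $m_{k}/m_{k-1}$ from Theorem \ref{propmm}, and the even elimination step leaving $\overline{A}$ invariant because the elimination coefficients are limited. The paper merely compresses your row-wise bookkeeping into the single inclusion $\overline{A^{(2k+1)}}\subseteq \dfrac{m_k}{m_{k+1}}\overline{A^{(2k)}}$ and cites Theorem \ref{tinhgioihancuasohang} (itself a consequence of Proposition \ref{limitofelement} and Theorem \ref{propmm}) where you invoke Proposition \ref{limitofelement} directly.
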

\begin{proof}
	We will apply external induction. For $k=1$, because $m_1=a_{11}=1$, one has $\overline{A^{(2k-1)}}=\overline{A^{(1)}}=\oA=\dfrac{\oA}{m_1}$. By Part \ref{cov}  of Theorem \ref{tinhgioihancuasohang}  it holds that $g^{(2k)}_{ij}=g^{(2)}_{ij}$ is limited for $1\leq i, j\leq n$, hence $\overline{A^{(2)}}=\overline{A^{(1)}}=\dfrac{\oA}{m_1}$. 
	
	As for the induction step, let $k<n$ and suppose that $\overline{A^{(2k-1)}}=\overline{A^{(2k)}}\subseteq \dfrac{\oA}{m_k}$. Then $\overline{A^{(2k+1)}}\subseteq \dfrac{m_k}{m_{k+1}}\overline{A^{(2k)}}\subseteq \dfrac{m_k}{m_{k+1}} \dfrac{\oA}{m_k}=\dfrac{\oA}{m_{k+1}}$. Again, by Part \ref{cov} of Theorem \ref{tinhgioihancuasohang} one has $\overline{A^{(2k+2)}}=\overline{A^{(2k+1)}}=\dfrac{\oA}{m_{k+1}}.$ 
\end{proof} 

\begin{proposition}\label{zeroless of Deltak}
	Let $\mathcal{A}=[\alpha_{ij}]_{n\times n}\in \M_{n}(\E)$ be  a reduced, non-singular stable matrix, which is properly arranged with respect to a matrix of representatives $P$. Let  $1\leq q\leq 2n$.
	Then 
	\begin{enumerate}
		\item \label{kzeroless} $\Delta^{(q)}$ is zeroless.
		\item \label{klimited}$\oslash\Delta<\Delta^{(q)}\subset \pounds$. 
		\item \label{kmax}$ \overline{A^{(q)}}\subseteq \oslash\Delta^{(q)}\subseteq \oslash $.
		\item \label{kstable}$ \A^{(q)} $ is limited, non-singular and stable.
	\end{enumerate}
\end{proposition}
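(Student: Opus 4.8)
The plan is to assemble the four statements directly from the auxiliary results already established, without a fresh induction (the inductions were carried out in Propositions \ref{limitofelement} and \ref{value of Ak}). Fix $q$ with $1\le q\le 2n$ and let $k$ be the index with $q=2k-1$ or $q=2k$. First I would record the two ingredients that drive everything. On the one hand, Proposition \ref{limitofelement} gives that every $a^{(q)}_{ij}$ is limited, so $\A^{(q)}$ is a limited matrix; Proposition \ref{cdt} then yields that $\Delta^{(q)}$ is limited and that $D^{(q)}=N(\Delta^{(q)})\subseteq \overline{A^{(q)}}$. On the other hand, stability of $\A$ means $R(\A)=\oA/\Delta\subseteq\oslash$ (Definition \ref{defrelun}), i.e. $\oA\subseteq\oslash\Delta$; combining this with Proposition \ref{value of Ak} gives $\overline{A^{(q)}}\subseteq \oA/m_k\subseteq \oslash\Delta/m_k$.

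The key computation is then to rewrite this last bound in terms of $d^{(q)}$. Since $\Delta$ is zeroless, $\oslash\Delta=\oslash d$, and Lemma \ref{lower bound of d2k} gives $|d^{(q)}|=|d/m_k|$, so $\oslash\Delta/m_k=\oslash|d^{(q)}|$. Thus $D^{(q)}\subseteq\overline{A^{(q)}}\subseteq\oslash|d^{(q)}|$, with $d^{(q)}\neq 0$ because $d=\det(P)\neq 0$ and the principal minors $m_k$ are nonzero ($P$ being non-singular and properly arranged, hence diagonally eliminable). This single inclusion yields Parts \ref{kzeroless} and \ref{kmax}: if $0\in\Delta^{(q)}=d^{(q)}+D^{(q)}$ then $-d^{(q)}\in D^{(q)}\subseteq\oslash|d^{(q)}|$, which is impossible for $d^{(q)}\neq0$, so $\Delta^{(q)}$ is zeroless (Part \ref{kzeroless}); and once $\Delta^{(q)}$ is zeroless we have $\oslash\Delta^{(q)}=\oslash|d^{(q)}|$, whence $\overline{A^{(q)}}\subseteq\oslash\Delta^{(q)}$, and $\oslash\Delta^{(q)}\subseteq\oslash$ because $\Delta^{(q)}$ is limited, giving Part \ref{kmax}.

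Part \ref{klimited} I would obtain by pairing the upper bound $\Delta^{(q)}\subset\pounds$ (from limitedness, Proposition \ref{cdt}) with the lower bound $|\Delta^{(q)}|=|d^{(q)}|=|d/m_k|>\oslash\Delta$ supplied by Lemma \ref{lower bound of d2k}, using that $D^{(q)}\subseteq\oslash\Delta^{(q)}$ is negligible against $d^{(q)}$; zerolessness from Part \ref{kzeroless} then lets one write this as $\oslash\Delta<\Delta^{(q)}\subset\pounds$. Finally Part \ref{kstable} is immediate: $\A^{(q)}$ is limited by Proposition \ref{limitofelement}, non-singular by Part \ref{kzeroless}, and stable because Part \ref{kmax} gives $R(\A^{(q)})=\overline{A^{(q)}}/\Delta^{(q)}\subseteq\oslash\Delta^{(q)}/\Delta^{(q)}=\oslash$.

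I expect the main obstacle to be purely in the neutrix bookkeeping rather than in any deep idea: one must justify the scaling identities $\oslash\Delta=\oslash d$ and $\oslash\Delta/m_k=\oslash|d^{(q)}|$ (via Proposition \ref{tcopt}.\ref{tcoptiv}) and, crucially, respect the logical order --- zerolessness of $\Delta^{(q)}$ must be proved before one is entitled to replace $\oslash\Delta^{(q)}$ by $\oslash|d^{(q)}|$. The appeal to $d^{(q)}\neq0$ through Proposition \ref{tcopt}.\ref{giao} (as in Lemma \ref{lower bound of d2k}) is what prevents the determinant from being absorbed into its own neutrix part.
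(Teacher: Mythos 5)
Your proposal is correct and follows essentially the same route as the paper's proof: both assemble the four parts from Propositions \ref{limitofelement}, \ref{cdt}, \ref{value of Ak} and Lemma \ref{lower bound of d2k}, deriving zerolessness from $D^{(q)}\subseteq\overline{A^{(q)}}\subseteq\oslash|d^{(q)}|$, limitedness plus the lower bound $\oslash\Delta$ for Part \ref{klimited}, the chain $\overline{A^{(q)}}\subseteq\oA/m_k\subseteq\oslash d/m_k=\oslash d^{(q)}=\oslash\Delta^{(q)}\subseteq\oslash$ for Part \ref{kmax}, and Part \ref{kstable} as an immediate consequence. Your explicit attention to the logical order (zerolessness before identifying $\oslash\Delta^{(q)}$ with $\oslash|d^{(q)}|$) matches the paper's structure, and the minor glossing in "every $a^{(q)}_{ij}$ is limited, so $\A^{(q)}$ is a limited matrix" (the neutrix parts also need bounding, which your own key computation supplies) is present in the paper as well.
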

\begin{proof}	
	\ref{kzeroless}.
	Let  $q=2k$ or $q=2k-1$ with $1\leq k\leq n$. By Lemma \ref{lower bound of d2k} one has $\left|d^{(q)}\right|=\left|\dfrac{d}{m_k}\right|$. Because the matrix is non-singular and stable, it holds that $\overline{A}\subseteq \oslash\Delta <|d|$, and because it is also reduced, it follows from Proposition \ref{value of Ak} that $\overline{A^{(q)}}\subseteq\dfrac{\oA}{m_k}$. Hence $\left|d^{(q)}\right|>\overline{A^{(q)}}$. Also  $D^{(q)}\subseteq\overline{A^{(q)}}$ by Proposition \ref{limitofelement} and Proposition \ref{cdt}. Hence $\Delta^{(q)}$ is zeroless. 
	
	\ref{klimited}.  We show first that $D^{(q)}\subseteq \oslash$. Indeed, suppose $\oslash\subset D^{(q)}$. Then $\pounds \subseteq D^{(q)}$. By Proposition \ref{cdt} and Proposition \ref{limitofelement} it holds that $d^{(q)}$ is limited. This implies that $\Delta^{(q)}$ is a neutrix, in contradiction to Part \ref{kzeroless}. Hence $D^{(q)}\subseteq \oslash$, which implies that $\Delta^{(q)}=d^{(q)}+D^{(q)}\subset \pounds$. It also follows from Part \ref{kzeroless} that $ \Delta^{(q)}\subseteq (1+\oslash)d^{(q)} $. Now $ d^{(q)}>\oslash \Delta $ by Lemma \ref{lower bound of d2k}, hence also $ \Delta^{(q)}>\oslash \Delta $.
	
	\ref{kmax}. Let $1\leq q\leq 2n$. Then $q=2k$ or $q=2k-1$ with $1\leq k\leq n$. By Proposition \ref{value of Ak}, the stability of the matrix $ \A $,    Lemma \ref{lower bound of d2k} and Part  \ref{klimited}, one has 	
	\begin{equation*}\label{AkDk}
		\overline{A^{(q)}}\subseteq \dfrac{\oA}{m_k}\subseteq \dfrac{\oslash d}{m_k}=\oslash d^{(q)}=\oslash \Delta^{(q)} \subseteq\oslash.
	\end{equation*}	

	\ref{kstable}. By Proposition \ref{limitofelement} the matrix $A^{(q)}$ is limited. By Part \ref{kzeroless} the matrix $\A^{(q)}$ is non-singular. Then $\A^{(q)}$ is stable by Part \ref{kmax}. 
\end{proof}

\begin{theorem}\label{thenearid} 	
	Let $\mathcal{A}\in \M_{n}(\E)$ be  a reduced, non-singular stable matrix, which is properly arranged with respect to a  matrix of representatives $P$. Then $\G^{P}(\A)$ is a near-identity matrix.
\end{theorem}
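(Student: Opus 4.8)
The plan is to verify directly the two requirements in the definition of a near-identity matrix for the final external matrix $\G^{P}(\A)=\A^{(2n)}$: that its representative matrix equals the identity $I_{n}$, and that its associated neutricial matrix is entrywise contained in $\oslash$. These two facts are handled by separate, already-established results, so the argument is essentially an assembly step.

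First I would identify the representative matrix of $\A^{(2n)}$. By the notation fixed in \eqref{defaqext} we have $\A^{(2n)}=P^{(2n)}+[A^{(2n)}_{ij}]_{n\times n}$, so the representative matrix of $\A^{(2n)}$ is precisely $P^{(2n)}$, the $2n$-th intermediate matrix of the \emph{real} Gauss-Jordan procedure applied to $P$. Since $P$ is non-singular and properly arranged, it is diagonally eliminable by Proposition \ref{diagonally}, so the explicit-formula result Theorem \ref{cmcttruyhoi} applies to $P$ and yields $P^{(2n)}=I_{n}$. This settles the first requirement.

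Next I would control the neutrix part. The entries of the associated neutricial matrix of $\A^{(2n)}$ are the $A^{(2n)}_{ij}$, and each satisfies $A^{(2n)}_{ij}\subseteq \overline{A^{(2n)}}$. Taking $q=2n$ in Proposition \ref{zeroless of Deltak}.\ref{kmax}, which combines stability with the lower bound on the intermediate determinants of Lemma \ref{lower bound of d2k} and the growth bound of Proposition \ref{value of Ak}, gives $\overline{A^{(2n)}}\subseteq \oslash\Delta^{(2n)}\subseteq \oslash$. Hence $A^{(2n)}_{ij}\subseteq \oslash$ for all $i,j$, so the associated neutricial matrix is contained in $[\oslash]_{n\times n}$. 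Combining the two parts, $\G^{P}(\A)=\A^{(2n)}$ is a near-identity matrix.

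The proof is short because essentially all the labor resides in the preceding lemmas, and there is no genuine computational obstacle left. The one point deserving care is conceptual: one must be sure that the Gauss-Jordan operations on the external matrix act on its representative part exactly as the real procedure acts on $P$, so that the representative matrix of $\A^{(2n)}$ really is $P^{(2n)}$ and Theorem \ref{cmcttruyhoi} may be invoked. This is guaranteed by \eqref{defaqext} together with the fact that the operation matrices $\G^{P}_{q}$ have real entries; consequently stability, which is what forces the neutrix parts to be infinitesimal, is needed only for the second requirement and plays no role in establishing that the representative matrix is the identity.
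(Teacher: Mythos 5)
Your proof is correct and follows essentially the same route as the paper's: both decompose $\G^{P}(\A)$ into its representative part, which equals $I_{n}$ by the explicit-formula result for the real Gauss--Jordan procedure, plus its neutricial part, and both bound that neutricial part by $\oslash$ via Part \ref{kmax} of Proposition \ref{zeroless of Deltak} with $q=2n$. The only cosmetic difference is that the paper justifies the splitting $\G^{P}(\A)=\G^{P}(P)+\G^{P}(A)$ by appealing to Proposition \ref{distributivitymatrix}, whereas you read the same splitting off from \eqref{defaqext} together with the observation that the operation matrices $\G^{P}_{q}$ have real entries.
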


\begin{proof}
	Let $A$ be the associated neutricial matrix of $\A$. By Proposition \ref{distributivitymatrix} we have $\G^P(\mathcal{A})=\mathcal{G}^P(P)+%
	\G^P(A)=I+A^{\prime }$, where $A^{\prime }=[A^{\prime }_{ij}]_{n\times n}$ is
	a neutricial matrix.  By Part \ref{kmax} of Proposition \ref{zeroless of Deltak} one has $A^{\prime }\subseteq [\oslash]_{n\times n}$. Hence $\G^{P}(\A)$ is a near-identity matrix.
\end{proof} 
We consider now the effect of the Gauss-Jordan procedure on the right-hand member of the system $ \A|\B $, where we always assume that the system satisfies Convention \ref{conv}. In fact, in contrast to the the possible increase of the neutrix parts of the coefficient matrix of a stable system, the pivots of the Gauss-Jordan procedure do not change the neutrix part of the right-hand member, and the same is true for the inverse procedure. The invariance of the neutrix part will be a consequence of the next proposition. 

\begin{proposition}
	\label{tisogiunguyen} Suppose that the flexible system $ \A|\B $ is properly arranged with respect to a representative matrix $P$. Assume that $ \Delta $ is not an absorber of $ B $. Then for all $ k $ such that $ 1\leq k\leq n-1 $
	\begin{equation*}
		\dfrac{m_{k+1}}{m_{k}}B=\dfrac{m_{k}}{m_{k+1}}B=B. 		
	\end{equation*}
\end{proposition}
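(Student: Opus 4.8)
The plan is to reduce the assertion about the external number $\Delta$ to one about its representative $d=\det(P)$, and then to exploit the telescoping factorisation of $d$ into the pivot-ratios $m_{j+1}/m_{j}$, each of which Theorem~\ref{propmm} forces to be limited. Throughout I use that, by Convention~\ref{conv}, $\A$ is reduced and non-singular, so $\Delta=d+D$ is zeroless and $d=\det(P)\neq 0$, and that $\Delta$ is limited by Proposition~\ref{cdt}.

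First I would observe that, since $\Delta$ is zeroless, Proposition~\ref{tcopt}.\ref{tcoptiv} gives $\Delta B=dB$. As $d$ is limited we have $dB\subseteq\pounds B=B$, so $d$ fails to absorb $B$ precisely when $dB=B$; the same then holds for $\Delta$. Hence the hypothesis that $\Delta$ is not an absorber of $B$ is exactly the statement $dB=B$, and the whole problem is transferred to the real number $d$.

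Next I would write $d$ as a telescoping product of pivot-ratios. Since $m_{0}=1$ and $m_{n}=\det(P)=d$, we have $d=\prod_{j=0}^{n-1} m_{j+1}/m_{j}$. Fix $k$ with $1\leq k\leq n-1$, put $r=m_{k+1}/m_{k}$ and $c=\prod_{j\neq k} m_{j+1}/m_{j}$, so that $d=cr$. By \eqref{mk+1mk} each factor $m_{j+1}/m_{j}$ is limited, and since there are standardly many ($n-1$) of them, $c$ is a limited real number; it is nonzero because $P$ is diagonally eliminable, so all $m_{j}\neq 0$. The real number $r$ is likewise limited by \eqref{mk+1mk}, whence $rB\subseteq B$. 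It remains to upgrade this to $rB=B$, which I would do by a short cancellation argument: from $rB\subseteq B$ and $dB=c(rB)$ I get $B=dB\subseteq cB\subseteq\pounds B=B$, so $cB=B$; then $c(rB)=dB=B=cB$, and cancelling the nonzero real factor $c$ (multiplication by $1/c$ is a bijection of $\R$) yields $rB=B$. Thus $r=m_{k+1}/m_{k}$ is limited and not an absorber of $B$, so Proposition~\ref{tcopt}.\ref{lei3} delivers at once $\frac{m_{k+1}}{m_{k}}B=\frac{m_{k}}{m_{k+1}}B=B$, which is the claim.

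The hard part is the handling of the inclusions under multiplication by the \emph{possibly infinitesimal} real $c$: the right viewpoint is that $c\neq 0$ makes multiplication by $c$ a bijection of $\R$, so it preserves equalities (and, if one argues by contradiction, strict inclusions) of neutrices, and this is exactly what powers the cancellation $cB=c(rB)\Rightarrow B=rB$. The two observations that make everything else routine are the reduction $\Delta B=dB$ via Proposition~\ref{tcopt}.\ref{tcoptiv} and the telescoping of $d$ through the limited pivot-ratios supplied by Theorem~\ref{propmm}.
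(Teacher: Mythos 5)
Your proof is correct, and it reaches the crucial point --- that the pivot ratio $m_{k+1}/m_{k}$ is limited and not an absorber of $B$ --- by a genuinely different mechanism than the paper. The paper's proof is three lines: it quotes the full two-sided bound \eqref{mk+1mk} of Theorem \ref{propmm}, namely $\oslash\Delta < \left|m_{k+1}/m_{k}\right| \in \pounds$, and the \emph{lower} bound, combined with the hypothesis that $\Delta$ is not an absorber of $B$, makes the pivot ratio non-absorbing, after which Proposition \ref{tcopt}.\ref{lei3} finishes exactly as in your last step. You instead use only the upper (limitedness) half of \eqref{mk+1mk} --- which already follows from Proposition \ref{limitofelement} --- and recover non-absorption from the telescoping factorization $d=\prod_{j=0}^{n-1} m_{j+1}/m_{j}$ together with cancellation of the nonzero limited cofactor $c$; the cancellation itself is sound, since multiplication by a nonzero real is a bijection of $\R$ and hence preserves equalities of sets. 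What your route buys: it bypasses the lower bound $\left|m_{k+1}/m_{k}\right|>\oslash\Delta$, whose proof rests on the determinant estimate of Lemma \ref{lower bound of d2k}, so your argument is more self-contained and shows that this proposition needs strictly less than the full strength of Theorem \ref{propmm}. What the paper's route buys: brevity (Theorem \ref{propmm} is needed elsewhere anyway) and a local, per-pivot reason for non-absorption rather than a global multiplicative one. One cosmetic remark: your product includes the factor $j=0$, which is not covered by \eqref{mk+1mk}; it is harmless because $m_{1}/m_{0}=a_{11}=1$ for a reduced representative matrix, but this should be said.
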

\begin{proof}
	Let $1\leq k\leq n-1$. By formula \eqref{mk+1mk} it holds 
	that $\oslash\Delta <%
	\begin{vmatrix}
		\dfrac{m_{k+1}}{m_{k}}%
	\end{vmatrix}
	\in \pounds$. The fact that $\Delta$ is not an absorber of $B$ and Proposition \ref{tcopt}.\ref{lei3} imply that $ \dfrac{m_{k+1}}{m_{k}} B=B $.
	It follows that $\dfrac{m_{k}}{m_{k+1}}
	B=B  $ for $  1\leq k\leq n-1$.
\end{proof}

When applying the inverse Gauss-Jordan procedure to the right-hand member of the flexible system $ \G^{P}\A|\G^{P}\B $, we define for $ 1\leq q\leq 2n $
$$ [B]^{(-q)}=\left( \left( \G^{P} _q\right)^{-1}\left( \left( \G^{P}_{q+1}\right)^{-1} \cdots \left(  \left( \G^{P}_{2n}\right)^{-1}[B]\right)\right)\right)  \notag $$	
and 
$$ \left( \G^{P}\right) ^{-1}[B] =\left( \left( \G^{P} _1\right)^{-1}\left( \left( \G^{P}_{2}\right)^{-1} \cdots \left(  \left( \G^{P}_{2n}\right)^{-1}[B]\right)\right)\right).$$
\begin{theorem}
	\label{qhnt}  Suppose that the flexible system $ \A|\B $ is  properly arranged with respect to a representative matrix $P$. Assume that $ \Delta $ is not an absorber of $ B $. Then for all $ q $ such that $1\leq q\leq 2n$ one has $ 	[B]^{(q)}= [B]  $ and  $ [B]^{(-q)}=[B] .$ In particular $\G^P[B]=[B]$ and $\left(\G^P\right)^{-1}[B]=[B].$
\end{theorem}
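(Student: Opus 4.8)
The plan is to establish both identities by induction on the single Gauss-Jordan step, reducing everything to the behaviour of one elementary matrix $\G^P_q$ (or its inverse) on a column vector all of whose entries equal the neutrix $B$. Throughout I would use the elementary facts that $\pounds B=B$ and that, for any limited real number $c$, one has $cB\subseteq B$ and hence $B+cB=B$ (since $0\in cB$); these let me absorb any limited coefficient produced by an operation.

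For the forward direction I would prove $[B]^{(q)}=[B]$ by external induction on $q$, with trivial base $[B]^{(0)}=[B]$ and step $[B]^{(q)}=\G^P_q\big([B]^{(q-1)}\big)$, the parenthesisation matching Notation \ref{notpropar} exactly so that no re-association is needed. When $q=2k+1$ is odd, $\G^P_{2k+1}$ merely scales the $(k+1)$-th coordinate by the pivot $m_k/m_{k+1}$ (Theorem \ref{Gmat}) and leaves the others untouched; for $k=0$ this factor is $m_0/m_1=1$, while for $1\le k\le n-1$ Proposition \ref{tisogiunguyen} gives $\frac{m_k}{m_{k+1}}B=B$, so the scaled coordinate is again $B$. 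When $q=2k+2$ is even, row $k+1$ of $\G^P_{2k+2}$ is the unit vector and each other row replaces $v_i$ by $v_i-a^{(2k+1)}_{ik+1}v_{k+1}$ (Definition \ref{defgau}); since $a^{(2k+1)}_{ik+1}$ is limited (Proposition \ref{limitofelement}, or Theorem \ref{tinhgioihancuasohang}.\ref{cov}), this equals $B-(\text{limited})B=B$. Hence every step preserves $[B]$.

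For the inverse direction I would run a downward induction, formally setting $[B]^{(-(2n+1))}=[B]$ and using $[B]^{(-q)}=(\G^P_q)^{-1}\big([B]^{(-(q+1))}\big)$. The computation is identical in spirit: the odd inverse $(\G^P_{2k+1})^{-1}$ scales coordinate $k+1$ by $m_{k+1}/m_k$, and Proposition \ref{tisogiunguyen} gives $\frac{m_{k+1}}{m_k}B=B$, while the even inverse $(\G^P_{2k+2})^{-1}$ has only limited entries by Theorem \ref{tinhgioihancuasohang}.\ref{contrav}, so the same absorption argument applies. This yields $[B]^{(-q)}=[B]$ for all $1\le q\le 2n$. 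The two final claims then follow: $(\G^P)^{-1}[B]$ is by definition the fully nested expression $[B]^{(-1)}$, hence equals $[B]$; and $\G^P[B]=[B]^{(2n)}=[B]$, reading $\G^P$ as the successive application of the operations. If one instead insists on the single matrix product, then $\G^P[B]\subseteq[B]^{(2n)}=[B]$ follows by repeated subassociativity with real left factors (Proposition \ref{subassociativity}.\ref{inclusion1}), while the reverse inclusion comes from $[B]=(\G^P P)[B]\subseteq\G^P(P[B])\subseteq\G^P[B]$, using $\G^P P=I$, monotonicity (Proposition \ref{matincl}) and $P[B]\subseteq[B]$ (as $P$ is reduced), giving equality.

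I expect the main obstacle to lie in the odd steps rather than the routine even ones. The pivots $m_k/m_{k+1}$ may be unlimited, of order $1/\Delta$, so a priori they could blow $B$ up; the entire point is that the hypothesis that $\Delta$ is not an absorber of $B$ — exactly what Proposition \ref{tisogiunguyen} converts into $\frac{m_k}{m_{k+1}}B=\frac{m_{k+1}}{m_k}B=B$ — is precisely what keeps the imprecision of the right-hand member invariant. A secondary subtlety is the mild mismatch between the matrix product $\G^P[B]$ and the iterated application $[B]^{(2n)}$, which is only an inclusion in general and needs the $\G^P P=I$ sandwich above to be upgraded to equality.
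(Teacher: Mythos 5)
Your proof is correct and follows essentially the same route as the paper: external induction over the Gauss--Jordan steps, with Proposition \ref{tisogiunguyen} absorbing the pivots $m_k/m_{k+1}$ (resp.\ $m_{k+1}/m_k$) in the odd steps, and limitedness of the entries of the even-index matrices and their inverses (Theorem \ref{tinhgioihancuasohang}) together with $\pounds B=B$ in the even steps, the inverse procedure being handled by the same two-case argument (the paper simply declares it ``similar''). Your additional sandwich argument identifying the single matrix product $\G^{P}[B]$ with the iterated application $[B]^{(2n)}$ via subassociativity, $\G^{P}P=I$ and $P[B]\subseteq [B]$ addresses an associativity subtlety that the paper passes over silently; this is a welcome refinement of the same proof rather than a different one.
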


\begin{proof}
	We will apply External Induction. Because $a_{11}=1$, we have $ [B]^{(1)}=\mathcal{G}^{P}_1[B]=I [B]=[B] $.
	
	As for the induction step, let $ q< 2n $ and suppose that $[B]^{(q)}=[B].$
	We consider two cases.
	
	Case 1: $q+1=2k+1$ for some $k\in \{1,\dots,n-1\}$. By the induction
	hypothesis and Proposition \ref{tisogiunguyen} we have 
	\begin{align*}
		&[B]^{(q+1)}=\mathcal{G}^{P}%
		_{q+1} [B]^{(q)}= \mathcal{G}^{P}_{2k+1} [B] \\
		=& 
		\begin{bmatrix}
			1 & 0 & \cdots & 0 & \cdots & 0 \\ 
			0 & 1 & \cdots & 0 & \cdots & 0 \\ 
			\vdots & \vdots & \ddots & \vdots & \ddots & \vdots \\ 
			0 & 0 & \cdots & \dfrac{m_{k}}{m_{k+1}} & \cdots & 0 \\ 
			\vdots & \vdots & \ddots & \vdots & \ddots & \vdots \\ 
			0 & 0 & \cdots & 0 & \cdots & 1%
		\end{bmatrix}%
		\cdot 
		\begin{bmatrix}
			B \\ 
			\vdots \\ 
			B%
		\end{bmatrix}
		=%
		\begin{bmatrix}
			B \\ 
			\vdots \\ 
			B%
		\end{bmatrix}.
	\end{align*}
	Case 2: $q+1=2k+2$ for some $k\in \{1,\dots, n-1\}$. By Theorem %
	\ref{tinhgioihancuasohang}\eqref{cov} all entries of the matrix $ \G^{P}_{2k+2} $ are limited, and the elements of its diagonal are equal to $1$. Then it follows from Case 1 that	
	\begin{equation*}
		[B]^{(q+1)}=\mathcal{G}^{P}%
		_{q+1} [B]^{(q)}= \mathcal{G}^{P}_{2k+2} [B]^{(2k+1)}= \mathcal{G}^{P}_{2k+2} [B]=[B]. 
	\end{equation*}
	In particular $ \G^{P}[B] =[B]^{(2n)}$. This proves the theorem for the Gauss-Jordan procedure $ \G^{P} $. The proof for the inverse procedure is similar.	
\end{proof}

\begin{proposition}\label{Imme1} 
	Suppose that the flexible system $ \A|\B $ is properly arranged with respect to a representative matrix $P$ and stable. Then $$\left(\mathcal{G}^P\right)^{-1}\big(\mathcal{G}^P\B\big)=\B.$$
\end{proposition}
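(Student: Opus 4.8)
The plan is to exploit that every Gauss-Jordan operation matrix has real entries, so that $\G^P$ and $(\G^P)^{-1}$ are genuine real matrices whose ordinary product is the identity $I_n$. Since the system is uniform, I would first split the right-hand member as $\B=b+[B]$, where $b=(b_1,\dots,b_n)^T$ is the representative vector and $[B]=(B,\dots,B)^T$ is the associated neutricial vector. The governing idea is to treat the representative part $b$, which is purely real, by classical linear algebra, and to treat the neutricial part $[B]$ by the invariance result of Theorem \ref{qhnt}.

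Concretely, the key steps run as follows. Because the entries of $\G^P$ are real, multiplication by a real number distributes over external sums (the special case recorded after Theorem \ref{tcopti3}), so applying it entrywise gives $\G^P\B=\G^P b+\G^P[B]$. Stability supplies, via Condition \ref{dn4n} of Definition \ref{dn Gauss-Jordan eliminable-new} (with $\underline{B}=B$ since the system is uniform), that $\Delta$ is not an absorber of $B$; together with proper arrangement this lets me invoke Theorem \ref{qhnt} to obtain $\G^P[B]=[B]$, whence $\G^P\B=\G^P b+[B]$ with $\G^P b$ an ordinary real vector. Applying $(\G^P)^{-1}$ and distributing once more, $(\G^P)^{-1}(\G^P\B)=(\G^P)^{-1}(\G^P b)+(\G^P)^{-1}[B]$. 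The first summand equals $b$ by ordinary real linear algebra, since $(\G^P)^{-1}\G^P=I_n$ as real matrices acting on the real vector $b$; the second equals $[B]$ again by Theorem \ref{qhnt}. Reassembling yields $(\G^P)^{-1}(\G^P\B)=b+[B]=\B$.

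The main obstacle is exactly the reason one cannot argue in a single line. Multiplication of matrices over external numbers is only subassociative, and the associativity conditions at hand (Proposition \ref{subassociativity} and Proposition \ref{combinetheorem}) do not cover the bracketing $(\G^P)^{-1}\bigl(\G^P\B\bigr)$ with the external vector $\B$ sitting on the inside, so the tempting collapse $(\G^P)^{-1}(\G^P\B)=\bigl((\G^P)^{-1}\G^P\bigr)\B=\B$ is not justified outright. In fact Proposition \ref{subassociativity}.\ref{inclusion1}, applied with the real matrix $(\G^P)^{-1}$ on the outside, only yields the one-sided inclusion $\B\subseteq(\G^P)^{-1}(\G^P\B)$, while the reverse inclusion is the delicate one. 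Splitting $\B$ into its real and neutricial components is precisely what removes the difficulty: on the real part associativity is unproblematic, and on the neutricial part the procedure and its inverse leave $[B]$ fixed by Theorem \ref{qhnt}. It is worth stressing that stability enters the argument only through this last ingredient, namely the hypothesis that $\Delta$ is not an absorber of $B$.
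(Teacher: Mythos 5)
Your proposal is correct and follows essentially the same route as the paper's own proof: decompose $\B=b+[B]$, distribute the real matrices $\G^P$ and $(\G^P)^{-1}$ over this sum (Proposition \ref{distributivitymatrix}), reduce the neutricial part with Theorem \ref{qhnt} on both the forward and inverse procedures, and collapse the real part via ordinary associativity $\bigl((\G^P)^{-1}\G^P\bigr)b=b$. Your added observations---that subassociativity alone gives only $\B\subseteq(\G^P)^{-1}\bigl(\G^P\B\bigr)$ and that stability is used only through the non-absorber condition feeding Theorem \ref{qhnt}---are accurate but supplementary to the same argument.
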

\begin{proof} Let $\B=b+B$. 
	By Proposition \ref{distributivitymatrix} and Theorem \ref{qhnt}, 
	\begin{align*}
		\left(\mathcal{G}^P\right)^{-1}\big(\mathcal{G}^P\B\big)=&\left(\mathcal{G}^P\right)^{-1}\big(\mathcal{G}^P(b+B)\big)
		=\left(\mathcal{G}^P\right)^{-1}\big(\mathcal{G}^Pb+\mathcal{G}^PB\big)\\
		=&\left(\mathcal{G}^P\right)^{-1}\big(\mathcal{G}^Pb+B\big)
		=\left(\mathcal{G}^P\right)^{-1}\big(\mathcal{G}^Pb\big)+\left(\mathcal{G}^P\right)^{-1}B\\
		=&\Big(\left(\mathcal{G}^P\right)^{-1}\mathcal{G}^P\Big)b+B
		=b+B=\B.
	\end{align*} 
\end{proof} 
The neutrix part of the right-hand member is also invariant by multiplying and dividing by the determinants $ \Delta $ and $ \Delta^{(q)} $. This is shown in 
Proposition \ref{divdeltak}.

\begin{proposition}\label{divdeltak} For stable systems $ \A|\B $ it holds that 
	\begin{equation}\label{DeltaB}
		\Delta B=\frac{B}{\Delta}=B.
	\end{equation}
	Moreover, for $ 1\leq q\leq 2n $ the determinant $ \Delta^{(q)} $ is not an absorber of $ B $, and 
	\begin{equation}\label{DeltakB}
		\Delta^{(q)} B=\frac{B}{\Delta^{(q)}}=B.
	\end{equation}
\end{proposition}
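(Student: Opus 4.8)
The plan is to handle the two displayed identities separately. Equation \eqref{DeltaB} follows immediately once the hypotheses are matched to Proposition \ref{tcopt}.\ref{lei3}: since $\A$ is non-singular, $\Delta$ is zeroless; since $\A$ is limited, $\Delta$ is limited by Proposition \ref{cdt}; and, the system being uniform by Convention \ref{conv} so that $\underline{B}=B$, condition \ref{dn4n} of Definition \ref{dn Gauss-Jordan eliminable-new} states precisely that $\Delta$ is not an absorber of $B$. Proposition \ref{tcopt}.\ref{lei3} then gives $\Delta B=B/\Delta=B$.

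The identity \eqref{DeltakB} rests on the fact that every principal minor preserves $B$. First I would recall from the computation in the proof of Lemma \ref{lower bound of d2k} that, writing $q=2k-1$ or $q=2k$, the representative of $\Delta^{(q)}$ equals $d^{(q)}=d/m_{k}$. Next, since $\Delta$ is not an absorber of $B$, Proposition \ref{tisogiunguyen} yields $\frac{m_{j+1}}{m_{j}}B=\frac{m_{j}}{m_{j+1}}B=B$ for $1\leq j\leq n-1$. Expressing $m_{k}/m_{1}$ and $m_{1}/m_{k}$ as telescoping products of these consecutive ratios, using $m_{1}=1$ (the matrix being reduced), and regrouping the real scalars over the group $B$, an external induction on $k$ gives $m_{k}B=B$ and $B/m_{k}=B$ for every $k$ with $1\leq k\leq n$.

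It then remains to assemble these facts. By Proposition \ref{zeroless of Deltak}.\ref{kzeroless} the number $\Delta^{(q)}$ is zeroless with representative $d^{(q)}=d/m_{k}$, so Proposition \ref{tcopt}.\ref{tcoptiv} gives $\Delta^{(q)}B=d^{(q)}B=\frac{d}{m_{k}}B=d\big(\frac{1}{m_{k}}B\big)$. Substituting $\frac{1}{m_{k}}B=B$ and then $dB=\Delta B=B$ from \eqref{DeltaB} collapses this to $B$; the computation $B/\Delta^{(q)}=\frac{m_{k}}{d}B=m_{k}(B/\Delta)=m_{k}B=B$ is symmetric. Finally, because $\Delta^{(q)}B=B$ is not a strict subset of $B$, the number $\Delta^{(q)}$ is not an absorber of $B$, which completes \eqref{DeltakB}.

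The step I expect to be the crux is the invariance $m_{k}B=B$. One cannot invoke Proposition \ref{tcopt}.\ref{lei3} directly for $\Delta^{(q)}$, since its non-absorber property is not assumed but is exactly what has to be proved; the argument must instead be routed through the consecutive principal-minor ratios of Proposition \ref{tisogiunguyen} and a telescoping in which multiplication by the real scalars $m_{j+1}/m_{j}$ is carefully regrouped over the neutrix $B$. Once $m_{k}B=B$ is established, the remainder is routine bookkeeping with zerolessness and Proposition \ref{tcopt}.\ref{tcoptiv}.
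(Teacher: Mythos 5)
Your proof of \eqref{DeltaB} is exactly the paper's: stability (with uniformity giving $\underline{B}=B$) yields that $\Delta$ is zeroless, limited by Proposition \ref{cdt}, and not an absorber of $B$, so Proposition \ref{tcopt}.\ref{lei3} concludes. For \eqref{DeltakB} you take a genuinely different route. The paper argues at the level of orders of magnitude: by Proposition \ref{zeroless of Deltak}.\ref{klimited} one has $\oslash\Delta<\Delta^{(q)}\subset\pounds$, and since $\Delta$ is not an absorber of $B$, this bound implies that $\Delta^{(q)}$ is limited and also not an absorber of $B$; then Proposition \ref{tcopt}.\ref{lei3} is applied a second time. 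So in the paper the non-absorber property of $\Delta^{(q)}$ is established first and the identity \eqref{DeltakB} follows from it, whereas you compute $\Delta^{(q)}B$ explicitly and read off non-absorption afterwards: using the exact representative $d^{(q)}=d/m_{k}$ from Lemma \ref{lower bound of d2k}, the telescoping $m_{k}=\prod_{j<k}\left(m_{j+1}/m_{j}\right)$ with $m_{1}=1$, Proposition \ref{tisogiunguyen} to push $B$ through each ratio, and external induction, you get $m_{k}B=B/m_{k}=B$, hence by Proposition \ref{zeroless of Deltak}.\ref{kzeroless} and Proposition \ref{tcopt}.\ref{tcoptiv}, $\Delta^{(q)}B=dB=B$ and $B/\Delta^{(q)}=m_{k}B=B$. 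Your argument is correct; it is essentially a multiplicative re-derivation of the magnitude estimate that the paper imports wholesale from Proposition \ref{zeroless of Deltak}, so it is somewhat longer but more self-contained, exhibits the representative $d/m_{k}$ explicitly, and avoids the (short but implicit) step of arguing that an external number exceeding $\oslash\Delta$ and contained in $\pounds$ cannot absorb $B$ when $\Delta$ does not. One small overstatement: your closing claim that Proposition \ref{tcopt}.\ref{lei3} \emph{cannot} be invoked directly for $\Delta^{(q)}$ is not quite right --- the paper does invoke it, after first securing the non-absorber property from the magnitude bound rather than from a product computation.
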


\begin{proof}
	It follows from the fact that $ \Delta $ is zeroless and Proposition \ref{cdt} that $ \oslash \Delta <\Delta\subset \pounds $. Also $ \Delta $ is not an absorber of $ B $. Then \eqref{DeltaB} follows from Proposition \ref{tcopt}.\ref{lei3}.  By Proposition \ref{zeroless of Deltak}.\ref{klimited} we have $ \oslash \Delta <\Delta^{(q)}\subset \pounds $. Then also $ \Delta^{(q)} $ is not an absorber of $ B $, hence \eqref{DeltakB} holds by Proposition \ref{tcopt}.\ref{lei3}.
\end{proof}

We are now able to prove that the Gauss-Jordan operations respect the stability property.

\begin{proof} [Proof of Theorem \ref{lmqhnt}] \ref{p1main}. Assume that the system $ \A|\B $ is stable. Let $ 1\leq q \leq 2n $. By Proposition \ref{zeroless of Deltak}\eqref{kstable} the matrix $ \A^{(q)} $ is stable. By Theorem \ref{qhnt} the system $ \A^{(q)}|\B^{(q)} $ is uniform with $ [B]^{(q)}=[B] $. Then $ \Delta^{(q)} $ is not an absorber of $ [B]^{(q)} $ by Proposition \ref{divdeltak}. We still need to show that		
	\begin{equation}\label{stabk}
		R(\A^{(q)})\subseteq R(\B^{(q)}).
	\end{equation}
	Observe that $ R(\A^{(q)}) $ is well-defined, because $ \Delta^{(q)} $ is zeroless by Proposition \ref{zeroless of Deltak}.\ref{kzeroless}.
	
	We show first that for  $ 1\leq q\leq 2n $
	\begin{equation}\label{max neutrix bound}
		\overline{A^{(q) }}\hspace{0.1cm}\overline{\beta^{(q)}}\subseteq B.
	\end{equation}	
	In order to derive \eqref{max neutrix bound}, we show by external induction that  for  $ 0\leq q\leq 2n $ and $1\leq i, j\leq n$
	\begin{equation}	\label{neutrix bound}
		A^{(q)}_{ij}\overline{\beta}^{(q)}\subseteq B.
	\end{equation}
	For $q=0 $ we have by stability, \eqref{rbeta} and \eqref{DeltaB}
	\begin{equation*}
		A^{(0)}_{ij}\overline{\beta^{(0)}}\subseteq\oA \overline{\beta}\subseteq \Delta R(\A) \overline{\beta}\subseteq \Delta R(\B)\overline{\beta}	\subseteq \Delta B=B.
	\end{equation*} 
	Assuming that the property \eqref{neutrix bound} holds for $q<2n$, we will prove it for $q+1$. Because $\overline{\beta^{(q+1)}}=\beta_p^{(q+1)}$
	for some $p\in \{1,\dots, n\}$, 
	\begin{equation*}
		\left|\overline{\beta^{(q+1)}}\right|=\left|\beta^{(q+1)}_p\right|=\left|\sum \limits_{j=1}^n
		g^{(q+1)}_{pj}\beta_j^{(q)}\right|\leq \sum \limits_{j=1}^n
		\left|g_{pj}^{(q+1)}\right|\left|\beta^{(q)}_j\right|\leq \sum \limits_{j=1}^n
		\left|g_{pj}^{(q+1)}\right|\left| \overline{\beta^{(q)}}\right|.
	\end{equation*}
	Also  for $ 1\leq i,j\leq n $
	\begin{equation}  \label{eqdetermimedofA}
		A_{ij}^{(q+1)}=g^{(q+1)}_{i1}A^{(q)}_{1j}+\cdots+g_{in}^{(q+1)}A_{nj}^{(q)}.
	\end{equation}
	If $q+1=2k+2$ for some $k\in \{1, \dots, n-1\}$, by 
	Theorem \ref{tinhgioihancuasohang} and the induction hypothesis one has 
	\begin{alignat*}{2}
		A_{ij}^{(q+1)}\overline{\beta^{(q+1)}}\subseteq &\Big(%
		g^{(q+1)}_{i1}A^{(q)}_{1j}+\cdots+g_{in}^{(q+1)}A_{nj}^{(q)}\Big)\Big(\sum
		\limits_{j=1}^n \left|g_{ij}^{(q+1)}\right| \left| \overline{\beta^{(q)}}\right| \Big) &  \\
		=&\left(g^{(q+1)}_{i1}A^{(q)}_{1j}\overline{\beta^{(q)}}+%
		\cdots+g_{in}^{(q+1)}A_{nj}^{(q)}\overline{\beta^{(q)}}\right)\left(\sum%
		\limits_{j=1}^n \left|g_{ij}^{(q+1)}\right|\right) &  \\
		\subseteq &(\pounds B+\cdots+\pounds B)\pounds\subseteq B. 
	\end{alignat*}	
	If $q+1=2k+1$ for some $k\in \{1, \dots, n-1\}$, we consider separately the cases $i\not=k+1$ and $i=k+1$.
	
	Case 1: For $i\not=k+1$ and $1\leq i\leq n$, the row $g^{(q+1)}_{i} $ is a
	unit vector, so the neutrices of the $i^{th}$ row of $\mathcal{A}^{(q+1)}$ satisfy $%
	A^{(q+1)}_{ij}=A^{(q)}_{ij}$ for $ 1\leq j\leq n $. Also 
	\begin{equation*}
		\beta^{(q+1)}=\left(\beta^{(q)}_1, \dots, \beta^{(q)}_k, \dfrac{m_{k}}{m_{k+1}}%
		\beta^{(q)}_{k+1}, \beta^{(q)}_{k+2}, \dots, \beta^{(q)}_n\right).
	\end{equation*}
	If $\overline{\beta^{(q+1)}}=\beta^{(q)}_s$ for some $s\in \{1, \dots, n\}
	\setminus \{k+1\}$,  for  $i\not=k+1, 1\leq i\leq n$ and $1\leq j\leq
	n $ one has by the induction hypothesis
	\begin{equation*}
		A_{ij}^{(q+1)}\overline{\beta^{(q+1)}} =A_{ij}^{(q)}\beta^{(q)}_s\subseteq
		A_{ij}^{(q)}\overline{\beta^{(q)}}\subseteq B.
	\end{equation*}		
	If $\overline{\beta^{(q+1)}}= \dfrac{m_{k}}{m_{k+1}}\beta^{(q)}_{k+1}$,
	then for $i\not=k+1, 1\leq i\leq n$ and $1\leq j\leq n$ it follows from the induction hypothesis and Proposition \ref{tisogiunguyen} that
	\begin{equation*}
		A_{ij}^{(q+1)}\overline{\beta^{(q+1)}} =A^{(q)}_{ij}\dfrac{m_{k}}{m_{k+1}}%
		\beta^{(q)}_{k+1} \subseteq \dfrac{m_{k}}{m_{k+1}}B=B.
	\end{equation*}%
	
	Case 2: For $i=k+1$, by formula \eqref{eqdetermimedofA} one has for $ 1\leq j\leq n $
	\begin{equation*}
		A^{(q+1)}_{k+1j}=A_{k+1j}^{(q)}\dfrac{m_{k}}{m_{k+1}}.
	\end{equation*}
	If $\overline{\beta^{(q+1)}}=\beta^{(q)}_s$ for some $s\in \{1, \dots, n\}
	\setminus \{k+1\}$, due to Proposition  \ref{tisogiunguyen} one has for $1\leq j\leq n$ 	
	\begin{equation*}
		A_{k+1j}^{(q+1)}\overline{\beta^{(q+1)}} =\dfrac{m_{k}}{m_{k+1}}%
		A_{k+1j}^{(q)}\beta^{(q)}_s\subseteq \dfrac{m_{k}}{m_{k+1}}A_{k+1j}^{(q)}%
		\overline{\beta^{(q)}}\subseteq \dfrac{m_{k}}{m_{k+1}} B=B.
	\end{equation*}		
	If $\overline{\beta^{(q+1)}}= \dfrac{m_{k}}{m_{k+1}}\beta^{(q)}_{k+1}$,
	again using Proposition \ref{tisogiunguyen} we find for  $1\leq j\leq n$ 
	\begin{equation*}
		A_{k+1j}^{(q+1)}\overline{\beta^{(q+1)}} =\dfrac{m_{k}}{m_{k+1}}%
		A^{(q)}_{k+1j}\dfrac{m_{k}}{m_{k+1}}\beta^{(q)}_{k+1} \subseteq \Big(%
		\dfrac{m_{k}}{m_{k+1}}\Big)^2A_{k+1j}^{(q)}\overline{\beta^{(q)}}\subseteq %
		\Big(\dfrac{m_{k}}{m_{k+1}}\Big)^2B=B.
	\end{equation*}		
	Combining, we see that property \eqref{neutrix bound} holds for all $ q $ such that $0\leq q\leq 2n.$ 
	
	Formula \eqref{max neutrix bound} follows directly from \eqref{neutrix bound}.
	
	To finish the proof, we consider separately the cases that $\overline{\beta^{(q) }}$ is zeroless and that $\overline{\beta^{(q)}}=B$ is neutricial. 
	If $\overline{\beta^{(q) }}$ is zeroless, 
	by \eqref{max neutrix bound} and Proposition \ref{divdeltak}
	\begin{equation*}
		R(\mathcal{A}^{(q)})=\frac{\overline{A^{(q)}}}{\Delta^{(q)}}\subseteq\frac{1}{\Delta^{(q)}}\frac{B}{\overline{\beta^{(q)}}}=\frac{B}{\overline{\beta^{(q)}}}=R(\B^{(q)}).
	\end{equation*}		
	If 
	$\overline{\beta^{(q)}}=B$ is neutricial, formula \eqref{max neutrix bound} takes the form 	$\overline{A^{(q)}} B\subseteq 	B$. Then 
	$$R(\mathcal{A}^{(q)})B=%
	\overline{A^{(q)}}\frac{B}{\Delta^{(q)}}=\overline{A^{(q)}}B\subseteq B.$$
	We conclude from Theorem \ref{qhnt} that $R(\mathcal{A}^{(q)})\subseteq B:B=R([B^{{(q)}}])=R(\B^{{(q)}})$.
	
	\ref{p2main}. By setting $ q=2n $ we obtain from Part \ref{p1main} that the final system $\mathcal{G}^P\mathcal{A})|\G^P\mathcal{B}$ is stable, while	$ \G^{P}A $ is a near-identity matrix by Theorem \ref{thenearid}.
\end{proof}

\section{Stability and Cramer's rule}\label{Cramer}

By Theorem 4.4 of \cite{Jus} Cramer's rule in the form  \eqref{forcra} solves non-singular reduced uniform non-homogeneous stable systems. Here we extend the proof to homogeneous systems. We continue to adopt Convention \ref{conv} (the fact that $ \A $ is properly arranged is not essential here) and prove the following theorem.

\begin{theorem}[Cramer's Rule for flexible systems]\label{thecra}
	If the system $ \A|\B $ is stable, its solution is given by the external vector \eqref{forcra}.
\end{theorem}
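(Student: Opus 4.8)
The plan is to prove the equality $S=\xi$ of Definition \ref{defsol} and formula \eqref{forcra}, where $\xi=(\det(M_1)/\Delta,\dots,\det(M_n)/\Delta)^T$, by establishing the two set inclusions $\xi\subseteq S$ (every representative of $\xi$ is admissible, as required by Definition \ref{defcra}) and $S\subseteq\xi$ (every real admissible solution has its $i$-th coordinate in $\xi_i$). Since \cite{Jus} already handles the non-homogeneous case, I would concentrate on the homogeneous one, where $\B=[B]$ has representative $0$; then each $M_i$ has a representative matrix with a zero column, so $\det(M_i)$ is a \emph{neutrix} and $\xi$ is a neutricial vector. Throughout I would exploit that, by Proposition \ref{divdeltak}, $\Delta$ is not an absorber of $B$, so $\Delta B=B/\Delta=B$; this is exactly what lets me clear and later reinstate the denominator $\Delta$ without disturbing the neutrix part of the right-hand member.

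For the inclusion $\xi\subseteq S$ I would start from the classical Cramer identity $\sum_{i=1}^n a_{li}\det(M_i^{\mathrm{rep}})=b_l\det(P)$, which is a polynomial identity in the chosen representatives and hence holds for \emph{all} representatives simultaneously. Lifting it to external numbers by subdistributivity (Corollary \ref{tcopti1}) gives $\sum_{i=1}^n \alpha_{li}\det(M_i)\subseteq \beta_l\Delta$, provided the neutrix terms generated by the lifting are shown to remain inside $\beta_l\Delta$; here I would bound every cofactor and minor by a limited number via Proposition \ref{cdt} and use stability in the form $\oA\subseteq\oslash\Delta$ to absorb the error. Dividing by $\Delta$ and using $\Delta B=B$ then yields $\sum_i\alpha_{li}\xi_i\subseteq\beta_l$ for each row $l$, that is $\A\xi\subseteq\B$, so every representative of $\xi$ lies in $S$.

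For the reverse inclusion $S\subseteq\xi$ I would take a real admissible $x$, so $\sum_i\alpha_{li}x_i\subseteq\beta_l$ for every $l$; the central parts then say $Px=b+\delta$ with $\delta\in B^{n}$, while the neutrix parts say $\sum_i A_{li}x_i\subseteq B$. Solving the representative linear system by Cramer's rule and using multilinearity of the determinant in the replaced column, $x_i$ splits as the contribution of $b$ (the representative of $\xi_i$, which is $0$ in the homogeneous case) plus a term linear in the slack $\delta\in B$; this second term is again controlled by Proposition \ref{cdt} and stability so as to land in the neutrix part of $\xi_i$, giving $x_i\in\det(M_i)/\Delta=\xi_i$. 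Combining the two inclusions, and invoking \cite{Jus} for the non-homogeneous case, delivers $S=\xi$.

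The main obstacle I anticipate is the homogeneous case combined with the pervasive failure of distributivity. Because $\det(M_i)$ is now neutricial, the parts of the argument in \cite{Jus} that rely on $\xi_i$ being zeroless (computing $R(\xi_i)$, freely dividing by $\xi_i$) are unavailable and must be redone in purely neutricial terms. More delicate still, both the formation of $\det(M_i)$ and the products $\alpha_{li}\det(M_i)$ involve adding and subtracting almost opposite external numbers, so every equality in the classical derivation survives only as an inclusion; the crux of the proof is to show that the accumulated neutrix terms never exceed $B$, which I expect to reduce precisely to the two stability hypotheses $\oA\subseteq\oslash\Delta$ and $\Delta$ not an absorber of $B$.
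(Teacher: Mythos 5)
You follow the same skeleton as the paper: the non-homogeneous case is delegated to \cite{Jus}, the homogeneous case is treated separately with $\xi$ neutricial, and $S\subseteq\xi$ comes from classical Cramer's rule applied to a representative system. However, your reverse inclusion contains a genuine gap. After the multilinearity split $x_i=\det\big(M_i^P(b)\big)/d+\det\big(M_i^P(\delta)\big)/d$, you assert that the slack term $\det\big(M_i^P(\delta)\big)/d$ ``lands in the neutrix part of $\xi_i$''. Proposition \ref{cdt}, stability and the absorber condition yield only the \emph{upper} bound $\det\big(M_i^P(\delta)\big)/d\in\pounds B/d= B$; to conclude $x_i\in\xi_i$ you additionally need the \emph{lower} bound $B\subseteq N\big(\det(M_i)/\Delta\big)$, i.e.\ that the neutrix of $\xi_i$ is at least $B$, and nothing in your outline delivers a lower bound on a neutrix. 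This is precisely the delicate half of the paper's Proposition \ref{nhtt} (formula \eqref{nam2}): it relies on Proposition \ref{lei2} to produce a cofactor $\Delta_{ij}$ with $|\Delta_{ij}|>\oslash\Delta$, hence not an absorber of $B$, whence $B\subseteq B\Delta_{1j}+\dots+B\Delta_{nj}\subseteq N\big(\det(M_j)\big)$ and then $B\subseteq N\big(\det(M_j)/\Delta\big)$ by Proposition \ref{divdeltak}. Without this ingredient your argument fails whenever $N(\xi_i)$ could be strictly smaller than $B$.

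The gap is avoidable, and the paper's proof shows how: drop the multilinearity split altogether. Since $\delta\in B^n$, the vector $b+\delta$ is itself a representative of $\B$, so $\det\big(M_i^P(b+\delta)\big)\in\det(M_i)$ and $d\in\Delta$ give $x_i=\det\big(M_i^P(b+\delta)\big)/d\in\det(M_i)/\Delta=\xi_i$ in one step. A second, smaller caution concerns your forward inclusion: ``dividing by $\Delta$'' after summing is not a valid general manipulation, since subdistributivity only gives $\big(\sum_i\gamma_i\big)\tfrac{1}{\Delta}\subseteq\sum_i\gamma_i\tfrac{1}{\Delta}$, the opposite of the inclusion you need. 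It is legitimate here only because in the homogeneous case every term $\alpha_{li}\det(M_i)$ is neutricial, and by Proposition \ref{tcopt}.\ref{tcoptiv} multiplication of a neutrix by the zeroless $1/\Delta$ reduces to multiplication by the real number $1/d$, which does distribute over sums of neutrices; you should make this explicit, or argue as the paper does, bounding $\xi_i\subseteq B$ directly and verifying $\A\xi\subseteq\B$ termwise.
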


Let $ \xi $ be given by \eqref{forcra}. Proposition \ref{nhtt} shows that the neutrix parts of the components of $ \xi $ are equal to the neutrix at the right-hand side $ B $. Then the proof of Theorem \ref{thecra} for homogeneous systems consists in showing that the solution is neutricial, with components equal to $ B $. 

We first introduce some notations, which in part will be used in the proof that for stable systems the Gauss-Jordan solution and the Cramer solution coincide, and provide bounds for the determinants of $ M_j $ and its neutrices.



\begin{definition}\label{defcra2}
	Consider the system $ \A|\B $ with $ \A\in\M_{n} (\E)$ non-singular. Let $ \Delta=\det(\A)\equiv d+D $ with $d=\det(P)$, where $P$ is a representative matrix of $\A$. For $1\leq i\leq n $, let $M_i(b) $ be the matrix obtained from $\A$ by the substitution of the $i^{th}$ column by a representative vector $ b $ of $ \B $. We write
	\begin{equation*}\label{forcrabd}
		\xi(b,d)^{T}=\left(\dfrac{\det (M_1(b))}{d},\ldots,%
		\dfrac{\det (M_n(b))}{d}\right)^{T}
	\end{equation*}
	\begin{equation*}\label{forcrab}
		\xi(b)^{T}=\left(\dfrac{\det (M_1(b))}{\Delta},\ldots,%
		\dfrac{\det (M_n(b))}{\Delta}\right)^{T}.
	\end{equation*}	
\end{definition}

\begin{lemma}
	\label{bd3.1} Assume the system $ \A|\B $ is stable.  Then for $ 1\leq j\leq n $
	\begin{enumerate}
		\item \label{bode3.1i} $\left|\det (M_j)\right|\leq 2n!\left|\overline{\beta}\right|$.
		\item \label{bode3.1ii} $N\big(\det (M_j)\big)\subseteq \overline{\beta}\cdot \oA+B.$
	\end{enumerate}
\end{lemma}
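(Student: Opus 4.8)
The plan is to expand $\det(M_j)$ directly through the permutation (Leibniz) formula that defines the determinant of an external matrix via sums of signed products, and to estimate each product separately, exploiting that the number $n!$ of summands is standard. Writing $M_j=[\mu_{k\ell}]$, where $\mu_{kj}=\beta_k$ and $\mu_{k\ell}=\alpha_{k\ell}$ for $\ell\neq j$, one has $\det(M_j)=\sum_{\sigma\in S_n}\sgn(\sigma)\prod_{k=1}^n\mu_{k\sigma(k)}$. In each product exactly one factor is taken from the $j$-th column, namely $\mu_{ij}=\beta_i$ with $i=\sigma^{-1}(j)$, while the remaining $n-1$ factors are coefficient entries. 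Thus every product has the form $\pm\,\beta_i\,\pi_\sigma$, where $\pi_\sigma=\prod_{k\neq i}\alpha_{k\sigma(k)}$.

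For Part \ref{bode3.1i} I would bound the factors in absolute value. Since $\A$ is reduced (Convention \ref{conv}), every coefficient entry satisfies $|\alpha_{k\ell}|\leq 1+\oA$, so by multiplicativity of the absolute value $|\pi_\sigma|\leq(1+\oA)^{n-1}$; because $n$ is standard and $\oA\subseteq\oslash$, Proposition \ref{tcopt}.\ref{tcoptv} gives $(1+\oA)^{n-1}=1+\oA\leq 2$. Together with $|\beta_i|\leq|\overline{\beta}|$ this yields $|\beta_i\pi_\sigma|\leq 2|\overline{\beta}|$ for every $\sigma$. Applying the triangle inequality over the $n!$ (standardly many) summands gives $|\det(M_j)|\leq n!\cdot 2|\overline{\beta}|=2n!\,|\overline{\beta}|$.

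For Part \ref{bode3.1ii} I would track neutrix parts. Since $N(\beta_i)=B$, the product rule of Proposition \ref{tcopt}.\ref{tcoptp} gives $N(\beta_i\pi_\sigma)=\beta_i N(\pi_\sigma)+B\,\pi_\sigma$. Iterating the same product rule over the $n-1$ reduced factors of $\pi_\sigma$ shows $N(\pi_\sigma)\subseteq\oA$: each resulting term is a limited product of coefficients times a single neutrix contained in $\oA$, and $\pounds\,\oA=\oA$. Hence $\beta_i N(\pi_\sigma)\subseteq\overline{\beta}\cdot\oA$. Moreover $\pi_\sigma$ is limited, so $B\,\pi_\sigma\subseteq\pounds B=B$. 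Thus $N(\beta_i\pi_\sigma)\subseteq\overline{\beta}\cdot\oA+B$ for each $\sigma$, and since the neutrix of a finite (standard) sum equals the largest neutrix among its summands, $N(\det(M_j))\subseteq\overline{\beta}\cdot\oA+B$.

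The routine parts are the finitely many applications of the product and triangle rules; the one point requiring care, and the main obstacle, is the neutrix bookkeeping for $\pi_\sigma$. One must use both that the matrix is reduced (so that the surviving coefficient products stay bounded by a limited number, here $\leq 1+\oA$) and that it is limited (so that $\oA\subseteq\oslash$), in order to guarantee that no product of coefficients inflates the neutrix beyond $\oA$ and that multiplication by $B$ does not enlarge $B$. Standardness of $n$ is precisely what keeps every sum and iterated product finite, so that these elementary estimates compose without any blow-up.
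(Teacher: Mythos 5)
Your proof is correct and follows essentially the same route as the paper's: both expand $\det(M_j)$ by the Leibniz formula, isolate the single $\beta$-factor in each signed product, bound the remaining coefficient product by $(1+\oslash)^{n-1}=1+\oslash$ using that $\A$ is reduced, and control the neutrix via the product rule together with $\pounds B=B$ and the fact that the neutrix of a standard-finite sum is the maximum of the summands' neutrices. The only cosmetic differences are your row-indexed permutation convention and your iteration of Proposition \ref{tcopt}.\ref{tcoptp} where the paper invokes Proposition \ref{tcopt}.\ref{tcoptv} directly to get $N(\gamma_\sigma)\subseteq\oA$.
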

\begin{proof}	
	Let $S_n$ be the set of all  permutations of $\{1,\dots, n\}$ and $\sigma\in S_n$. Put  $$\gamma_\sigma=\alpha_{\sigma(1)1}\dots\alpha_{\sigma(j-1)j-1}\alpha_{\sigma(j+1)j+1}\dots\alpha_{\sigma(n)n}.$$ 
	Because the system is reduced,  \begin{equation}
		\label{danh gia phan tu dinh thuc}
		|\gamma_\sigma|\leq \overline{\alpha}^{n-1}\leq (1+\oslash)^{n-1}=1+\oslash,
	\end{equation}	
	and, as a consequence of Proposition \ref{tcopt}.\ref{tcoptv}, 
	\begin{equation}\label{Ngamma}	
		N(\gamma_\sigma)=N\left( \prod_{1\leq k\leq n, k\neq j}(a_{\sigma(k)k}+A_{\sigma(k)k})\right) \subseteq N(1+\overline{A})^{n-1}=\overline{A}.	
	\end{equation}	
	\ref{bode3.1i}. It follows from \eqref{danh gia phan tu dinh thuc} that
	$$\left|\det (M_j)\right|\leq \sum\limits_{\sigma \in S_n}\left|\gamma_\sigma\beta_{\sigma(j)}\right|\leq \sum\limits_{\sigma \in S_n}\left|(1+\oslash) |\overline{\beta}|\right|= n!(1+\oslash) |\overline{\beta}|=2n! |\overline{\beta}|.$$ 
	
	\ref{bode3.1ii}. It follows from \eqref{Ngamma} and \eqref{danh gia phan tu dinh thuc} that	
	\begin{alignat*}{2}
		N\Big(\det \big(M_j\big)\Big)&=N\left(\sum\limits_{\sigma\in S_n}\mbox{sgn}\left(\sigma\right)\gamma_\sigma \beta_{\sigma(j)}\right)=\sum\limits_{\sigma\in S_n}N\left(\gamma_\sigma \beta_{\sigma(j)}\right)\\
		&=\sum\limits_{\sigma\in S_n}\left( \beta_{\sigma(j)}N(\gamma_\sigma)+\gamma_\sigma N(\beta_{\sigma(j)})\right) \\
		&\subseteq \sum\limits_{\sigma\in S_n}\left( \overline{\beta} \oA + (1+\oslash) B\right)  =\overline{\beta} \overline{A} + B.
	\end{alignat*}
\end{proof}

\begin{proposition}\label{nhtt} Assume the system $ \A|\B $ is stable.  Then for $1\leq j\leq n$
	\begin{equation*}  
		N\left(\dfrac{\det (M_j)}{\Delta}\right)=B.
	\end{equation*}	
	As a consequence, if the system is homogeneous, for $1\leq j\leq n$
	\begin{equation*} 
		\dfrac{\det (M_j)}{\Delta}=B.
	\end{equation*} 
\end{proposition}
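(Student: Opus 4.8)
The plan is to establish the identity $N(\det(M_j)/\Delta)=B$ by proving the two inclusions $N(\det(M_j)/\Delta)\subseteq B$ and $N(\det(M_j)/\Delta)\supseteq B$ separately, and then to read off the homogeneous case as an easy corollary. Writing $\det(M_j)=c+C$ with $c=\det(M_j(b))$ a representative and $C=N(\det(M_j))$, and recalling that $\Delta=d+D$ is zeroless, I would first compute the neutrix of the quotient directly from Definition \ref{defnam}: since $N(1/\Delta)=D/\Delta^2$ (Proposition \ref{tcopt}.\ref{tcoptid}), the product $\det(M_j)\cdot\tfrac1\Delta$ has neutrix part $\max\{|c|D/\Delta^2,\ C/\Delta,\ CD/\Delta^2\}$, and because $D/\Delta\subseteq\oslash$ the last term is absorbed by $C/\Delta$. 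Thus everything reduces to estimating the two neutrices $|c|D/\Delta^2$ and $C/\Delta$. Throughout I would use that the system is uniform, so $\underline B=B$ and $N(\beta_i)=B$ for every $i$.

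For the inclusion $\subseteq B$ (which I expect to be routine) the two bounds of Lemma \ref{bd3.1} do the work: $|c|\leq|\det(M_j)|\leq 2n!\,|\overline{\beta}|$ and $C\subseteq\overline{\beta}\,\oA+B$. Using $D\subseteq\oA$ (Proposition \ref{cdt}) one gets $|c|D/\Delta^2\subseteq 2n!\,(|\overline{\beta}|/\Delta)R(\A)$ and $C/\Delta\subseteq |\overline{\beta}|R(\A)+B/\Delta$, after which stability ($R(\A)\subseteq R(\B)$) and Proposition \ref{divdeltak} ($B/\Delta=B$) collapse both to $B$. When $\overline{\beta}$ is zeroless this is immediate from $R(\B)=B/\overline{\beta}$; when the system is upper homogeneous ($\overline{\beta}=B$) it is even simpler, since then Lemma \ref{bd3.1} already forces $\det(M_j)\subseteq B$ and hence $\det(M_j)/\Delta\subseteq B/\Delta=B$.

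The reverse inclusion $B\subseteq N(\det(M_j)/\Delta)$ is the main obstacle, because it must produce a genuinely non-vanishing contribution to the neutrix rather than merely bound it. Since $N(\det(M_j)/\Delta)\supseteq C/\Delta$, it suffices to show $C\supseteq\lambda B$ for some $\lambda$ that is non-infinitesimal relative to $\Delta$. Here I would expand the determinant along the substituted $j$-th column: $C=N(\det(M_j))=\sum_{\sigma\in S_n}N(\gamma_\sigma\beta_{\sigma(j)})$ with $\gamma_\sigma=\prod_{k\neq j}\alpha_{\sigma(k)k}$, so by uniformity $C\supseteq|\gamma_\sigma|\,N(\beta_{\sigma(j)})=|\gamma_\sigma|\,B$ for every $\sigma$. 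It remains to find one $\sigma$ with $|\gamma_\sigma|>\oslash\Delta$. This is exactly where Proposition \ref{lei2} enters: for the given $j$ it yields a row $i$ with $|\Delta_{i,j}|>\oslash\Delta$ for the minor $\Delta_{i,j}$ obtained by deleting row $i$ and column $j$. The delicate point is that $\det$ of an external matrix is the Minkowski signed sum, not the set of determinants of representatives, so cancellation cannot be invoked at the external level; instead I would pass to real representatives, where $|\delta_{i,j}|\leq(n-1)!\max_{\sigma(j)=i}|\gamma_\sigma|$ forces some $|\gamma_\sigma|\geq|\delta_{i,j}|/(n-1)!>\oslash\Delta$. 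For such $\sigma$ the number $|\gamma_\sigma|/\Delta$ exceeds $\oslash$, hence is not an absorber of $B$, giving $|\gamma_\sigma|B/\Delta\supseteq B$ and therefore $N(\det(M_j)/\Delta)\supseteq B$.

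Finally, for the homogeneous case I would simply note that $b=0$ is a legitimate representative of the neutricial vector $\B$, so $\det(M_j(0))=0$ and the numerator $\det(M_j)$ is neutricial; consequently $\det(M_j)/\Delta$ is neutricial and equals its own neutrix part, which the main identity has shown to be $B$.
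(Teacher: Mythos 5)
Your proof is correct, and at the top level it follows the same plan as the paper: the equality is split into two inclusions; the upper bound $N\!\left(\det(M_j)/\Delta\right)\subseteq B$ comes from the neutrix-of-a-product formula together with Lemma \ref{bd3.1}, the stability condition $R(\A)\subseteq R(\B)$ and Proposition \ref{divdeltak}; the lower bound comes from Proposition \ref{lei2} combined with the non-absorber hypothesis; and the homogeneous statement is read off at the end (your observation that $b=0$ is a representative, so that $\det(M_j)$ is neutricial, is exactly the right shortcut). The only genuine divergence is in how the lower bound is implemented. The paper never descends to single permutation products: it shows $B\subseteq B\Delta_{ij}\subseteq B\Delta_{1j}+\cdots+B\Delta_{nj}$, notes that this sum is contained in the determinant of the matrix obtained from $\A$ by replacing column $j$ by $[B,\dots,B]^{T}$ (by subdistributivity and the fact that products containing a neutrix all have the same sign, so no signed cancellation can occur), and that this determinant is in turn contained in $N\big(\det(M_j)\big)$; dividing by $\Delta$ finishes. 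In other words, the cancellation problem that you work around by passing to real representatives and applying a pigeonhole inside the minor (some $\sigma$ with $\sigma(j)=i$ and $|g_\sigma|\geq|\delta_{i,j}|/(n-1)!>\oslash\Delta$) simply does not arise at the external level, because everything multiplied into $B$ is itself a neutrix. Your route costs an extra combinatorial step but is self-contained in that it does not invoke the Laplace expansion with inclusions; the paper's route is shorter and keeps the whole minor $\Delta_{ij}$, rather than a single product $\gamma_\sigma$, as the multiplier of $B$. Both arguments hinge on the same two facts: Proposition \ref{lei2}, and that a quantity exceeding $\oslash\Delta$ cannot absorb $B$ when $\Delta$ does not.
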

\begin{proof} Let $D=N(\Delta)$. By Proposition \ref{tcopt}.\ref{tcoptix},  Lemma \ref{bd3.1}  and Proposition \ref{cdt}, we have for $1\leq j\leq n$  
	\begin{alignat}{2}\label{cong thuc moi chuong 4 }N\left(\dfrac{\det\big( M_j\big)}{\Delta}\right)=&\dfrac{1}{\Delta}N\left(\det (M_j)\right)+\det\big( M_j\big) N\left(\dfrac{1}{\Delta}\right) \\ 
		=&\dfrac{1}{\Delta}N\left(\det \big(M_j\big)\right)+\det \big(M_j\big) \dfrac{D}{\Delta^2}\notag\\
		\subseteq & \dfrac{1}{\Delta}(\overline{\beta} \oA+B)+2n!\overline{\beta} \dfrac{D}{\Delta^2}\notag\\
		\subseteq & \dfrac{\overline{\beta} \oA}{\Delta}+\dfrac{B}{\Delta}+\overline{\beta} \dfrac{\overline{A}}{\Delta^2}.\notag 
	\end{alignat}
	From the stability condition $R(\A)\subseteq R(\B)$ we derive both in the homogeneous and  non-homogeneous case that $\overline{\beta} \dfrac{\overline{A}}{\Delta}\subseteq B. $ Then we  obtain from \eqref{cong thuc moi chuong 4 } and Proposition \ref{divdeltak}  that
	\begin{equation} \label{nam1} 
		N\left(\frac{\det\big( M_j\big)}{\Delta}\right)\subseteq \overline{\beta}\frac{  \oA}{\Delta}  +\frac{B}{\Delta} +\frac{1}{\Delta}\left(\frac{\overline{\beta}}{\Delta}\oA\right) \subseteq B+B+B/\Delta=B.
	\end{equation}
	It follows from Proposition \ref{lei2} that  $\left|\Delta_{ij}\right|>\oslash\Delta$ for some $i\in \{1,\dots, n\}$.  Because $\Delta$ is not an absorber of $B$, also $ \Delta_{ij} $ is not an absorber of $B$. Hence $B\subseteq B \Delta_{ij}$. Using the fact that products containing a neutrix have always the same sign and subdistibutivity, we derive that
	\begin{alignat*}{2}\label{dttv2}
		B& \subseteq B\Delta_{1j}+\cdots+ B \Delta_{nj}\\
		&\subseteq\det\begin{bmatrix} 1+A_{11} & \cdots & \alpha_{1(j-1)} & B & \alpha_{1(j+1)}& \cdots & \alpha_{1n}\\
			\vdots & \ddots & \vdots & \vdots & \vdots & \ddots & \vdots\\
			\alpha_{n1} & \cdots & \alpha_{n(j-1)} & B & \alpha_{n(j+1)}& \cdots & \alpha_{nn}
		\end{bmatrix}\\
		&\subseteq   N\left(\det\begin{bmatrix} 1+A_{11} & \cdots & \alpha_{1(j-1)} & b_1+B & \alpha_{1(j+1)}& \cdots & \alpha_{1n}\\
			\vdots & \ddots & \vdots & \vdots & \vdots & \ddots & \vdots\\
			\alpha_{n1} & \cdots & \alpha_{n(j-1)} & b_n+B & \alpha_{n(j+1)}& \cdots & \alpha_{nn}
		\end{bmatrix}\right) \\
		&= N\left(\det (M_j)\right). 
	\end{alignat*}
	Then by Proposition \ref{divdeltak} 
	\begin{equation}\label{nam2}
		B=\dfrac{B}{\Delta}=\dfrac{N\big(\det (M_j)\big)}{\Delta}\subseteq \dfrac{N\big(\det (M_j)\big)}{\Delta} +\det(M_j) N\left(\dfrac{1}{\Delta}\right)=N\left(\dfrac{\det M_j}{\Delta}\right).
	\end{equation} 
	Combining \eqref{nam1}  and \eqref{nam2}, we conclude that $B=N\left(\dfrac{\det (M_j)}{\Delta}\right)$ for $ 1\leq j\leq n$.
	
	As a consequence, if the system is homogeneous, it holds that $\dfrac{\det (M_j)}{\Delta}=N\left(\dfrac{\det (M_j)}{\Delta}\right)=B$ for $ 1\leq j\leq n$.
\end{proof}

\begin{proof}[Proof of Theorem \ref{thecra}]
	Let $x=(x_1, x_2, \dots, x_n)^T\in \xi$. In order to show that $ x $ satisfies the system $ \A|\B $, assume first that  the system is not homogeneous. By Theorem 4.4 of \cite{Jus} the external vector $\xi$ given by \eqref{forcra} is the  solution of the system $ \A|\B $ given in the form \eqref{hpttq}, hence $ x $ satisfies $ \A|\B $ by Proposition \ref{extreal}. 
	Secondly, assume that the system $ \A|\B $ is homogeneous. Then $\xi=(B, B,\dots, B)^T$ by Proposition \ref{nhtt}. By direct verification we see that $\xi$ satisfies \eqref{hpttq}. Again $x$ is a solution of the system $ \A|\B $ by Proposition \ref{extreal}. 
	
	Suppose now that $x$ is an admissible solution of system $ \A|\B $. Let $ P=[a_{ij}]_{n\times n} $ be a representative matrix for $ \A $. Then for $1\leq i\leq n$ there exists $b_i\in \beta_i$ such that 
	$$\left\{\begin{array}{cccccl}
		a_{11}x_1+&\cdots&+a_{1n}x_n&=&b_1\\
		\vdots &\ddots & \vdots& \vdots&\\
		a_{n1}x_1+&\cdots& +a_{nn}x_n&=&b_n
	\end{array}.\right. $$
	Let $ b=(b_{1},\dots, b_{n})^T $. 
	By Cramer's rule, one has $x_i=\dfrac{M_i^{P}(b)}{d}\in \dfrac{\det(M_i)}{\Delta}$ for $1\leq i\leq n$. Hence $ x\in \xi $.
\end{proof} 

\section{Proof of the Main Theorem}\label{proofs}

The proof of Theorem  \ref{maintheorem} is organized as follows: we prove first that the Gauss-Jordan procedure does not alter the solution of a stable system, i.e. the set of admissible solutions.  With this and Theorem \ref{thecra}, we show that the solutions given by Cramer's rule and by Gauss-Jordan elimination are equal. Then we show that a stable system such that the coefficient matrix is a near-identity matrix is simply solved by the right-hand member. The Main Theorem will follow from these theorems. 


\begin{theorem}\label{idltdng}
	Suppose that the system $ \A|\B $ is stable. Then the Gauss-Jordan solution $ G $ is well-defined, and a real vector $ x $ is an admissible solution if and only if $ x\in G $.
\end{theorem}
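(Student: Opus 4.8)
The plan is to prove the single equation $G^P = S$ for every properly arranged representative matrix $P$, where $S$ is the set of admissible real vectors. Because $S$ carries no reference to $P$, this at once gives well-definedness of $G$ (it equals $S$ for all admissible choices of $P$) and the claimed equivalence that $x$ is admissible if and only if $x\in G$. Throughout I would exploit that $\G^P$, its inverse $(\G^P)^{-1}$ and the real vector $x$ are \emph{real} matrices, which is what makes the (sub)associativity results of Propositions \ref{subassociativity} and \ref{combinetheorem} applicable despite the general failure of associativity for external matrices.

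First I would establish $S\subseteq G^P$. Starting from $\A x\subseteq\B$, monotonicity of matrix multiplication (Proposition \ref{matincl}) applied to the real matrix $\G^P$ gives $\G^P(\A x)\subseteq\G^P\B$; since $\G^P$ and $x$ are both real, the equality case of associativity (Proposition \ref{combinetheorem}.\ref{associate of product of matrix}) rewrites the left-hand side as $(\G^P\A)x$, so $x\in G^P$.

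Next I would treat the harder inclusion $G^P\subseteq S$. From $(\G^P\A)x\subseteq\G^P\B$ I would apply the real matrix $(\G^P)^{-1}$ and Proposition \ref{matincl}, reducing the right-hand side to $\B$ by Proposition \ref{Imme1}. For the left-hand side I would first use associativity (again Proposition \ref{combinetheorem}.\ref{associate of product of matrix}, with the real factors $(\G^P)^{-1}$ and $x$) to obtain $\big((\G^P)^{-1}(\G^P\A)\big)x$, and then compare $(\G^P)^{-1}(\G^P\A)$ with $\A$: subassociativity for a real left factor (Proposition \ref{subassociativity}.\ref{inclusion1}) gives $\big((\G^P)^{-1}\G^P\big)\A\subseteq(\G^P)^{-1}(\G^P\A)$, and the cancellation $(\G^P)^{-1}\G^P=I_n$ — which is exact because it takes place among real matrices — yields $\A\subseteq(\G^P)^{-1}(\G^P\A)$. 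Right-multiplying by $x$ and chaining the inclusions produces $\A x\subseteq\B$, i.e. $x\in S$.

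The main obstacle I anticipate is precisely this bookkeeping of associativity. Since external matrices obey only subassociativity in general, I must apply the equality form of associativity only where both a real left factor and a real right factor are present, the inclusion form elsewhere, and verify that the decisive cancellation $(\G^P)^{-1}\G^P=I_n$ is carried out among real matrices, where it holds exactly. Once the two directions are matched with the correct sense of inclusion, the chains close and $G^P=S$ follows for every admissible $P$.
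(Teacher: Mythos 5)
Your proposal is correct and takes essentially the same route as the paper: both directions rest on Proposition \ref{matincl}, the associativity statement of Proposition \ref{combinetheorem} for real outer factors, the subassociativity of Proposition \ref{subassociativity} with the real left factor $(\G^P)^{-1}$, the exact real-matrix cancellation $(\G^P)^{-1}\G^P=I_n$, and Proposition \ref{Imme1}, the paper merely applying subassociativity directly to the triple product $\big((\G^P)^{-1}\G^P\big)(\A x)$ rather than to $\big((\G^P)^{-1}\G^P\big)\A$ followed by right multiplication by $x$. The only point to flag is that your last step uses monotonicity of \emph{right} multiplication by the real vector $x$ (if $\M\subseteq \mathcal{N}$ then $\M x\subseteq \mathcal{N}x$), which is not literally Proposition \ref{matincl} (stated for left factors) but is an elementary entrywise consequence of the same scalar monotonicity, so the argument stands.
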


\begin{proof} Let $ S $ be the solution of $ \A|\B $, and $ P $ be a properly arranged matrix of representatives of $\A$.  
	
	Assume first that $ x  $ is an admissible solution, i.e. $ x\in S $. Then $ \mathcal{A}x\subseteq\B $. By Proposition \ref{combinetheorem} and Proposition \ref{matincl}
	\begin{equation*}
		\Big(\mathcal{G}^P(\mathcal{A})\Big)x = \mathcal{G}^P(\A x)\subseteq \mathcal{G}^P(\mathcal{B}).
	\end{equation*}
	Hence $ x\in G^{P} $.
	
	Conversely, assume that $ x\in G^{P} $. Then $ \Big(
	\mathcal{G}^P(\mathcal{A})\Big)x \subseteq \mathcal{G}^P(\mathcal{B})$. Using Proposition \ref{subassociativity}, Proposition \ref{combinetheorem} and Proposition \ref{Imme1} we derive that
	\begin{align*}
		\mathcal{A}x=I(\mathcal{A}x)&=(\left(\mathcal{G}^P\right)^{-1}\mathcal{G}^P)(\mathcal{A}x)\\
		&\subseteq \left(\mathcal{G}^P\right)^{-1}\Big(\mathcal{G}^P(\mathcal{A}x)\Big)=\left(\mathcal{G}^P\right)^{-1}\Big(\Big(\mathcal{G}^P(\mathcal{A})\Big)x\Big)\\
		&\subseteq\left(\mathcal{G}^P\right)^{-1}\big(\mathcal{G}^P(\B)\big)=\B.
	\end{align*}
	Hence $ x\in S $. Combining, we see that $ S=G^{P} $. Consequently  $ G^{P} $ does not depend on the choice of $ P $, hence $ G\equiv G^{P} $ is well-defined. We conclude that  $ S=G $.
\end{proof}



\begin{theorem}
	\label{dlng3} Assume that the system $ \A|\B $ is stable. Then the Gauss-Jordan solution is equal to the Cramer-solution.
\end{theorem}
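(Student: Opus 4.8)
The plan is to derive the statement directly from the two preceding theorems, Theorem \ref{idltdng} and Theorem \ref{thecra}, which between them already contain all the substance; the present theorem is essentially their conjunction. So I would not carry out any fresh computation with minors or neutrices, but rather chain two established identities for the solution set.

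First I would invoke Theorem \ref{idltdng}. Since the system $\A|\B$ is stable, that theorem guarantees that the Gauss-Jordan solution $G$ is well-defined, i.e. independent of the chosen representative matrix $P$, and that it coincides with the solution set $S$ of all real admissible solutions; thus $G=S$. Next I would apply Theorem \ref{thecra}, which states that for a stable system the solution is given by the Cramer external vector $\xi$ of \eqref{forcra}. By Definition \ref{defcra} together with Proposition \ref{extreal}, this assertion unpacks precisely as: every representative vector of $\xi$ is an admissible solution and, conversely, every admissible real solution is a representative of $\xi$. Hence $S$ equals the set of representatives of $\xi$, and in particular the Cramer-solution $\xi$ is itself well-defined in the sense of Definition \ref{defcra}.

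Combining the two identities, a real vector $x$ lies in $G$ if and only if $x\in S$, if and only if $x$ is a representative of the Cramer external vector $\xi$. Therefore $G$ equals the set of representatives of $\xi$, which is exactly the statement that the Gauss-Jordan solution equals the Cramer-solution. The only point demanding any care — and the nearest thing to an obstacle in an otherwise immediate argument — is the bookkeeping identification between the external Cramer vector $\xi$ and its set of real representatives; this is furnished by Proposition \ref{extreal} and requires no extra estimates. Consequently the proof reduces to citing Theorem \ref{idltdng} and Theorem \ref{thecra} and reading off the equality $G=\xi$.
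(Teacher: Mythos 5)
Your proposal is correct and follows exactly the paper's own argument: the paper's proof is simply the one-line statement that the theorem follows from Theorem \ref{idltdng} and Theorem \ref{thecra}. Your additional care about identifying the external Cramer vector $\xi$ with its set of real representatives via Proposition \ref{extreal} is a sound (if implicit in the paper) piece of bookkeeping, not a deviation.
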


\begin{proof}
	The theorem follows from  Theorem \ref{idltdng}  and Theorem \ref{thecra}.
\end{proof}

The solution of the system whose coefficient matrix is the identity matrix is of course the right-hand member. We use Theorem \ref{thecra} to show that this property remains valid if the coefficient matrix is a near-identity matrix, provided the system is stable.
\begin{theorem}\label{Cramer-solution of near identity system}
	Let $ \A $ be a near-identity matrix and $\B=b+B$. Suppose that the system $\A|\B$ is stable. Then $\B$ is the solution of the system.
\end{theorem}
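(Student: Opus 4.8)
The plan is to read off the solution from Cramer's Rule and then to check that for a near-identity matrix the Cramer vector collapses to the right-hand member itself. First I would put Convention \ref{conv} in force, so that the system is square, non-singular, reduced and uniform, with right-hand neutrices all equal to $B$; since $\A$ is a near-identity matrix, its representative matrix is $P=I_n$. As the system is stable, Theorem \ref{thecra} applies and gives that the solution $S$ is the external vector
\[
\xi^{T}=\left(\dfrac{\det(M_1)}{\Delta},\ldots,\dfrac{\det(M_n)}{\Delta}\right)^{T},
\]
where $M_j$ is obtained from $\A$ by replacing its $j^{th}$ column by $\B$. It therefore suffices to show $\xi=\B$.

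To identify $\xi$ I would treat the representative and neutrix parts separately. For the representative matrix $P=I_n$ one has $d=\det(P)=1$, and for any representative vector $b=(b_1,\dots,b_n)^T$ of $\B$ the matrix $M_j^{P}(b)$ is the identity with its $j^{th}$ column replaced by $b$; expanding the determinant, only the identity permutation contributes, so $\det(M_j^{P}(b))=b_j$. Hence, following Definition \ref{defcra2} and the proof of Theorem \ref{thecra} (in which $x_j=\det(M_j^{P}(b))/d\in\det(M_j)/\Delta$), a representative of $\xi_j=\det(M_j)/\Delta$ is $\det(M_j^{P}(b))/d=b_j$.

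For the neutrix part I would simply quote Proposition \ref{nhtt}: since the system is stable, $N\big(\det(M_j)/\Delta\big)=B$ for every $j$. Combining the two computations yields $\xi_j=b_j+B=\beta_j$ for $1\leq j\leq n$, that is $\xi=\B$. By Theorem \ref{thecra} this external vector is the solution, so $\B$ is the solution of the system, as claimed.

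I expect no serious obstacle here: the argument is a short assembly of Theorem \ref{thecra} and Proposition \ref{nhtt}. The only point needing care is the determinant bookkeeping — confirming that $\Delta$ is zeroless with representative $1$ (which follows from $P=I_n$ together with $N(\Delta)\subseteq\oA\subseteq\oslash$ by Proposition \ref{cdt} and the near-identity hypothesis) and that the representative of each $\det(M_j)$ is exactly $b_j$. Once these representatives are pinned down, the identification $\xi=\B$ is immediate.
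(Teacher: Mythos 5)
Your proposal is correct and follows essentially the same route as the paper's own proof: invoke Theorem \ref{thecra} for the stable system, use $P=I_n$ to see that $b_j$ is a representative of $\det(M_j)$ and $1$ of $\Delta$, and apply Proposition \ref{nhtt} to get the neutrix part $B$, yielding $\xi_j=b_j+B=\beta_j$. Your explicit permutation-expansion check that $\det\bigl(M_j^{P}(b)\bigr)=b_j$ is simply a more detailed justification of a step the paper states without computation.
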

\begin{proof} Put $	\xi=(\xi_1,\dots, \xi_n)^T$ with $ \xi_i=\det (M_i)/\Delta $ for $1\leq i\leq n$. By Theorem \ref{thecra} the vector $ \xi $ is the solution of the system $\A|\B$. We have $\A=I_{n}+A$ with $A\subseteq [\oslash]_{n\times n}$, so $ I_{n} $ is a representative matrix of $ \A $, and $ b_{i} $ is a representative of $ \det (M_i)$ for $1\leq i\leq n$. It follows from the stability that $\Delta=1+D$ with $D\subseteq \oA\subseteq \oslash$. In addition, by Proposition \ref{nhtt} it holds that $N\left(\dfrac{\det (M_i)}{		\Delta}\right)=B$ for $1\leq i\leq n$. Then
	\begin{equation*}  \label{nghiem gauss2n}
		\xi_i= 		b_{i}+N\left( \dfrac{\det (M_i)}{\Delta}\right)=b_i+B=\beta_{i}.
	\end{equation*}Hence
	for $  1\leq i\leq n $, i.e. $ \xi=\B $.  Hence the   solution of the system $\A|\B$ is equal to $\B$.
\end{proof}
Theorem \ref{TheoremrepresentGausssolution} gives an effective way to find the solution. As in the real case, the solution of $\A|\B$ is given by the Gauss-Jordan procedure, where by Theorem \ref{idltdng} we may choose any representative matrix $ P $ of $ \A $, provided it is reduced and properly arranged. The result follows from the fact that the Gauss-Jordan procedure, which due to Part \ref{p2main} of Theorem \ref{limitofelement} 
does not affect the stability of the system, leads to a stable system whose matrix of coefficients is a near identity matrix, the  solution of which is equal to right-hand member by Theorem \ref{Cramer-solution of near identity system}. 

\begin{theorem}
	\label{TheoremrepresentGausssolution} Suppose that the system $ \A|\B $ is stable, and properly arranged with respect to a  representative matrix $P$ of $\A$. Then $\mathcal{G}^{P}(\mathcal{B})$ is the Gauss-Jordan solution of $ \A|\B $.
\end{theorem}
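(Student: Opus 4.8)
The plan is to recognize the Gauss-Jordan solution $G^P$ as the ordinary solution set of the \emph{transformed} system $\G^P(\A)\,|\,\G^P(\B)$, and then to read that solution off directly from the near-identity result already proved. By Definition~\ref{defgauex},
\[
G^{P}=\left\{x\in \R^{n}\mid \big(\G^{P}(\A)\big)x\subseteq \G^{P}(\B)\right\},
\]
which is precisely the solution, in the sense of Definition~\ref{defsol}, of the flexible system $\G^{P}(\A)\,|\,\G^{P}(\B)$. Hence proving the theorem amounts to showing that the solution of this transformed system equals its own right-hand member $\G^{P}(\B)$.

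To establish this, I would invoke Part~\ref{p2main} of Theorem~\ref{lmqhnt}: since $\A|\B$ is stable and properly arranged with respect to $P$, the transformed system $\G^{P}(\A)\,|\,\G^{P}(\B)$ is again stable, and its coefficient matrix $\G^{P}(\A)$ is a near-identity matrix. Before applying the near-identity result I would also check, via Theorem~\ref{qhnt}, that the neutrix parts of $\G^{P}(\B)$ are all equal to $B$ (that is, $[B]^{(2n)}=[B]$), so that $\G^{P}(\B)$ has the uniform form $b'+B$ demanded by the hypotheses of Theorem~\ref{Cramer-solution of near identity system}.

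Finally, applying Theorem~\ref{Cramer-solution of near identity system} to the stable system $\G^{P}(\A)\,|\,\G^{P}(\B)$, whose coefficient matrix is a near-identity matrix, I conclude that its solution equals its right-hand member $\G^{P}(\B)$. Combining this with the identification of $G^{P}$ above yields $G^{P}=\G^{P}(\B)$, which is exactly the asserted statement.

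I expect no substantial obstacle here, since all the genuine work has already been carried out in Theorems~\ref{lmqhnt}, \ref{qhnt} and \ref{Cramer-solution of near identity system}. The only point requiring care is verifying that the transformed system really meets the hypotheses of Theorem~\ref{Cramer-solution of near identity system}: one must confirm both its stability and the near-identity form of its coefficient matrix (supplied by Part~\ref{p2main} of Theorem~\ref{lmqhnt}) and the uniformity of the transformed right-hand member (supplied by Theorem~\ref{qhnt}). With these matched, the conclusion is immediate.
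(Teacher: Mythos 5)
Your proof follows essentially the same route as the paper's: both identify $G^{P}$ with the solution of the transformed system $\G^{P}(\A)\,|\,\G^{P}(\B)$, invoke Part~\ref{p2main} of Theorem~\ref{lmqhnt} for the stability and the near-identity form of the transformed coefficient matrix, and then apply Theorem~\ref{Cramer-solution of near identity system} to read off the solution as the right-hand member. Your extra check via Theorem~\ref{qhnt} that the transformed right-hand member stays uniform ($[B]^{(2n)}=[B]$, so that $\G^{P}(\B)$ has the form $b'+B$) is a legitimate point of care which the paper leaves implicit inside the proof of Theorem~\ref{lmqhnt}.

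The one step you omit is the passage from the Gauss-Jordan solution \emph{with respect to} $P$ to \emph{the} Gauss-Jordan solution. By Definition~\ref{defgauex}, the unqualified term ``the Gauss-Jordan solution'' $G$ is only defined when $G^{P}$ does not depend on the choice of the reduced, properly arranged representative matrix, and the theorem asserts exactly this unqualified statement. Your argument establishes $G^{P}=\G^{P}(\B)$ for the given $P$, but by itself says nothing about a different admissible representative matrix $Q$: a priori $G^{Q}$ and $\G^{Q}(\B)$ could differ from $G^{P}$. The paper closes this in its final line by citing Theorem~\ref{idltdng}, which shows $G^{P}$ equals the set $S$ of admissible solutions of $\A|\B$ for every such $P$, hence is independent of $P$, making $G$ well-defined and equal to $\G^{P}(\B)$. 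Adding that one citation completes your proof; everything else matches the paper's argument.
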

\begin{proof}
	
	By Theorem \ref{thenearid} it holds that  $\G^P(\mathcal{A})$ is a near-identity matrix.
	By Part \ref{p2main} of Theorem \ref{lmqhnt},  the system $\Big(\G^P(\A)\Big)x\subseteq \G^P(\B)$ is stable.
	Then $\G^P(\B)$  is the solution of the system $\Big(\G^P(\A)\Big)x\subseteq \G^P(\B)$ by Theorem \ref{Cramer-solution of near identity system}. Hence  $ \G^P(\B)=\G^P $, so it is the Gauss-Jordan solution of the system $\A|\B$ with respect to $ P $. By Theorem \ref{idltdng} it is Gauss-Jordan solution of the system $\A|\B$.
\end{proof}

\begin{proof}[Proof of Theorem \ref{maintheorem}]
	The solution $ S $ is equal to the Gauss-Jordan solution $ G $ by Theorem \ref{idltdng}, which also says that the application of the Gauss-Jordan procedure does not depend on the choice of  the matrix of representatives $ P $. Then $ G=\G^{P}(\B) $ by Theorem \ref{TheoremrepresentGausssolution}. Also $ G $ is equal to the Cramer-solution by Theorem \ref{dlng3}, which takes the form \eqref{forcra} by Theorem \ref{thecra}.
\end{proof} 
\section{Equivalent systems}\label{neglection of terms}

Systems with the same right-hand member will be said to be equivalent if they have equal solutions. By showing that two systems are equivalent, we may obtain simplifications. In particular, let $ \A|\B $ be a system such that some of the entries of $ \A $ are given in the form of expansions. Assume that $ \A' $ is obtained from $ \A $ by truncating the expansions in such a way that $ \A'|\B $ is equivalent to $ \A|\B $. Then we may solve as well the simplified system $ \A'|\B $,  neglecting the extra terms occurring in $ \A $. 

If $ \A|\B $ is stable, we will see that the simplification is justified if the neglected terms $ t $ of the expansions satisfy $ t/\Delta\subseteq R(\B)$. We may roughly interpret this by the possibility to neglect decimals in a coefficient matrix, if compared with the determinant they are small with respect to the relative imprecisions of the right-hand member. 

We will illustrate the effects of simplification with the help of Example \ref{ex6} and Example \ref{ex7} below, and some numerics. Again we consider systems $ \A|\B $ in the sense of Convention \ref{conv}.

\begin{definition}\label{defequiv}
	Let $\mathcal{A},\mathcal{A'}\in \M_{n}(\E)$ and $ \B\in \M_{n,1}(\E) $ be an external vector. The system $ \A'|\B $ is said to be equivalent to $ \A|\B $ if the solution of $ \A'|\B $ is equal to the solution of $ \A|\B $.
\end{definition}

 Proposition \ref{samesol} gives conditions for such flexible systems to be equivalent.
\begin{proposition}\label{samesol}
	Let $ \A|\B $ be a stable system with solution $ S=\G^{P}(\B) $, where $ P=[a_{ij}]_{n\times n} $ is a reduced properly arranged representative matrix. Let $ Q=[q_{ij}]_{n\times n}\in\M_{n}(\R) $ be a reduced properly arranged matrix such that for $ 1\leq i,j \leq n $
	\begin{equation}\label{difqa}
		q_{ij}-a_{ij}\in \oA.
	\end{equation}
	Let $ \A'\equiv [\alpha'_{ij}] $ with $ \alpha'_{ij}=q_{ij}+A'_{ij} $ and $ A'_{ij}\subseteq\oA $ for $ 1\leq i,j \leq n $. Then $ \A'|\B $ is a stable equivalent system, and $ \G^{P}(\B)=\G^{Q}(\B) $.
	
\end{proposition}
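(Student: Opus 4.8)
The plan is to dispatch the three assertions — stability of $\A'|\B$, the identity $\G^P(\B)=\G^Q(\B)$, and equivalence — in that order, since equivalence will follow at the end from the other two together with the Main Theorem. First I would record the structural facts. Since $Q$ is reduced and every $A'_{ij}\subseteq\oA\subseteq\oslash$, the matrix $\A'$ is reduced with representative matrix $Q$ and satisfies $\overline{A'}\subseteq\oA$. The real determinant $d'=\det(Q)$ differs from $d=\det(P)$ only through the entrywise changes $q_{ij}-a_{ij}\in\oA$; as these are infinitesimal (indeed $\oA\subseteq\oslash\Delta$ by stability) and all cofactors of the reduced matrix are limited by Proposition \ref{cdt}, expanding shows $d'-d\in\oA\subseteq\oslash\Delta$, so $d'$ is zeroless with $|d'|=|d|(1+\oslash)$. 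Combined with $D'=N(\Delta')\subseteq\overline{A'}\subseteq\oA$ (again Proposition \ref{cdt}), this gives that $\Delta'$ is zeroless and of the same order of magnitude as $\Delta$.

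With this in hand the stability of $\A'|\B$ is routine. Using $\overline{A'}\subseteq\oA$, that $|\Delta'|$ is an appreciable multiple of $|\Delta|$, and Proposition \ref{tcopt}.\ref{tcoptiv}, one obtains
$R(\A')=\overline{A'}/\Delta'\subseteq\oA/\Delta=R(\A)\subseteq R(\B)$, which yields both the stability of the matrix $\A'$ and condition \eqref{dn2n}. For condition \eqref{dn4n}, since $\Delta'/\Delta$ is appreciable and $\Delta$ is not an absorber of $B$ (Proposition \ref{divdeltak}), we get $\Delta' B=(\Delta'/\Delta)\Delta B=(\Delta'/\Delta)B=B$, so $\Delta'$ is not an absorber of $B$ either.

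The central step is $\G^P(\B)=\G^Q(\B)$. I would write $\B=b+[B]$ with $b$ a representative vector; since $\G^P$ and $\G^Q$ are real matrices, distributivity (Proposition \ref{distributivitymatrix}) together with $\G^P[B]=\G^Q[B]=[B]$ from Theorem \ref{qhnt} reduces the claim to $\G^P b-\G^Q b\in[B]$. Because the Gauss--Jordan procedure carries a real matrix to the identity (Theorem \ref{cmcttruyhoi}), we have $\G^P=P^{-1}$ and $\G^Q=Q^{-1}$, whence $\G^P b-\G^Q b=(P^{-1}-Q^{-1})b=P^{-1}(Q-P)Q^{-1}b$, where $Q-P$ has all entries in $\oA$. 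Estimating one component, each contributing product is bounded in absolute value by a limited multiple of $\dfrac{\oA\,|\overline{\beta}|}{\Delta^{2}}=\pounds\cdot\dfrac{1}{\Delta}\bigl(R(\A)\,|\overline{\beta}|\bigr)$, using that the entries of $P^{-1},Q^{-1}$ are cofactors over $d,d'$ and hence lie in $\pounds/\Delta$ by Proposition \ref{cdt}. By stability $R(\A)\subseteq R(\B)$ and \eqref{rbeta} give $R(\A)\,|\overline{\beta}|\subseteq B$, and then $B/\Delta=B$ from Proposition \ref{divdeltak} places each term in $B$. Summing the standardly many terms keeps us inside the subgroup $B$, so $(P^{-1}-Q^{-1})b\in[B]$ and therefore $\G^P(\B)=\G^Q(\B)$.

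Finally, for equivalence: $\A'|\B$ being stable and properly arranged with respect to $Q$, Theorem \ref{TheoremrepresentGausssolution} (equivalently Theorem \ref{maintheorem}) identifies its solution as $\G^Q(\B)$, which by the previous step equals $\G^P(\B)=S$, the solution of $\A|\B$; hence the two systems are equivalent. I expect the main obstacle to be precisely the order-of-magnitude estimate for $P^{-1}(Q-P)Q^{-1}b$: one must combine the control on cofactors from Proposition \ref{cdt}, the relative-uncertainty inequality $R(\A)\subseteq R(\B)$, the bound \eqref{rbeta}, and the invariance $B/\Delta=B$ in exactly the right grouping, since any looseness there would fail to land the estimate inside $B$ and would break the reduction of distributivity.
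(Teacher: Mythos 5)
Your proof is correct, but it takes a genuinely different route from the paper's. The paper never computes anything with matrix inverses: it first proves two lemmas (its Lemmas \ref{stns} and \ref{steqs}) showing that fattening all neutrices of the coefficient matrix up to $\oA$, \emph{while keeping the same representative matrix}, preserves non-singularity, stability, and the solution $\G^{P}(\B)$. It then introduces the auxiliary system $\A''|\B$ with $\A''=Q+(\oA)_{n\times n}$, and observes that by \eqref{difqa} one has $Q+(\oA)_{n\times n}=P+(\oA)_{n\times n}$, so that $\A''$ admits \emph{both} $P$ and $Q$ as reduced properly arranged representative matrices. Applying the lemmas once with representative $P$ (linking $\A|\B$ to $\A''|\B$) and once with representative $Q$ (linking $\A'|\B$ to $\A''|\B$), equivalence and the identity $\G^{P}(\B)=\G^{Q}(\B)$ drop out of Theorem \ref{TheoremrepresentGausssolution} and the representative-independence of the Gauss-Jordan solution (Theorem \ref{idltdng}), with no new estimates at all. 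You instead prove the key identity $\G^{P}(\B)=\G^{Q}(\B)$ by direct computation: $\G^{P}=P^{-1}$ and $\G^{Q}=Q^{-1}$ via Theorem \ref{cmcttruyhoi}, the resolvent identity $P^{-1}-Q^{-1}=P^{-1}(Q-P)Q^{-1}$, and the chain of order-of-magnitude bounds (limited cofactors over $\Delta$, $R(\A)\subseteq R(\B)$, \eqref{rbeta}, and $B/\Delta=B$) to place the perturbation inside $[B]$; you also verify stability of $\A'|\B$ directly, including the change of representative via $d'-d\in\oA$. Both arguments are sound. The paper's buys brevity and shows the proposition is essentially a corollary of the well-definedness of the Gauss-Jordan solution; yours buys a self-contained, quantitative perturbation bound $P^{-1}(Q-P)Q^{-1}b\in[B]$ that makes the error-propagation mechanism explicit and shows exactly where each stability hypothesis is consumed. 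One small remark: your appeal to Proposition \ref{distributivitymatrix} for the real matrices $\G^{P},\G^{Q}$ is at the same level of rigour as the paper's own use of it (e.g.\ in Proposition \ref{Imme1}); strictly speaking the distributivity needed is just the scalar rule $x(\alpha+\beta)=x\alpha+x\beta$ for real $x$, which is what actually justifies the reduction to $\G^{P}b-\G^{Q}b\in[B]$.
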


We prove first two lemmas. 

\begin{lemma}\label{stns}
	Let $ \A\in \M_{n}(\E)=[\alpha_{ij}]_{n\times n} =[a_{ij}+A_{ij}]_{n\times n}$ be a non-singular stable matrix, properly arranged with respect to a reduced representative matrix $P=[a_{ij}]_{n\times n}  $. Let $ \A'\equiv [\alpha'_{ij}]_{n\times n} $ be defined by
	\begin{equation*}
		\alpha'_{ij}=a_{ij}+A'_{ij}, 
	\end{equation*}
	with $ A'_{ij}\subseteq\oA $ for $ 1\leq i,j \leq n $.
	Then the matrix $ \A' $ is non-singular and stable.
\end{lemma}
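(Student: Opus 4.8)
The plan is to exploit that $\A'$ shares the representative matrix $P$ with $\A$, so that $\det(\A')$ has the same real part $d=\det(P)\neq 0$ as $\Delta=\det(\A)$; only the neutrix parts differ, and by hypothesis they have shrunk. I would first record the quantitative content of the stability of $\A$. Since $\A$ is stable it is in particular limited (Definition \ref{defrelun}), and since $P$ is reduced we have $|a_{ij}|\le 1$ for all $i,j$. From $R(\A)=\oA/\Delta\subseteq\oslash$, together with Proposition \ref{tcopt}.\ref{tcoptiv} which gives $\oA/\Delta=\oA/d$, I obtain the key inclusion $\oA\subseteq\oslash d$, and hence also $\oA\subseteq\oslash$.

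Next I would check that $\A'$ is limited, so that the earlier estimates apply to it as well. Each entry satisfies $|\alpha'_{ij}|=|a_{ij}|+A'_{ij}$ with $|a_{ij}|\le 1$ and $A'_{ij}\subseteq\oA\subseteq\oslash$, so $\overline{A'}=\max_{i,j}A'_{ij}\subseteq\oA$ and $\overline{\alpha'}\subset\pounds$; thus $\A'$ is limited and Proposition \ref{cdt} applies to it.

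For non-singularity I would write $\Delta'\equiv\det(\A')=d+D'$, where the real part is again $d=\det(P)$ because $\A'$ has representative matrix $P$. By Proposition \ref{cdt} the neutrix part obeys $D'=N(\Delta')\subseteq\overline{A'}\subseteq\oA\subseteq\oslash d$. Now $d$ is a nonzero real, hence a zeroless external number, so $d\notin\oslash d$ by Proposition \ref{tcopt}.\ref{giao}; since $D'$ is a symmetric neutrix contained in $\oslash d$, this forces $0\notin d+D'=\Delta'$, that is, $\Delta'$ is zeroless. Hence $\A'$ is non-singular. Finally, stability of $\A'$ follows immediately: using Proposition \ref{tcopt}.\ref{tcoptiv} once more, $R(\A')=\overline{A'}/\Delta'=\overline{A'}/d$, and from $\overline{A'}\subseteq\oA\subseteq\oslash d$ I conclude $R(\A')\subseteq(\oslash d)/d=\oslash$.

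The one point deserving care—and the only place where the stability of $\A$ is used beyond bookkeeping—is the non-singularity step: one must guarantee that the perturbed (possibly enlarged, but still $\subseteq\oA$) neutrix part of the determinant cannot absorb the real part $d$. This is exactly what the chain $N(\Delta')\subseteq\overline{A'}\subseteq\oA\subseteq\oslash d$ together with the disjointness property $d\notin\oslash d$ deliver; all remaining manipulations are routine neutrix arithmetic.
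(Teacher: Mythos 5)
Your proof is correct and follows essentially the same route as the paper: both exploit that $\A'$ shares the representative matrix $P$ with $\A$, use stability of $\A$ to get $N(\Delta')\subseteq\overline{A'}\subseteq\oA\subseteq\oslash d$, conclude $\Delta'$ is zeroless (the paper via $\Delta'\subseteq(1+\oslash)d$, you via Proposition \ref{tcopt}.\ref{giao}, which is the same disjointness fact), and then bound $R(\A')$ by $\oslash$ by dividing through by $d$. The differences are purely presentational.
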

\begin{proof}
	Because $ \A $ is limited, the matrix $ \A' $ is also limited. Let $ d=\det (P)$ and $ \Delta'=\det(\A') $. Then $ d $ and $ \Delta' $ are also limited. Because $A'_{ij}\subseteq\oA $ for $ 1\leq i,j \leq n $, and the matrix $ \A $ is non-singular and stable, it holds that $ \Delta'\subseteq d+\oA\subseteq (1+\oslash)d$. So $ \Delta' $ is zeroless, hence $ \A' $ non-singular. In addition 
	
	$$ \frac{\overline{\A'}}{\Delta'} \subseteq \frac{\overline{\A}}{(1+\oslash)d}=\frac{\overline{\A}}{\Delta}\subseteq\oslash.$$
	Hence $ \A' $ is stable.
\end{proof}
\begin{lemma}\label{steqs}
	Let $ \A|\B $ be a stable system, and $ P=[a_{ij}]_{n\times n} $ be a reduced properly arranged representative matrix of $ \A $. 
	Let $ \A'\equiv [\alpha'_{ij}] $ with $ \alpha'_{ij}=a_{ij}+A'_{ij} $ and $ A'_{ij}\subseteq\oA $ for $ 1\leq i,j \leq n $. Then $ \A'|\B $ is a stable equivalent system satisfying Convention \ref{conv}.
\end{lemma}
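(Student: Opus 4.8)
The plan is to read off the three required properties — stability of the matrix $\A'$, conformity of the system $\A'|\B$ with Convention \ref{conv}, and equivalence with $\A|\B$ — from the single structural fact that $\A$ and $\A'$ share the very same representative matrix $P$, together with the preceding Lemma \ref{stns}.

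First I would invoke Lemma \ref{stns}: since $\A$ is non-singular and stable, properly arranged with respect to the reduced representative matrix $P$, and $\A'$ has the same representatives with $A'_{ij}\subseteq\oA$, the matrix $\A'$ is non-singular and stable. This gives at once Condition \ref{dn1} of Definition \ref{dn Gauss-Jordan eliminable-new} and the non-singularity demanded by Convention \ref{conv}. That $\A'$ is reduced and properly arranged with respect to $P$ is then inherited: $\A'$ carries the reduced, properly arranged representative matrix $P$, and because $\A$ is stable one has $\oA\subseteq\oslash\Delta\subseteq\oslash$, so every $A'_{ij}\subseteq\oA$ is infinitesimal and the $(1,1)$ entry has the required form $1+A'_{11}$ with $A'_{11}\subseteq\oslash$. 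Uniformity is immediate, since the right-hand member $\B$ is untouched.

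Next I would verify the two remaining conditions of stability for the system. The decisive observation is that $\A'$ has representative matrix $P$, so $\det(\A')$ has representative $d=\det(P)$, the same representative as $\Delta=\det(\A)$, and both determinants are zeroless and limited of order $d$. For Condition \ref{dn2n}, Proposition \ref{tcopt}.\ref{tcoptiv} lets me replace the zeroless denominators by $d$, giving $R(\A')=\overline{A'}/\det(\A')=\overline{A'}/d\subseteq\oA/d=\oA/\Delta=R(\A)\subseteq R(\B)$, the last inclusion being the stability of $\A|\B$. For Condition \ref{dn4n}, the same proposition together with Proposition \ref{divdeltak} yields $\det(\A')B=dB=\Delta B=B$, so $\det(\A')$ is not an absorber of $B=\underline{B}$. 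Hence $\A'|\B$ is stable and satisfies Convention \ref{conv}.

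Finally, for equivalence I would use that the Gauss-Jordan operator $\G^{P}$ is built solely from the real entries of $P$ and is therefore literally the same for $\A$ and $\A'$. Applying Theorem \ref{TheoremrepresentGausssolution} together with Theorem \ref{idltdng} to the stable, properly arranged system $\A|\B$ gives its solution as $\G^{P}(\B)$, and applying the same two theorems to $\A'|\B$ gives its solution as $\G^{P}(\B)$ as well; since both use the common $P$, these vectors coincide, so the solutions are equal and the systems are equivalent. I expect no serious obstacle: essentially everything reduces to the observation that a common representative matrix $P$ forces $\det(\A')$ to have the same order of magnitude as $\Delta$ and forces $\G^{P}$ to be shared. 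The only point needing a little care is the determinant bookkeeping of the previous paragraph, which is exactly what makes the relative-uncertainty and absorber conditions transfer from $\A|\B$ to $\A'|\B$.
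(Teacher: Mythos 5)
Your proof is correct and follows essentially the same route as the paper's: invoke Lemma \ref{stns} to get non-singularity and stability of $\A'$, observe that Convention \ref{conv} is inherited because $\A'$ shares the reduced, properly arranged representative matrix $P$ and $\B$ is unchanged, and then apply Theorem \ref{TheoremrepresentGausssolution} (with Theorem \ref{idltdng}) to both systems to obtain the common solution $\G^{P}(\B)$, whence equivalence. The only difference is that you explicitly check the system-level stability conditions \ref{dn2n} and \ref{dn4n} of Definition \ref{dn Gauss-Jordan eliminable-new} for $\A'|\B$ via the shared representative $d=\det(P)$, a step the paper's proof leaves implicit; this is a welcome filling-in of detail rather than a different approach.
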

\begin{proof}
	Because $ \overline{\A'}\subseteq\oA $, by Lemma \ref{stns} the matrix $ \A' $ is non-singular and stable. Then $ \A'|\B $  satisfies Convention \ref{conv}. By Theorem \ref{TheoremrepresentGausssolution} both systems $ \A|\B $ and $  \A'|\B $ are solved by $ \G^P(\B) $. Hence the systems are equivalent.
\end{proof}
\begin{proof}[Proof of Proposition \ref{samesol}]
	Put $ \A''=Q+(\oA)_{n\times n} $; note that the system $  \A''|\B $ satisfies Convention \ref{conv}. It follows from \eqref{difqa} that $\A''=P+(\oA)_{n\times n} $, so by Lemma \ref{stns} the matrix $ \A''$ is non-singular and stable. Then $  \A''|\B $ is a stable system, and by Lemma \ref{steqs} the systems $  \A|\B $ and $  \A''|\B $ are equivalent. The system $  \A'|\B $ shares with the system $  \A''|\B $ the representative matrix $ Q $, hence by Lemma \ref{steqs} it is stable and equivalent to $  \A''|\B $. Hence the systems $  \A|\B $ and $  \A'|\B $ are also equivalent. Then it follows from Theorem \ref{TheoremrepresentGausssolution} that $\G^{P}(\B)= \G^{Q}(\B) $. 
\end{proof}

\begin{corollary}\label{eqrepmat}
	Let $ \A|\B $ be a stable system, where $ \A=P+A $, with $ P=[a_{ij}]_{n\times n} $ is a reduced properly arranged representative matrix and $ A $ a neutricial matrix. Let $ Q=[q_{ij}]_{n\times n}\in\M_{n}(\R) $ be a reduced properly arranged representative matrix of $ \A'\equiv P + (\oA)_{n\times n}$. Then $ \A|\B $ and $ Q|\B $ are equivalent.
\end{corollary}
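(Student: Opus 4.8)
The plan is to read the corollary off as the special case of Proposition \ref{samesol} in which the perturbing neutrix parts are taken to vanish, so that the auxiliary external matrix appearing there becomes the real matrix $Q$ itself. In other words, the whole content is already carried by Proposition \ref{samesol}, and the task is to match the notation and check that $Q$, regarded as an external matrix with trivial neutrix parts, is an admissible choice.

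First I would verify that the standing hypotheses of Proposition \ref{samesol} hold for $\A|\B$. By assumption the system is stable and its representative matrix $P=[a_{ij}]_{n\times n}$ is reduced and properly arranged, so Theorem \ref{TheoremrepresentGausssolution} guarantees that the solution is $S=\G^{P}(\B)$, which is exactly the form of the original system demanded in Proposition \ref{samesol}. Next I would unpack what it means for $Q$ to be a representative matrix of $\A'\equiv P+(\oA)_{n\times n}$: each entry of this external matrix is $a_{ij}+\oA$, so a real representative $q_{ij}$ of that entry is precisely a number with $q_{ij}-a_{ij}\in\oA$ for $1\leq i,j\leq n$. Together with the hypothesis that $Q$ is reduced and properly arranged, this is verbatim the condition imposed on $Q$ in Proposition \ref{samesol}.

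Finally I would apply Proposition \ref{samesol} with the choice $A'_{ij}=\{0\}$ for all $i,j$; this is admissible because $\{0\}\subseteq\oA$ always holds. With this choice the proposition's auxiliary matrix $[\,q_{ij}+A'_{ij}\,]_{n\times n}$ collapses to the real matrix $Q$, and its conclusion---that this system is stable and equivalent to $\A|\B$ in the sense of Definition \ref{defequiv}---is exactly the assertion that $Q|\B$ is equivalent to $\A|\B$. I do not expect any substantive obstacle here; the only point requiring care is purely notational, namely keeping the corollary's external matrix $P+(\oA)_{n\times n}$ (of which $Q$ is a real representative) distinct from the proposition's auxiliary matrix, which we specialize to $Q$. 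The degenerate situation $\oA=\{0\}$ forces $Q=P$ and renders the statement trivial, so it poses no difficulty.
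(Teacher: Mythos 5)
Your proposal is correct and matches the paper's intended derivation: the corollary is exactly Proposition \ref{samesol} specialized to $A'_{ij}=\{0\}$, after observing that $Q$ being a representative matrix of $P+(\oA)_{n\times n}$ is verbatim condition \eqref{difqa}. The paper gives no separate proof precisely because this specialization is immediate, and your care in distinguishing the corollary's matrix $\A'$ from the proposition's auxiliary matrix is the only point of substance.
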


The corollary indicates that for stable systems we may neglect all terms and neutrices smaller than the biggest neutrix in the coefficient matrix, and solve instead for any real coefficient matrix lying within this range of imprecision.

We will apply Proposition \ref{samesol} and Corollary \ref{eqrepmat} in Example \ref{ex6n} and Example \ref{ex7} below. 
\begin{example}	\label{ex6n}	\rm 
	(Continuation of Example \ref{ex6}.) Put
	\begin{equation*}\label{sy6n}
		\left\{ 
		\setlength\arraycolsep{0pt}
		\begin{array}{ r >{{}}c<{{}} r >{{}}c<{{}} r  @{{}\subseteq {}} r  >{{}}c<{{}} r  >{{}}c<{{}}  r }
			\left( 1+\varepsilon^2\oslash \right) x _{1} &+ &(1+\varepsilon^2\oslash)x _{2} &+& \left( 	1+\varepsilon^2\oslash \right) x _{3}& 1+%
			\varepsilon \oslash \\ 
			\left( 1+\varepsilon^2\oslash \right) x _{1}&+&\left( -\frac{1}{2}+\varepsilon^2\oslash	\right) x _{2}&+&(-\frac{1}{2}+\varepsilon^2\oslash)x _{3}& -2+\varepsilon \oslash \\ 
			\left( \frac{1}{2}\varepsilon +\varepsilon^2\oslash\right) x _{1}&+&(\frac{1}{2}+\varepsilon^2\oslash) x _{2}&+&\left(
			1+\varepsilon^2\oslash \right) x _{3}& \varepsilon+\varepsilon \oslash %
		\end{array}.
		\right.
	\end{equation*}
	Let $ \A' $ be the coefficient matrix of \ref{ex6n}. Note that the matrix $P$ given by \eqref{repmatrixex} is a representative matrix of both $A$ and $\A'$, and that the associated neutricial matrices satisfy $A\subseteq A'$ with $ \overline{A'}=\oA $, and that the vectors in the right-hand side of both systems are the same. Then by Proposition \ref{samesol}
	the solution of the system \eqref{ex6n} is also given by \eqref{sol1}.
\end{example}

\begin{example}
	\label{ex7}%
	\rm%
	Let $\varepsilon>0 $ be infinitesimal. Consider the reduced flexible system $ \A|\B $ given by
	\begin{equation}\label{eq7}
		\left\{ 
		\setlength\arraycolsep{0pt}
		\begin{array} {lllllllllllll}
			\left( 1+\varepsilon \pounds \right) x_{1}  + (1-\eps) x_{2} +(\frac{1}{2}+2\eps^2) x
			_{3}+\frac{1}{2}x_{4}& \subseteq &-1&+&\varepsilon \pounds  \\ 
			(-1+3\eps)x_{1}+x_{2}+\left( \frac{1}{2}+\eps^2+\varepsilon^{2} \oslash \right)
			x_{3}+\frac{1}{2}x_{4}& \subseteq & & & \varepsilon \pounds  \\ 
			x_2 -\frac{1}{2}x_{3}+(1-3\eps^2+\varepsilon^{2} \oslash)x_{4}&\subseteq &-\frac{1}{2}&+&\varepsilon \pounds  \\ 
			\left( \frac{1}{2}+\eps+\varepsilon \oslash \right) x_{1}+(1+ \varepsilon\pounds)x_3 +(1+\varepsilon\oslash)x_4 &\subseteq &2  &+& \varepsilon \pounds %
		\end{array}.
		\right.
	\end{equation}	
	The  matrix
	\begin{equation}\label{Qrep}	
		P=\begin{bmatrix}
			1&1-\eps&1/2+2\eps^{2}&1/2\\
			-1+3\eps&1&1/2+\eps^{2}&1/2\\
			0&1&-1/2&1-3\eps^{2}\\
			1/2+\eps &0&1&1
		\end{bmatrix}, 
	\end{equation}
	is a representative matrix of $ \A $. One verifies that $ P $ is non-singular, reduced and properly arranged, with $\det(P)\in 3+\pounds\eps$ zeroless, $ m_{1}=1 $, $ m_{2}\in 2-2\eps+\oslash\eps $ and $ m_{3}\in 2-5/2\eps+\oslash\eps $.
	Also
	$R\left( \mathcal{A}%
	\right) =\overline{A}\diagup \Delta =\varepsilon \pounds $, $R\left( \mathcal{B}%
	\right) =B\,\diagup \overline{\beta }=\varepsilon \pounds $ and $\Delta 
	B=\varepsilon \pounds =B$. Hence $\oA=\varepsilon\pounds\subset\oslash=\oslash\Delta$, $R\left( \mathcal{A}%
	\right) \subseteq R\left( \mathcal{B}\right) $ and $\Delta $\ is not an
	absorber of $B$, so the system $ \A|\B$ stable. 	 
	
	Let \begin{equation}\label{repmat4}
		Q=\begin{bmatrix}
			1&1&1/2&1/2\\
			-1&1&1/2&1/2\\
			0&1&-1/2&1\\
			1/2&0&1&1
		\end{bmatrix}.
	\end{equation} 
	The matrix $ Q $ is reduced and non-singular, with determinant $ d\equiv \det(Q)=-3 $. A straightforward calculation shows that $Q$ is properly arranged, with	$ m_{2} =2 $ and $ m_{3} =-2 $. The entries of $ Q $ and $ P $ differ for at most a limited multiple of $ \eps $, which is contained in $\oA=\pounds \eps$.
	
	Applying the usual Gauss-Jordan procedure we derive that
	\begin{equation*}\label{sol4}
		\begin{array}{lll}
			\mathcal{X}=	\mathcal{G}^Q\B =\begin{bmatrix}
				-1/2+\eps\pounds\\-13/8+\eps\pounds\\3/4+\eps\pounds\\3/2+\eps\pounds
			\end{bmatrix}
		\end{array}	
	\end{equation*}	 	
	is  the Gauss-Jordan solution of the system. By Corollary \ref{eqrepmat} it is also the solution of \eqref{eq7}.
\end{example}

We illustrate Example \ref{ex6}/\ref{ex6n} and Example \ref{ex7} numerically. We  assume that $\varepsilon =0.01$, and represent $\oslash $ by $[-0.1,0.1]$ and $\pounds $ by the interval $[-2,2]$. We will not do an exhaustive  investigation, and instead of applying interval calculus  we choose the extreme values of the numerical intervals somewhat at random.

Working with the matrix \eqref{repmatrixex} and the right-hand member $ (1,-2,1/100)^{T} $, we find the exact solution 
\begin{equation*}
	x=\left[ \begin{array}{c}
		x_{1} \\ 
		x_{2} \\ 
		x_{x}%
	\end{array}\right] 
	=
	\left[ \begin{array}{c}
		-1 \\ 
		\frac{397}{100} \\ 
		\frac{197}{100}
	\end{array}\right] %
	=%
	\left[ \begin{array}{c}
		-1 \\ 
		3.97 \\ 
		-1.97%
	\end{array}\right] .%
\end{equation*}

To represent the coefficient matrix of Example \ref{ex6}, we may consider, say, 

\begin{equation*}
	\mathcal{A}^{\prime }=%
	\left[ \begin{array}{ccc}
		1.00001 & 0.99999 & 1.000002 \\ 
		0.999998 & -0.50001 & -0.5 \\ 
		0.00499999 & 0.5 & 1.00001%
	\end{array}\right].
\end{equation*}
Rounded off at $7$ significative digits, we find the the solution%
\[
x^{\prime }=\left[
\begin{array}{r}
	-0.999970\\ 
	3.969929 \\ 
	-1.969945%
\end{array}\right]%
.
\]
The largest deviation with respect to the exact solution is about $0.000071$
in the second coordinate, which is significantly smaller than $0.001$, i.e. the absolute value of the bounds of the interval representing $\oslash \varepsilon $.

In Example \ref{ex6n} all entries of the coefficient matrix are imprecise.  In order to compare with the numerical matrix $ \mathcal{A}^{\prime } $, we choose a matrix $ \mathcal{A}^{\prime \prime  }$ using a randomization which is the same
for the imprecise coefficients of $ \mathcal{A}^{\prime }$ and put
\[
\mathcal{A}''=%
\left[\begin{array}{ccc}
	1.00001 & 0.99999 & 1.00001 \\ 
	0.99999 & -0.50001 & -0.49999 \\ 
	0.004999 & 0.49999 & 1.00001%
\end{array}\right].
\]%
Rounded off at $7$ significative digits, we find the the solution 
$$x^{\prime \prime }=
\left[ \begin{array}{r}
	-0.999943\\ 
	3.969928 \\ 
	-1.969915
\end{array}\right]. %
$$%
As expected, the result is not as good as $ x' $, still the largest
deviation of about $ 0.000085 $ for the third coordinate lies well within the interval $[-0.001,0.001]$ representing $\oslash \varepsilon $.

Finally we illustrate Corollary \ref{eqrepmat} by comparing the solution of the system \eqref{eq7} when using the representative matrices $ P $ given by \eqref{Qrep} and $ Q $ given by \eqref{repmat4}.

The solution $ x'$ for the matrix 
\begin{equation*}
	P'=\begin{bmatrix}
		1&0.99&0.5002&0.5\\
		-0.97&1&0.5001&0.5\\
		0&1&-0.5&0.9997\\
		0.51 &0&1&1
	\end{bmatrix}
\end{equation*} 
is, rounding off at $ 7 $ significative digits,
\begin{equation*}
	\begin{array}{lll}
		x'=	\mathcal{G}^{P'}b =
		\left[ \begin{array}{r}
			-0.5159373\\-1.632099\\0.7537178\\1.509410
		\end{array}\right].
	\end{array}
\end{equation*} 
For the matrix $ Q $ and the right-hand member $ b^{T}= (-1,0, -1/2,2)^{T}$ we find the exact solution
\begin{equation*}
	\begin{array}{lll}
		x=	\mathcal{G}^Qb =\begin{bmatrix}
			-1/2\\-13/8\\3/4\\3/2
		\end{bmatrix}=
		\begin{bmatrix}
			-0.5\\-1.625\\0.75\\1.5
		\end{bmatrix}
	\end{array}.
\end{equation*}

We observe the largest deviation between $ x' $ and $ x $ in the second coordinate, with a value of about $0.007$. This is $ 0.7 $ times the value $ 0.01 $ chosen for $ \varepsilon $, so it can be considered to lie within $ \pounds \varepsilon $.

\end{document}